
\documentclass[final,leqno,oneeqnum,onethmnum]{siamltex704}
\usepackage{graphicx,epsfig}
\usepackage{amssymb,color}
\usepackage{amsmath}
\usepackage{exscale}
\setlength{\parskip}{1\parskip}

\setcounter{MaxMatrixCols}{10}

\def\gph {\mathop{\rm gph\,  }}

\newtheorem{theo}{Theorem}
\newtheorem{lem}[theo]{Lemma}
\newtheorem{prop}[theo]{Proposition}
\newtheorem{cor}[theo]{Corollary}
\newtheorem{rem}{Remark}
\newtheorem{exa}{Example}
\newtheorem{definiti}{Definition}

\usepackage[normalem]{ulem}
\usepackage[nocompress]{cite}

\title{Calmness modulus of linear semi-infinite programs \thanks{%
This research has been partially supported by Grants MTM2011-29064-C03
(02-03) from MINECO, Spain ACOMP/2013/062 from Generalitat Valenciana, Spain,   Grant  C10E08  from  ECOS-SUD, and Grant DP110102011 from the Australian
Research Council. }}
\author{M.J. C\'{a}novas\thanks{%
Operations Research Center, Miguel Hern\'{a}ndez University of Elche, 03202
Elche (Alicante), Spain (canovas@umh.es).}
\and
A.Y. Kruger\thanks{%
Centre for Informatics and Applied Optimization, School of Science,
Information Technology and Engineering, University of Ballarat, POB 663,
Ballarat, Vic, 3350, Australia (a.kruger@ballarat.edu.au).}\
\and M.A. L\'{o}pez\thanks{%
Department of Statistics and Operations Research, University of Alicante, Spain and Adjunct professor, University of Ballarat
03071 Alicante, Spain (marco.antonio@ua.es).}
\and J. Parra\thanks{Operations Research Center, Miguel Hern\'{a}ndez University of Elche, 03202
Elche (Alicante), Spain  (parra@umh.es).}
 \and M.A. Th\'{e}ra\thanks{%
Laboratoire XLIM, UMR-CNRS 6172, University of Limoges, France and Adjunct professor, University of Ballarat
(michel.thera@unilim.fr). }}

\begin{document}
\maketitle

\begin{abstract}
Our main goal is to compute or estimate the calmness modulus of the argmin
mapping of linear semi-infinite optimization problems under canonical
perturbations, i. \hspace{-0.15cm}e., perturbations of the objective
function together with continuous perturbations of the right-hand-side of
the constraint system (with respect to an index ranging in a compact
Hausdorff space). Specifically, we provide a lower bound on the calmness
modulus for semi-infinite programs with unique optimal solution which turns
out to be the exact modulus when the problem is finitely constrained. The
relationship between the calmness of the argmin mapping and the same
property for the (sub)level set mapping (with respect to the objective
function), for semi-infinite programs and without requiring the uniqueness
of the nominal solution, is explored too, providing an upper bound on the
calmness modulus of the argmin mapping. When confined to finitely
constrained problems, we also provide a computable upper bound as
it only relies on the nominal data and parameters, not involving elements in
a neighborhood. Illustrative examples are provided.
\end{abstract}
\begin{keywords}
Isolated calmness, calmness modulus,
variational analysis, linear programming, semi-infinite programming.\
\end{keywords}
\begin{AMS}
 90C05, 90C34, 90C31,
49J53, 49J40.
\end{AMS}

\pagestyle{myheadings}
\thispagestyle{plain}

\markboth{M.J. C\'{a}novas, A.Y. Kruger, M.A. L\'{o}pez, J. Parra, M.A. Th\'era}{Calmness modulus of linear semi-infinite programs}

\section{Introduction}

We are concerned with the linear optimization problem

\begin{equation}
\begin{array}{rll}
P\left( c,b\right) : & \text{minimize } & \left\langle c,x\right\rangle  \\
& \text{subject to } & \left\langle a_{t},x\right\rangle \leq
b_{t},\,\,\,t\in T,%
\end{array}
\label{P}
\end{equation}%
where $c,x\in \mathbb{R}^{p}$ (regarded as column-vectors in matrix
calculus), $\left\langle \cdot ,\cdot \right\rangle $ denotes the usual
inner product in $\mathbb{R}^{p},T$ is a compact Hausdorff space and the
functions $t\mapsto a_{t}\in \mathbb{R}^{p}$ and $t\mapsto b_{t}\in \mathbb{R%
}$ are continuous on $T.$ {\ In the sequel we will use indifferently the
notation $\langle x,y\rangle $ or $x^{\prime }y$ to denote the scalar
product between $x$ and $y$, where in the last formula $x^{\prime }$ stands
for the transpose of $x$.} We assume that $t\mapsto a_{t}$ is a given
function and that perturbations fall on a parameter $\left( c,b\right) \in
\mathbb{R}^{p}\times C\left( T,\mathbb{R}\right) ,$ with $b\equiv \left(
b_{t}\right) _{t\in T}.$ The parameter space $\mathbb{R}^{p}\times C\left( T,%
\mathbb{R}\right) $ is endowed with the uniform convergence topology through
the norm
\begin{equation}
\left\Vert \left( c,b\right) \right\Vert :=\max \left\{ \left\Vert
c\right\Vert _{\ast },\left\Vert b\right\Vert _{\infty }\right\} ,
\label{dist cb}
\end{equation}%
where $\mathbb{R}^{p}$ is equipped with an arbitrary norm, $\left\Vert \cdot
\right\Vert $, $\left\Vert \cdot \right\Vert _{\ast }$ denotes the
associated dual norm, given by $\left\Vert u\right\Vert _{\ast
}=\max_{\left\Vert x\right\Vert \leq 1}\left\langle u,x\right\rangle ,$ and $%
\left\Vert b\right\Vert _{\infty }:=\max_{t\in T}\left\vert b_{t}\right\vert
.$ Throughout the paper $d_{\ast }$ stands for the distance associated with $%
\left\Vert \cdot \right\Vert _{\ast }.$ Our aim here is to analyze the \emph{%
solution} \emph{mapping} (also called \emph{argmin mapping}) of problem (\ref%
{P}):
\begin{equation*}
\mathcal{S}:\left( c,b\right) \mapsto \left\{ x\in \mathbb{R}^{p}\mid x\text{
solves (\ref{P}) for }\left( c,b\right) \right\} ,\text{ with }\left(
c,b\right) \in \mathbb{R}^{p}\times C\left( T,\mathbb{R}\right) .
\end{equation*}%
When $c$ is fixed, we deal with the \emph{partial} solution mapping $%
\mathcal{S}_{c}:C\left( T,\mathbb{R}\right) \rightrightarrows \mathbb{R}^{p}$
defined as
\begin{equation}
\mathcal{S}_{c}\left( b\right) :=\mathcal{S}\left( c,b\right) .  \label{Sc}
\end{equation}
\begin{definiti}
\emph{(calmness).} A mapping $S$ acting from a metric space $\left(
Y,d_{Y}\right) $ to a metric space $\left( X,d_{X}\right) $ is said to be
\emph{calm} at $\left( \bar{y},\bar{x}\right) \in \mathrm{gph}S$
(the graph of $S$) if there exist a constant $\kappa \geq 0$ and
neighborhoods $U$ of $\bar{x}$ and $V$ of $\bar{y}$ such that
\begin{equation}
d_{X}\left( x,S\left( \bar{y}\right) \right) \leq \kappa d_{Y}\left( y,%
\bar{y}\right) \text{ whenever }x\in S\left( y\right) \cap U\text{ and }%
y\in V.  \label{def_calmness}
\end{equation}
\end{definiti}
Recall that calmness of $S$ turns out
to be equivalent to  \emph{metric subregularity of }$S^{-1}$ (see \cite[Theorem 3H.3 and Exercise 3H.4]{DR09}) 
 which reads in terms of the existence of a
(possibly smaller) neighborhood $U$ of $\bar{x}$ and $\kappa \geq 0$
such that 
\begin{equation}
d_X\left( x,S\left( \bar{y}\right) \right) \leq \kappa d_Y\left( \bar{y%
},S^{-1}\left( x\right) \right) ,\text{ for all }x\in U,  \label{eq_msubreg}
\end{equation}%
where $S^{-1}\left( x\right) :=\left\{ y\in Y\mid x\in S\left( y\right)
\right\} .$
The variant of the previous definition when the point-to-set distance $%
d_{X}\left( x,S\left( \bar{y}\right) \right) $ is replaced with $%
d_{X}\left( x,\bar{x}\right) $ in (\ref{def_calmness}) corresponds to
the so-called \emph{isolated calmness} property. Isolated calmness of $S$ at
$\left( \bar{y},\bar{x}\right) \in \mathrm{gph}S$ implies that $%
S\left( \bar{y}\right) \cap U=\{\bar{x}\},$ so $\bar{x}$ is
an isolated point in $S\left( \bar{y}\right) ,$ hence the terminology.
In fact, the reader can observe that isolated calmness (also called \emph{%
calmness on selections}, see e.g. \cite{HeJouOu02}, or \emph{local upper
Lipschitz }property, see e.g. \cite{KlaKu02}) is nothing else but standard
calmness together with this isolatedness condition (i. e., $S\left(
\bar{y}\right) \cap U=\{\bar{x}\}$ for some neighborhood $U$ of $%
\bar{x}$). Observe that in this case $S\left( y\right) \cap U$ needs
not to be a singleton and might be empty for $y\in V\backslash \{\bar{y}%
\}$. In the case of our convex-valued argmin mapping $\mathcal{S},$ an
isolated solution is nothing else but a unique solution. Sometimes along the
paper we will require this uniqueness assumption of the nominal problem $%
P\left( \bar{c},{\bar{b}}\right) .$
Any linear mapping $A:\mathbb{R}^{m}\rightarrow \mathbb{R}^{n}$ is
isolatedly calm at any point while isolated calmness of {\ the inverse
mapping } $A^{-1}$ is equivalent to injectivity of $A$, that is, $%
A^{-1}(0)=\{0\}.$ More generally, from a result by Robinson \cite{Robinson}
it follows that a set-valued mapping $S:\mathbb{R}^{m}\rightrightarrows
\mathbb{R}^{n}$ whose graph is the union of finitely many polyhedral convex
sets is isolatedly calm {\ at $(\bar{y},\bar{x})$} if and only if $%
\bar{x}$ is an isolated point of $S\left( \bar{y}\right) .$ For a
function $f:\mathbb{R}^{m}\rightarrow \mathbb{R}^{n}$ which is smooth in a
neighborhood of $\bar{x}$, the inverse $f^{-1}$ is isolatedly calm at {%
\ $(f(\bar{x}),\bar{x})$} if and only if the derivative mapping $Df(%
\bar{x})$ is injective. As shown in \cite{Dontchev95}, see also \cite%
{ample}, this inverse-function-type result can be extended to mappings of
the form $f+F$ where $f$ is a smooth function and $F$ is a set-valued
mapping with closed values. In \cite{ample} it was shown that, for a problem
of minimizing a convex function with linear perturbations over a polyhedral
convex set, the property of isolated calmness of the solution set is \emph{%
equivalent} to the standard second-order sufficient optimality condition. In
our semi-infinite framework, the set of feasible solutions is generally
\emph{not} polyhedral.

Calmness is known to be a{\ weakened version of the more robust \emph{Aubin
property}}: We say that $S$ is \emph{Aubin continuous} (or {\ pseudo
Lipschitz}, or enjoys the Aubin property) at $\left( \bar{y},\bar{x%
}\right) \in \mathrm{gph}S$ if there exist $\kappa \geq 0$ and neighborhoods
$V$ of $\bar{y}\ $and $U$ of $\bar{x}$ such that
\begin{equation}
d_{X}\left( x,S\left( y^{\prime }\right) \right) \leq \kappa d_{Y}\left(
y,y^{\prime }\right) \text{ whenever }y,y^{\prime }\in V\text{ and }x\in
S\left( y\right) \cap U.  \label{Aubin}
\end{equation}%
{\ Calmness} corresponds to the case when we fix $y^{\prime }=\bar{y}$
in (\ref{Aubin}).

The infimum of the values of $\kappa $ for which (\ref{Aubin}) holds (for
 some associated neighborhoods $U$ and $V$) is called the \emph{Lipschitz
modulus} of $S$ at $(\bar{y},\bar{x})$, and denoted by $\mathrm{lip%
}S\left( \bar{y},\bar{x}\right) .$ Concerning calmness, the
infimum of the values of $\kappa $ for which (\ref{def_calmness}) holds (for
some associated neighborhoods $U$ and $V$) is called the \emph{calmness
modulus} of $S$ at $(\bar{y},\bar{x})$, and denoted by $\mathrm{clm%
}S\left( \bar{y},\bar{x}\right) .$ As a direct consequence of the
definitions we have
\begin{eqnarray}
\mathrm{lip}S\left( \bar{y},\bar{x}\right)  &=&\limsup_{\substack{ %
y,y^{\prime }\rightarrow \bar{y},~y\neq y^{\prime } \\ x\rightarrow \bar{x}%
,~x\in S\left( y\right) }}\frac{d_{X}\left( x,S\left( y^{\prime }\right)
\right) }{d_{Y}\left( y,y^{\prime }\right) },  \label{formulae lip} \\
\mathrm{clm}S\left( \bar{y},\bar{x}\right)  &=&\limsup_{\substack{ %
y\rightarrow \bar{y} \\ x\rightarrow \bar{x},~x\in S\left( y\right) }}\frac{%
d_{X}\left( x,S\left( \bar{y}\right) \right) }{d_{Y}\left( y,\bar{y}\right) }%
.  \label{formulae clm}
\end{eqnarray}%
Accordingly, we always have $\mathrm{clm}S\left( \bar{y},\bar{x}%
\right) \leq \mathrm{lip}S\left( \bar{y},\bar{x}\right) .$
It
is well-known that $\mathrm{clm}S\left( \bar{y},\bar{x}\right) $
coincides with the \emph{modulus of metric subregularity of }$S^{-1}$ at $%
\left( \bar{x},\bar{y}\right) $; i.e., with the infimum of the
values of $\kappa $ for which (\ref{eq_msubreg}) holds (for   associated
neighborhoods $U$). So, we can write 
$$
\mathrm{clm}S\left( \bar{y},\bar{x}\right) =\limsup_{x\rightarrow 
\bar{x}}\frac{d_{X}\left( x,S\left( \bar{y}\right) \right) }{%
d_{Y}\left( \bar{y},S^{-1}\left( x\right) \right) }.
\label{eq_modulus_subreg}
$$
The main goal of this paper consists in computing or estimating $\mathrm{clm}\mathcal{S}\left( \left( {\bar{c}},{\bar{b}}\right) ,%
\bar{x}\right) ,$ assuming sometimes that $\mathcal{S}\left( {\bar{c}}%
,{\bar{b}}\right) =\left\{ \bar{x}\right\} $ and the Slater
constraint qualification holds at $P\left( {\bar{c}},{\bar{b}}\right) $
(see {\ Section} 2). As emphasized in \cite{HeJouOu02}, the calmness
property plays a key role in optimization theory, specifically in the study
of optimality conditions, sensitivity analysis, stability of solutions or
existence of error bounds. The calmness modulus in the context of constraint
systems under right-hand side perturbations has been widely analyzed in the
literature (the reader is addressed to \cite{KNT10} for more details). In
particular, \cite[Theorem 1]{KNT10} constitutes a starting point for the
present paper, as far as it provides a formula for the calmness modulus of
constraint systems in terms of the associated supremum function.

Specifically, the structure of the paper is as follows: {\ Section} 2
introduces the necessary notation and background. {\ Section} 3 provides a
lower bound on the calmness modulus of $\mathcal{S}$ for semi-infinite
programs with unique optimal solutions. {\ Section} 4 shows that this lower
bound equals the exact modulus when $T$ is finite. The uniqueness of optimal
solution is not required in {\ Section} 4 for establishing the upper
estimate. {\ Section} 5 is concerned with upper bounds on $\mathrm{clm}%
\mathcal{S}\left( \left( {\bar{c}},{\bar{b}}\right) ,\bar{x}\right)
.$ The first part is devoted to explore the relationship between the moduli
of $\mathcal{S}$ and the (sub)level set mapping $\mathcal{L}$ introduced in
\cite{CHPT12} for characterizing the calmness of $\mathcal{S}$ at $\left(
\left( {\bar{c}},{\bar{b}}\right) ,\bar{x}\right) $ (see {\ Section}
2). The second part of {\ Section} 5 is confined to finitely constrained
problems under uniqueness of optimal solution and obtains a new upper bound
which may be easily computable as it is formulated exclusively in terms of
the nominal data ${\bar{c}}$, ${\bar{b}}$, and $\bar{x}$.  Section 6
 provides examples showing that the latter upper bound may not
be attained and also illustrating how the expression for the exact modulus
(for finitely constrained problems with unique optimal solutions) given in {%
\ Section} 4 works. Finally in Section 7 we give some concluding remarks.

\section{Preliminaries}

In this section we introduce some notation and preliminary results. Given $%
X\subset \mathbb{R}^{k},$ $k\in \mathbb{N},$ we denote by \textrm{int}$X$,
\textrm{co}$X,$ and \textrm{cone}$X$ the \textit{\ interior}, the \emph{%
convex hull}, and the \emph{conical convex hull }of $X$, respectively. It is
assumed that \textrm{cone}$X$ always contains the zero-vector {$0$}$_{k}$,
in particular \textrm{cone}$(\varnothing )=\{${$0$}$_{k}\}.$ Associated with
the parameterized problem (\ref{P}), we denote by $\mathcal{F}$ the feasible
set mapping, which is given by
\begin{equation*}
\mathcal{F}\left( b\right) :=\left\{ x\in \mathbb{R}^{p}\mid \text{%
\thinspace }\left\langle a_{t},x\right\rangle \leq b_{t},\,\,\,t\in
T\right\} .
\end{equation*}%
We also {remind that the \emph{set of active indices} and the \emph{active
cone} at $x\in \mathcal{F}\left( b\right) $ are the sets $T_{b}\left(
x\right) $ and $A_{b}\left( x\right) $ defined respectively by}
\begin{equation*}
T_{b}\left( x\right) :=\left\{ t\in T\mid \text{\thinspace }\left\langle
a_{t},x\right\rangle =b_{t}\right\} \text{ and }
\end{equation*}%
\begin{equation*}
A_{b}\left( x\right) :=\mathrm{cone}\left\{ a_{t}\mid t\in T_{b}\left(
x\right) \right\} .\medskip
\end{equation*}

Recall that the \emph{Slater constraint qualification }(hereafter called
\emph{Slater condition}) holds {\ for} the problem $P\left( c,b\right) $ if
there exists $\widehat{x}\in \mathbb{R}^{p}$ such that $a_{t}^{\prime }%
\widehat{x}<b_{t}$ for all $t\in T.$ The following result can be traced out
from \cite{libro}:

\begin{prop}
\label{Prop_CLFM} Let $\left( {\bar{c}},{\bar{b}}\right) \in \mathbb{R}%
^{p}\times C\left( T,\mathbb{R}\right) $ and assume that $P\left( {\bar{c}},{%
\bar{b}}\right) $ satisfies the Slater condition. Then $\bar{x}\in
\mathcal{S}\left( {\bar{c}},{\bar{b}}\right) $ if and only if the \emph{%
Karush-Kuhn-Tucker (KKT) }condition holds, i. e.,
\begin{equation}
\bar{x}\in \mathcal{F}\left( {\bar{b}}\right) \text{ and \ }-{\bar{c}}%
\in A_{{\bar{b}}}\left( \bar{x}\right) {\normalsize .}  \label{KKTcond}
\end{equation}
\end{prop}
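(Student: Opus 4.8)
The plan is to prove the two implications separately, using convexity throughout, and to reduce the semi-infinite situation to a finite one via a compactness argument. For the "if" direction, suppose the KKT condition \eqref{KKTcond} holds, so that $-\bar c = \sum_{i=1}^{k}\lambda_i a_{t_i}$ with $\lambda_i \ge 0$ and $t_i \in T_{\bar b}(\bar x)$. Then for any $x \in \mathcal{F}(\bar b)$ I would write
\begin{equation*}
\langle \bar c, x - \bar x\rangle = -\sum_{i=1}^{k}\lambda_i \langle a_{t_i}, x - \bar x\rangle = \sum_{i=1}^{k}\lambda_i\bigl(\langle a_{t_i},\bar x\rangle - \langle a_{t_i}, x\rangle\bigr) = \sum_{i=1}^{k}\lambda_i\bigl({\bar b}_{t_i} - \langle a_{t_i}, x\rangle\bigr) \ge 0,
\end{equation*}
since each $t_i$ is active at $\bar x$ and $x$ is feasible. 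Hence $\langle \bar c, x\rangle \ge \langle \bar c, \bar x\rangle$ for all feasible $x$, i.e. $\bar x \in \mathcal{S}(\bar c, \bar b)$. This direction does not use the Slater condition at all.

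For the "only if" direction, assume $\bar x \in \mathcal{S}(\bar c, \bar b)$. The first-order condition for the convex program says that $\bar x$ minimizes $\langle \bar c, \cdot\rangle$ over $\mathcal{F}(\bar b)$ iff $-\bar c$ lies in the normal cone to $\mathcal{F}(\bar b)$ at $\bar x$. The crux is therefore to identify this normal cone with the active cone $A_{\bar b}(\bar x) = \mathrm{cone}\{a_t : t \in T_{\bar b}(\bar x)\}$, and it is precisely here that the Slater condition enters, guaranteeing the required constraint qualification. I would invoke the standard result for linear semi-infinite systems (the reference \cite{libro}): under the Slater condition the system $\{\langle a_t, x\rangle \le b_t,\ t \in T\}$ satisfies the Farkas--Minkowski / locally Farkas--Minkowski property, so the normal cone to $\mathcal{F}(\bar b)$ at $\bar x$ equals the closed convex conical hull of the active gradients, and moreover — because $T_{\bar b}(\bar x)$ is compact (it is closed in the compact $T$, by continuity of $t\mapsto a_t$ and $t \mapsto b_t$) and the $a_t$ are bounded — this cone is already closed, so no closure operation is needed and $N_{\mathcal{F}(\bar b)}(\bar x) = A_{\bar b}(\bar x)$. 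Combining, $-\bar c \in A_{\bar b}(\bar x)$, which is \eqref{KKTcond}.

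The main obstacle is the closedness of the active cone $A_{\bar b}(\bar x)$: in the semi-infinite setting $\mathrm{cone}$ of an infinite set need not be closed, and without this one would only obtain $-\bar c$ in its closure. The way around it is the compactness of $T_{\bar b}(\bar x)$ together with the fact that $\{a_t : t \in T_{\bar b}(\bar x)\}$ is a compact subset of $\mathbb{R}^p$; a standard lemma then gives that the conical convex hull of a compact set not containing the origin in its convex hull — or, more robustly, Carathéodory's theorem applied to the convex hull plus a compactness passage to the limit on the at most $p$ coefficients and indices — is closed. Since this is exactly the content extracted from \cite{libro}, I would cite it rather than reprove it, and the proposition follows. $\square$
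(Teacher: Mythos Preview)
Your argument is correct and follows the standard route: the ``if'' direction is immediate from the multiplier representation, and the ``only if'' direction reduces to identifying the normal cone to $\mathcal{F}(\bar b)$ at $\bar x$ with $A_{\bar b}(\bar x)$, which holds under Slater because (as you note) Slater forces $0\notin\mathrm{co}\{a_t:t\in T_{\bar b}(\bar x)\}$ and hence the cone generated by this compact set is closed. Note that the paper does not actually prove this proposition; it is stated as a known fact traced from \cite{libro}, so there is no ``paper's own proof'' to compare against --- your sketch simply supplies what the paper omits.
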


In the next result we appeal to the concept of strong uniqueness of
minimizers. We say that $\bar{x}\in \mathbb{R}^{p}$ is the \emph{%
strongly\ unique minimizer }of $P\left( {\bar{c}},{\bar{b}}\right) $ if for
some $\alpha >0$ we have
\begin{equation}
\left\langle {\bar{c}},x\right\rangle \geq \left\langle {\bar{c}},\bar{x}%
\right\rangle +\alpha \left\Vert x-\bar{x}\right\Vert \text{ for all }%
x\in \mathcal{F}\left( {\bar{b}}\right) .  \label{ShM}
\end{equation}

\begin{theo}
\emph{\cite[Thm. 3]{CDLP09}}\label{th charact_linear} Consider the
parameterized linear optimization problem \emph{(\ref{P})} and let $\mathcal{%
S}$ be the associated solution mapping. Let $\left( \left( {\bar{c}},{\bar{b}%
}\right) ,\bar{x}\right) \in \mathrm{gph}\mathcal{S}$ be such that $%
P\left( {\bar{c}},{\bar{b}}\right) $ satisfies the Slater{\ condition}. Then
the following are equivalent:

\emph{(i) }$-{\bar{c}}\in $\textrm{int}\thinspace $A_{{\bar{b}}}\left(
\bar{x}\right) ;$

\emph{(ii) }$\bar{x}$\emph{\ }is the strongly unique minimizer of $%
P\left( {\bar{c}},{\bar{b}}\right) ;$

\emph{(iii) }$\mathcal{S}$ is isolatedly calm at $\left( \left( {\bar c},{%
\bar b}\right) ,\bar{x}\right) ;$

\emph{(iv) }$\mathcal{S}_{{\bar c}}$ \emph{(}defined in \emph{(}$\emph{\ref%
{Sc})})$ is isolatedly calm at $\left( {\bar b},\bar{x}\right) ; $

If $T$ is finite, we can add the following condition:

\emph{(v) }$\bar{x}$\emph{\ }is the unique solution of $P( {\bar c},{%
\bar b}).$
\end{theo}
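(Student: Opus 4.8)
The plan is to establish the cycle $\mathrm{(i)}\Rightarrow\mathrm{(ii)}\Rightarrow\mathrm{(iii)}\Rightarrow\mathrm{(iv)}\Rightarrow\mathrm{(i)}$ and, when $T$ is finite, to adjoin $\mathrm{(iii)}\Rightarrow\mathrm{(v)}\Rightarrow\mathrm{(i)}$. The implication $\mathrm{(i)}\Rightarrow\mathrm{(ii)}$ is pure convex analysis: if $-\bar c\in\mathrm{int}\,A_{\bar b}(\bar x)$, pick $\rho>0$ with the closed dual ball $B_\ast(-\bar c,\rho)\subseteq A_{\bar b}(\bar x)$; given $x\in\mathcal F(\bar b)\setminus\{\bar x\}$, choose $u$ with $\|u\|_\ast=1$ and $\langle u,x-\bar x\rangle=\|x-\bar x\|$, write $-\bar c+\rho u=\sum_i\lambda_i a_{t_i}$ by Carath\'eodory ($\lambda_i\ge0$, $t_i\in T_{\bar b}(\bar x)$), and pair with $x-\bar x$; since $\langle a_{t_i},x-\bar x\rangle=\langle a_{t_i},x\rangle-b_{t_i}\le0$, this yields $\langle\bar c,x-\bar x\rangle\ge\rho\|x-\bar x\|$, that is, (\ref{ShM}) with $\alpha=\rho$.

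For $\mathrm{(ii)}\Rightarrow\mathrm{(iii)}$ I would combine the strong minimum (\ref{ShM}) with the fact that, under the Slater condition, $\mathcal F$ has the Aubin property at $(\bar b,\bar x)$ (classical for linear semi-infinite systems, see \cite{libro}): for $(c,b)$ near $(\bar c,\bar b)$ and $x\in\mathcal S(c,b)$ near $\bar x$, let $\tilde x$ be a nearest point of $\mathcal F(\bar b)$ to $x$ and $x'\in\mathcal F(b)$ a point near $\bar x$; then $\|x-\tilde x\|$ and $\|x'-\bar x\|$ are $O(\|b-\bar b\|_\infty)$, and comparing $\langle c,x\rangle\le\langle c,x'\rangle$ with (\ref{ShM}) at $\tilde x$ (after absorbing the objective perturbation) gives $\alpha\|\tilde x-\bar x\|\le C\|(c,b)-(\bar c,\bar b)\|$, hence a calmness estimate for $\mathcal S$; since (\ref{ShM}) already forces $\mathcal S(\bar c,\bar b)=\{\bar x\}$, this is isolated calmness. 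Then $\mathrm{(iii)}\Rightarrow\mathrm{(iv)}$ is immediate upon fixing $c=\bar c$.

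The heart of the proof is $\mathrm{(iv)}\Rightarrow\mathrm{(i)}$. Assume $\mathrm{(i)}$ fails. By \ref{Prop_CLFM}, $-\bar c\in A_{\bar b}(\bar x)$ (and if $T_{\bar b}(\bar x)=\varnothing$ then $\bar c=0$ and a whole Slater ball is optimal, contradicting that $\mathrm{(iv)}$ makes $\bar x$ the unique point of the convex set $\mathcal S(\bar c,\bar b)$); so $T_{\bar b}(\bar x)\ne\varnothing$, whence $0\notin\mathrm{co}\{a_t:t\in T_{\bar b}(\bar x)\}$ by Slater and $A_{\bar b}(\bar x)$ is a closed convex cone with $-\bar c$ on its boundary. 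Take a supporting direction $d$, $\|d\|=1$, with $\langle a_t,d\rangle\le0$ on $T_{\bar b}(\bar x)$ and $\langle\bar c,d\rangle=0$. For $\mu>0$ set $b^\mu_t:=\max\{\bar b_t,\langle a_t,\bar x+\mu d\rangle\}$: then $\bar x+\mu d\in\mathcal F(b^\mu)$ by construction and $b^\mu_t=\bar b_t$ at every active $t$, so testing $-\bar c=\sum_i\lambda_i a_{t_i}$ against any $x\in\mathcal F(b^\mu)$ shows $\langle\bar c,x\rangle\ge\langle\bar c,\bar x\rangle=\langle\bar c,\bar x+\mu d\rangle$, i.e.\ $\bar x+\mu d\in\mathcal S_{\bar c}(b^\mu)$. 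The decisive estimate is $\|b^\mu-\bar b\|_\infty=o(\mu)$ as $\mu\downarrow0$: only indices with $\langle a_t,d\rangle>0$ and slack $\bar b_t-\langle a_t,\bar x\rangle<\mu\langle a_t,d\rangle$ contribute, and for any $\tau_0>0$ the compact set $\{t:\langle a_t,d\rangle\ge\tau_0\}$ is disjoint from the active set, hence has slack $\ge\sigma_0>0$, so for $\mu$ small all contributing indices satisfy $\langle a_t,d\rangle<\tau_0$ and $\|b^\mu-\bar b\|_\infty\le\mu\tau_0$. (If $\langle a_t,d\rangle\le0$ for all $t\in T$, then $d$ is a recession direction of $\mathcal F(\bar b)$ and $\bar x+[0,\infty)d\subseteq\mathcal S(\bar c,\bar b)$, again contradicting uniqueness.) Since $\|\mu d\|=\mu$ while $\|b^\mu-\bar b\|_\infty=o(\mu)$ and $(b^\mu,\bar x+\mu d)\to(\bar b,\bar x)$, the ratio defining $\mathrm{clm}\,\mathcal S_{\bar c}(\bar b,\bar x)$ blows up, contradicting $\mathrm{(iv)}$.

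Finally, for $T$ finite, $\mathrm{(iii)}\Rightarrow\mathrm{(v)}$ is trivial (isolated calmness makes $\bar x$ the only point of the convex set $\mathcal S(\bar c,\bar b)$), and $\mathrm{(v)}\Rightarrow\mathrm{(i)}$ follows the same contradiction scheme: a supporting direction $d$ with $\langle a_t,d\rangle\le0$ on $T_{\bar b}(\bar x)$ and $\langle\bar c,d\rangle=0$ now yields, thanks to the \emph{finitely many} inactive constraints having a common positive slack, a whole segment $\bar x+[0,s_0]d\subseteq\mathcal S(\bar c,\bar b)$, contradicting uniqueness. I expect the genuine obstacle to be the estimate $\|b^\mu-\bar b\|_\infty=o(\mu)$ in $\mathrm{(iv)}\Rightarrow\mathrm{(i)}$: separating the active indices from those pointing "sufficiently outward" along $d$ is exactly where compactness of $T$ and continuity of $t\mapsto(a_t,b_t)$ are essential, and this is precisely the feature absent when $T$ is finite, which explains why $\mathrm{(v)}$ joins the list only in that case; a subsidiary point is the closedness of $A_{\bar b}(\bar x)$ underpinning the supporting-hyperplane step, which again rests on Slater.
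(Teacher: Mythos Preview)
The paper does not prove this theorem: it is quoted verbatim from \cite{CDLP09} (their Theorem~3), and the surrounding Remark~\ref{Rem chain} only records that the convex-case analogue is developed there. So there is no ``paper's own proof'' to compare against; what matters is whether your argument stands on its own.

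It does. Your cycle $\mathrm{(i)}\Rightarrow\mathrm{(ii)}\Rightarrow\mathrm{(iii)}\Rightarrow\mathrm{(iv)}\Rightarrow\mathrm{(i)}$ is sound. The duality argument for $\mathrm{(i)}\Rightarrow\mathrm{(ii)}$ is the standard one; the use of the Aubin property of $\mathcal F$ (valid under Slater in this setting) to pass from strong uniqueness to isolated calmness is correct, and your outline of the estimate, while terse, contains all the ingredients. The crucial step $\mathrm{(iv)}\Rightarrow\mathrm{(i)}$ is handled well: you correctly note that Slater forces $0\notin\mathrm{co}\{a_t:t\in T_{\bar b}(\bar x)\}$, hence $A_{\bar b}(\bar x)$ is closed and a supporting hyperplane at the boundary point $-\bar c$ exists; the construction $b^\mu_t=\max\{\bar b_t,\langle a_t,\bar x+\mu d\rangle\}$ together with the compactness/continuity argument showing $\|b^\mu-\bar b\|_\infty=o(\mu)$ is exactly what is needed, and the fallback case $\langle a_t,d\rangle\le0$ for all $t$ is disposed of correctly via the recession direction. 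The finite-case addendum $\mathrm{(v)}\Rightarrow\mathrm{(i)}$ is the natural simplification.

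Two minor points worth tightening in a final write-up: (a) when the active cone has empty interior you should say explicitly that it lies in a hyperplane, which still furnishes the needed $d$; (b) in $\mathrm{(ii)}\Rightarrow\mathrm{(iii)}$ the bound on $\|x-x'\|$ should be made explicit (it follows since both $x$ and $x'$ lie in a fixed neighborhood of $\bar x$). Neither is a gap.
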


\begin{rem}
\label{Rem chain}\emph{Paper \cite{CDLP09} deals with convex optimization
problems with canonical perturbations, and provides examples showing that
implications }%
\begin{equation*}
\emph{\ (ii)\ }\Rightarrow \emph{\ (iii)\ }\Rightarrow \emph{\ (iv)\ }%
\Rightarrow \emph{\ (v)}
\end{equation*}%
\emph{may be strict in the convex case. This paper also extends condition
(i) to the convex case and shows the equivalence between (ii) and this
extended version of (i).}
\end{rem}

Concerning the Aubin property, we have the following result (where $%
\left\vert D\right\vert $ stands for the cardinality of $D$):

\begin{theo}
\emph{\cite[Theorem 16]{CKLP}}\label{RM3} For the linear semi-infinite
program (\ref{P}), let $\left( \left( {\bar c},{\bar b}\right) ,%
\bar{x}\right) \in \mathrm{gph}\mathcal{S}.$ Then, the following
conditions are equivalent: \vspace{0.5ex}

(i) $\mathcal{S}$ is Aubin continuous at $\left( \left( \bar{c},\bar{b}%
\right) ,\bar{x}\right);$

(ii) $\mathcal{S}$ is strongly Lipschitz stable at $\left( \left( \bar{c},%
\bar{b}\right) ,\bar{x}\right) $ (i. e., locally single-valued and
Lipschitz continuous around $\left( \bar{c},\bar{b}\right) $)$;$

(iii)$\mathcal{S}$ is locally single-valued and continuous in some neighborhood of $(%
\bar{c},\bar{b});$

(iv) $\mathcal{S}$ is single valued in some neighborhood of $\left( \bar{c},%
\bar{b}\right) ;$

(v) $P\left( \bar{c},\bar{b}\right) $ satisfies the Slater{\ condition} and
there is no $D\subset T_{\bar{b}}\left( \bar{x}\right) $ with $%
\left\vert D\right\vert <p$ such that $-\bar{c}\in \mathrm{cone}\left(
\left\{ a_{t},\text{ }t\in D\right\} \right) ;$

(vi) $P\left( \bar{c},\bar{b}\right) $ satisfies the Slater{\ condition} and
for each $D\subset T_{\bar{b}}\left( \bar{x}\right) $ with $\left\vert
D\right\vert =p$ such that $-\bar{c}\in \mathrm{cone}\left( \left\{ a_{t},%
\text{ }t\in D\right\} \right) $, all possible subsets of $\{a_{t},$ $t\in
D;-{\bar{c}}\}$ with $p$ elements are linearly independent;

(vii) $\left( \bar{c},\bar{b}\right) \in \mathrm{{int}\thinspace}\left(
\{\left( c,b\right) \mid \mathcal{S}\left( c,b\right) \text{ consists of a
strongly unique minimizer}\}\right) .$
\end{theo}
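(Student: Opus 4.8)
The plan is to prove the cycle (ii) $\Rightarrow$ (iii) $\Rightarrow$ (iv) $\Rightarrow$ (v) $\Rightarrow$ (ii), to insert (i) via the trivial (ii) $\Rightarrow$ (i) together with (i) $\Rightarrow$ (iv), to treat (v) $\Leftrightarrow$ (vi) as a self-contained linear-algebraic lemma, and to attach (vii) by (ii) $\Rightarrow$ (vii) $\Rightarrow$ (iv). Two tools do most of the work: (a) Theorem~\ref{th charact_linear}, which under the Slater condition identifies strong uniqueness of the minimizer with the ``interior'' condition $-\bar{c}\in\mathrm{int}\,A_{\bar{b}}(\bar{x})$; and (b) a localization principle --- since $T$ is compact and $t\mapsto(a_{t},b_{t})$ is continuous, on a small ball around $\bar{x}$ only constraints with index near $T_{\bar{b}}(\bar{x})$ are active or nearly active, so near $(\bar{c},\bar{b})$ the mapping $\mathcal{S}$ is governed by a fixed finite subsystem and one may import Robinson's upper-Lipschitz theorem for polyhedral multifunctions.

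I would dispatch the routine arrows first: (ii) $\Rightarrow$ (iii) $\Rightarrow$ (iv), (ii) $\Rightarrow$ (i) and (vii) $\Rightarrow$ (iv) are immediate from the definitions (a strongly unique minimizer is in particular unique, so $\mathcal{S}$ is single-valued). For (v) $\Leftrightarrow$ (vi) I argue entirely at the nominal data: by Proposition~\ref{Prop_CLFM} one has $-\bar{c}\in A_{\bar{b}}(\bar{x})=\mathrm{cone}\{a_{t}:t\in T_{\bar{b}}(\bar{x})\}$, and Carath\'{e}odory's theorem for convex cones yields $D$ with $|D|\le p$ and $-\bar{c}\in\mathrm{cone}\{a_{t}:t\in D\}$; condition (v) says one is forced to take $|D|=p$ with the $a_{t}$, $t\in D$, linearly independent. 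A short computation shows that a linear dependence inside $\{a_{t}:t\in D\}\cup\{-\bar{c}\}$ (for $|D|=p$) always allows one generator to be eliminated from a strictly positive representation of $-\bar{c}$, yielding one with fewer than $p$ generators; hence (v) fails exactly when (vi) fails.

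For (v) $\Rightarrow$ (ii): first, (v) together with Slater forces $-\bar{c}\in\mathrm{int}\,A_{\bar{b}}(\bar{x})$, since if $-\bar{c}$ lay on the relative boundary of the cone $A_{\bar{b}}(\bar{x})$ a supporting hyperplane would place it in a proper face spanned by at most $p-1$ of the $a_{t}$, contradicting (v) via Carath\'{e}odory on that face; by Theorem~\ref{th charact_linear}, $\bar{x}$ is then the strongly unique minimizer of $P(\bar{c},\bar{b})$. Next, localize: using the localization principle and the general-position content of (vi), the set of ``essentially active'' indices is stable under small perturbations of $(c,b)$, so on a neighborhood $\mathcal{S}$ coincides with the argmin map of a fixed non-degenerate finite linear program, and Robinson's theorem delivers local single-valuedness together with a Lipschitz estimate, i.e., (ii). Running the same stability argument at every nearby parameter shows that each $P(c,b)$ close to $P(\bar{c},\bar{b})$ has a strongly unique minimizer, which is (vii); with (ii) $\Rightarrow$ (i) and the trivial arrows the cycle then closes.

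The substantive remaining links are (i) $\Rightarrow$ (iv) and (iv) $\Rightarrow$ (v), and I expect the latter to be the main obstacle. For (i) $\Rightarrow$ (iv): if $\mathcal{S}(\bar{c},\bar{b})$ contained a segment $[\bar{x},\bar{x}+\delta v]$, then tilting $\bar{c}$ slightly toward one end makes the optimal set collapse onto that endpoint; taking $x$ to be the midpoint of the segment and letting the perturbed parameter tend to $(\bar{c},\bar{b})$, the left-hand side of (\ref{Aubin}) stays at least $\delta/2$ while the right-hand side tends to $0$, a contradiction, so $\mathcal{S}(\bar{c},\bar{b})$ is a singleton, and the same reasoning near $(\bar{c},\bar{b})$ gives local single-valuedness. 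For (iv) $\Rightarrow$ (v): single-valuedness forces Slater (otherwise, by the failure of metric regularity of the feasible set mapping $\mathcal{F}$, an arbitrarily small right-hand-side perturbation would destroy single-valuedness), and it forces the non-degeneracy condition: if $-\bar{c}=\sum_{t\in D}\lambda_{t}a_{t}$ with all $\lambda_{t}>0$ and $|D|<p$, then $\{a_{t}:t\in D\}^{\perp}\ne\{0\}$, and one constructs $(c,b)$ arbitrarily close to $(\bar{c},\bar{b})$ by keeping exactly the constraints in $D$ active, relaxing the other previously active ones so they no longer bind, and keeping $c$ in the relative interior of $\mathrm{cone}\{a_{t}:t\in D\}$; the perturbed problem then has a whole segment of optimal solutions, contradicting (iv). The delicate point --- and the main difficulty of the proof --- is to verify \emph{uniformly in the perturbation} that the non-$D$ constraints stay inactive and that optimality persists along the segment; this is precisely where compactness of $T$ and continuity of $t\mapsto(a_{t},b_{t})$ enter.
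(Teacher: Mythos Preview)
The paper does not prove this theorem; it is quoted verbatim from \cite[Theorem~16]{CKLP} and used only as background (notably to isolate condition~(v), the N\"urnberger condition, for later use). So there is no ``paper's own proof'' to compare against, and your proposal should be assessed on its own.

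The overall architecture of your cycle is reasonable, and several arrows are handled correctly: the trivial implications, the face/hyperplane argument showing that (v) forces $-\bar c\in\mathrm{int}\,A_{\bar b}(\bar x)$, the tilting argument for (i)~$\Rightarrow$~(iv), and the perturbation construction in (iv)~$\Rightarrow$~(v). But the step (v)~$\Rightarrow$~(ii) contains a genuine gap. You write that, thanks to a ``localization principle'', on a neighborhood of $(\bar c,\bar b)$ the mapping $\mathcal S$ ``coincides with the argmin map of a fixed non-degenerate finite linear program'', and then invoke Robinson's polyhedral upper-Lipschitz theorem. In the semi-infinite setting this is false: $T_{\bar b}(\bar x)$ is merely a compact subset of $T$ and need not be finite, and for $(c,b)$ near $(\bar c,\bar b)$ the active index set $T_b(x)$ typically varies over an infinite family of indices close to $T_{\bar b}(\bar x)$. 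The feasible set $\mathcal F(b)$ is in general not polyhedral, so Robinson's theorem is simply not available, and there is no ``fixed finite subsystem'' governing $\mathcal S$ locally. The actual route (in the cited source) passes instead through the stability of the N\"urnberger condition (v) under small perturbations of $(c,b)$ --- which requires a careful compactness/upper-semicontinuity argument on $T_b(x)$ --- together with a quantitative version of strong uniqueness (cf.\ Theorem~\ref{th charact_linear}) to obtain the Lipschitz estimate directly, or alternatively through coderivative criteria for the Aubin property. Your sketch does not supply either mechanism.

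A secondary issue: in (iv)~$\Rightarrow$~(v) the claim that failure of Slater ``would destroy single-valuedness'' via failure of metric regularity of $\mathcal F$ is too quick. Failure of Slater lets you perturb $b$ so that $\mathcal F(b)=\emptyset$, hence $\mathcal S(c,b)=\emptyset$; you must state explicitly that ``single-valued'' in (iv) means \emph{exactly one element}, so emptiness already contradicts (iv). If instead one reads ``at most one'', the argument as written does not conclude.
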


We emphasize condition $(v)$ which will be referred to as the \emph{N\"{u}%
rnberger condition}, since it was stated in \cite{Nu84} (in the equivalent
form $\left( vi\right) $) for characterizing the counterpart of condition $%
\left( vii\right) $ when all coefficients (including the $a_{t}$'s) are
subject to perturbations.

In the next theorem we provide a characterization of the calmness of $%
\mathcal{S}$ at $\left( \left( {\bar{c}},{\bar{b}}\right) ,\bar{x}%
\right) ,$ under {\ the} Slater{\ condition}, without requiring $\mathcal{S}%
\left( {\bar{c}},{\bar{b}}\right) $ to be reduced to the singleton $\{%
\bar{x}\}.$ In this characterization we {use} the \emph{level set
mapping} $\mathcal{L}:\mathbb{R\times }C\left( T,\mathbb{R}\right)
\rightrightarrows \mathbb{R}^{p}$ given by
\begin{equation}
\mathcal{L}\left( \alpha ,b\right) =\left\{ x\in \mathbb{R}^{p}\mid {\bar{c}%
^{\prime }x}\leq \alpha ;\text{ }{a_{t}^{\prime }x}\leq b_{t},\,\,t\in
T\right\}   \label{eq_levelset}
\end{equation}%
and the supremum function $\bar{f}:\mathbb{R}^{p}\rightarrow \mathbb{R}$
defined as 
\begin{equation}
\bar{f}\left( x\right) :=\sup \left\{ {\bar{c}^{\prime }(x-\bar{x})%
};\text{ }a_{t}^{\prime }x-{\bar{b}}_{t},\,\,\,t\in T\right\} .
\label{eq_f_bar}
\end{equation}
In the following theorem, we appeal to the
zero sublevel set of $\bar{f}:$%
\begin{equation}
\left[ \bar{f}\leq 0\right] :=\left\{ x\in \mathbb{R}^{p}\mid 
\bar{f}\left( x\right) \leq 0\right\} =\mathcal{L}\left( \bar{c}%
^{\prime }\bar{x},\bar{b}\right) =\mathcal{S}\left( \bar{c},%
\bar{b}\right) . \label{eq_sublevel}
\end{equation}


\begin{theo}
\label{Th_charact calmness}Let $(({\bar{c}},{\bar{b}}),\bar{x})\in
\mathrm{gph}\mathcal{S}$ and assume $P\left( \bar{c}\bar{,b}\right) $
satisfies the Slater{\ condition}. Then the following conditions are
equivalent:

$(i)$\emph{\ }$\mathcal{S}$ is calm at $(({\bar c},{\bar b}),\bar{x}%
);$

$(ii)$\emph{\ }$\mathcal{S}_{{\bar c}}$ is calm at $({\bar b},\bar{x%
});$

$(iii)$\emph{\ }$\mathcal{L}$ is calm at $((\left\langle {\bar c},%
\bar{x}\right\rangle ,{\bar b}),\bar{x}){\ ;}$

$\left( iv\right) $ $\bar{f}$ has a \emph{local error bound} at $%
\bar{x};$\ i. \hspace{-0.15cm}e., there exist $\kappa \geq 0$ and a
neighborhood $U$ of $\bar{x}$ such that%
\begin{equation*}
d\left( x,\left[ \bar{f}\leq 0\right] \right) \leq \kappa \left[
\bar{f}\left( x\right) \right] _{+},\text{ for all }x\in U{\ ;}
\end{equation*}%
{\ (we recall that given a real $a$, 
$a_{+}$ {stands} for $\max (a,0)$)}

$\left( v\right) $ $\underset{x\rightarrow \bar{x},~\bar{f}\left(
x\right) >0}{\lim \inf }d_{\ast }\left( {0}_{p},\partial \bar{f}\left(
x\right) \right) >0.$
\end{theo}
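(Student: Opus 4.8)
The plan is to view $\mathcal{L}$ as the feasible‑set mapping of the linear semi‑infinite system obtained by adjoining to the original constraints the ``objective row'' $\bar{c}^{\prime }x\le \alpha $, and to move between $\mathcal{S}$, $\mathcal{S}_{\bar{c}}$ and $\mathcal{L}$ through the optimal value. Throughout I use that $\bar{x}\in \mathcal{F}(\bar{b})$ (so $\bar{f}(\bar{x})=0$) and the identity $[\bar{f}\le 0]=\mathcal{L}(\bar{c}^{\prime }\bar{x},\bar{b})=\mathcal{S}(\bar{c},\bar{b})$ recorded in (\ref{eq_sublevel}). The implications to be established form the cycle $(iv)\Rightarrow (i)\Rightarrow (ii)\Rightarrow (iii)\Rightarrow (iv)$, together with $(iii)\Leftrightarrow (iv)\Leftrightarrow (v)$.

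For $(iii)\Leftrightarrow (iv)$ I argue directly. By \cite[Theorem 3H.3 and Exercise 3H.4]{DR09} calmness of $\mathcal{L}$ at $((\bar{c}^{\prime }\bar{x},\bar{b}),\bar{x})$ is the metric subregularity of $\mathcal{L}^{-1}$ at $(\bar{x},(\bar{c}^{\prime }\bar{x},\bar{b}))$. Since $\mathcal{L}^{-1}(x)=\{(\alpha ,b)\mid \alpha \ge \bar{c}^{\prime }x,\ b_{t}\ge a_{t}^{\prime }x\ \forall t\in T\}$, minimizing over this set (the suprema over $T$ being attained by compactness) gives, for the max‑norm on $\mathbb{R}\times C(T,\mathbb{R})$, the equality $d((\bar{c}^{\prime }\bar{x},\bar{b}),\mathcal{L}^{-1}(x))=\max \{(\bar{c}^{\prime }(x-\bar{x}))_{+},\ \sup_{t\in T}(a_{t}^{\prime }x-\bar{b}_{t})_{+}\}=[\bar{f}(x)]_{+}$, so metric subregularity of $\mathcal{L}^{-1}$ is word for word the local error bound in $(iv)$. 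Then $(iv)\Leftrightarrow (v)$ is the standard characterization of the local error bound of the finite‑valued continuous convex function $\bar{f}$ via its subdifferential: concretely I would quote \cite[Theorem 1]{KNT10}, applied to $\bar{f}$ as the supremum function of the augmented system, which writes the relevant calmness modulus as $\limsup_{x\to \bar{x},\ \bar{f}(x)>0}d_{\ast }(0_{p},\partial \bar{f}(x))^{-1}$, whose finiteness is exactly $(v)$.

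Next comes the cycle through the argmin mappings. The implication $(i)\Rightarrow (ii)$ is immediate, $\mathcal{S}_{\bar{c}}$ being the restriction of $\mathcal{S}$ to $\{\bar{c}\}\times C(T,\mathbb{R})$. For $(iv)\Rightarrow (i)$, let $(c,b)$ be near $(\bar{c},\bar{b})$ and $x\in \mathcal{S}(c,b)\cap U$; the Slater condition makes $\mathcal{F}$ Lipschitz lower semicontinuous near $\bar{b}$ (see \cite{libro}), so there is $x_{b}\in \mathcal{F}(b)$ with $\Vert x_{b}-\bar{x}\Vert \le L\Vert b-\bar{b}\Vert $, and optimality gives $c^{\prime }x=\mathrm{val}(c,b)\le c^{\prime }x_{b}$; rearranging, $\bar{c}^{\prime }(x-\bar{x})=\bar{c}^{\prime }x_{b}-\bar{c}^{\prime }\bar{x}+(c-\bar{c})^{\prime }(x_{b}-x)\le C\Vert (c,b)-(\bar{c},\bar{b})\Vert $ (the cross term is $O(\Vert c-\bar{c}\Vert _{\ast })$ times a quantity bounded on the neighborhood), while $a_{t}^{\prime }x-\bar{b}_{t}\le \Vert b-\bar{b}\Vert _{\infty }$; hence $[\bar{f}(x)]_{+}\le C^{\prime }\Vert (c,b)-(\bar{c},\bar{b})\Vert $ and $(iv)$ yields $d(x,\mathcal{S}(\bar{c},\bar{b}))=d(x,[\bar{f}\le 0])\le \kappa C^{\prime }\Vert (c,b)-(\bar{c},\bar{b})\Vert $, i.e.\ $(i)$. (The reverse step $(iii)\Rightarrow (ii)$, not needed for the cycle but a useful check, is easy: under Slater, linear programming duality with $b$‑independent dual feasible set makes $b\mapsto \mathrm{val}(\bar{c},b)$ Lipschitz near $\bar{b}$ with $\mathrm{val}(\bar{c},\bar{b})=\bar{c}^{\prime }\bar{x}$, so $\mathcal{S}_{\bar{c}}(b)=\mathcal{L}(\mathrm{val}(\bar{c},b),b)$ with $(\mathrm{val}(\bar{c},b),b)$ within $O(\Vert b-\bar{b}\Vert )$ of $(\bar{c}^{\prime }\bar{x},\bar{b})$, and feeding this into the calmness inequality for $\mathcal{L}$, together with $\mathcal{L}(\bar{c}^{\prime }\bar{x},\bar{b})=\mathcal{S}_{\bar{c}}(\bar{b})$, gives $(ii)$.)

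The step $(ii)\Rightarrow (iii)$ is the only one leaving $(ii)$, and hence the one that closes all the equivalences; it is precisely what the level‑set mapping $\mathcal{L}$ was introduced for in \cite{CHPT12}, so I would invoke that characterization --- and I expect it to be the main obstacle. A direct attempt would, given $x$ near $\bar{x}$ with $\bar{f}(x)>0$, try to build $b^{x}\in C(T,\mathbb{R})$ close to $\bar{b}$ with $x\in \mathcal{S}_{\bar{c}}(b^{x})$ and then apply the calmness inequality of $(ii)$; but placing $-\bar{c}$ in the active cone at $x$ requires activating at $x$ finitely many indices $t_{1},\dots ,t_{m}\in T_{\bar{b}}(\bar{x})$ (Carath\'{e}odory applied to $-\bar{c}\in A_{\bar{b}}(\bar{x})$), which forces $b^{x}_{t_{i}}=a_{t_{i}}^{\prime }x$ and thus moves the right‑hand side at $t_{i}$ by $|a_{t_{i}}^{\prime }(x-\bar{x})|=O(\Vert x-\bar{x}\Vert )$ instead of by $O(\bar{f}(x))$ --- and, in the semi‑infinite setting, may even break continuity of $b^{x}$ in $t$ near such a $t_{i}$. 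The estimate then collapses to the trivial $d(x,\mathcal{S}(\bar{c},\bar{b}))\le \kappa\cdot O(\Vert x-\bar{x}\Vert )$. Carrying this out correctly --- by perturbing instead the level parameter $\alpha $ of $\mathcal{L}$, as in \cite{CHPT12} --- is where the real work lies.
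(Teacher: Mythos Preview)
Your proposal is correct and arrives at the same equivalences, but the organization differs from the paper's in one notable respect. The paper's proof is almost entirely by citation: $(i)\Leftrightarrow (ii)\Leftrightarrow (iii)$ is quoted wholesale from \cite[Theorem~1]{CHPT12}; $(iii)\Leftrightarrow (iv)$ is the same direct computation you give (via $[\bar f(x)]_{+}=d((\bar c'\bar x,\bar b),\mathcal L^{-1}(x))$ and the calmness/metric-subregularity duality); and the link with $(v)$ is quoted from \cite{AzCo04} rather than \cite{KNT10}, though either reference works for a finite-valued convex supremum function.

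What you do differently is supply a self-contained argument for $(iv)\Rightarrow (i)$: using Lipschitz lower semicontinuity of $\mathcal F$ under Slater to produce $x_{b}\in \mathcal F(b)$ close to $\bar x$, then bounding $\bar c'(x-\bar x)$ by optimality and plugging into the error bound. This is sound, and in fact it is a qualitative precursor of the paper's Theorem~\ref{neige}, which later carries out exactly this estimate with explicit constants (the bounded KKT multipliers there play the role of your Lipschitz bound on the optimal value). So your extra step is not wasted effort; it is the mechanism behind the quantitative upper bound in Section~5.1. Conversely, the paper's approach buys brevity here at the cost of deferring all content to \cite{CHPT12}. You are also right that $(ii)\Rightarrow (iii)$ is the genuinely nontrivial implication and that the naive ``make $x$ optimal by adjusting $b$'' attempt fails for the reason you describe; the paper, like you, simply invokes \cite{CHPT12} for it, noting that the subdifferential condition $(v)$ is the key tool used there.
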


\begin{proof}
The equivalence $\left( i\right) \Leftrightarrow \left( ii\right)
\Leftrightarrow \left( iii\right) $ is exactly \cite[Theorem 1]{CHPT12},
whereas $\left( iii\right) \Leftrightarrow \left( iv\right) $ is a direct

consequence of the definitions, as far as, 
\begin{eqnarray}
\left[ \bar{f}\left( x\right) \right] _{+} &=&\left[ \sup \{\bar{c}%
^{\prime }x-\bar{c}^{\prime }\bar{x};\text{ }a_{t}^{\prime }x-%
\bar{b}_{t},\,t\in T\}\right] _{+}  \label{eq_positive_part}\nonumber \\
&=&\sup \{\left[ \bar{c}^{\prime }x-\bar{c}^{\prime }\bar{x}%
\right] _{+};\text{ }\left[ a_{t}^{\prime }x-\bar{b}_{t}\right]
_{+},\,t\in T\}   \\
&=&d\left( \left( \bar{c}^{\prime }\bar{x},\bar{b}\right) ,%
\mathcal{L}^{-1}\left( x\right) \right) ,\text{ for all }x\in \mathbb{R}^{p},\nonumber
\end{eqnarray}%
taking also into account (\ref{eq_sublevel}) and the equivalence between the
calmness of $\mathcal{L}$ and the metric subregularity of $\mathcal{L}^{-1}$
(see (\ref{eq_msubreg})).

 Finally, $\left( i\right) \Leftrightarrow
\left( v\right) $ can be traced out from \cite[Proposition 2.1 and Theorem
5.1]{AzCo04} (see also \cite{Jou00}), and constitutes a key tool for
establishing $\left( ii\right) \Rightarrow \left( iii\right) $ in \cite[%
Theorem 1]{CHPT12}.
\end{proof}

In order to provide an upper bound on the calmness modulus in {\ Section} 5,
we use the following notation: {\ for any finite} subset of indices $D\subset T, A_{D}$ 
denotes the matrix whose rows are $a_{t}^{\prime }, t\in D$ (given in some prefixed order).
Given any $b\in C\left( T,\mathbb{R}\right),$ we denote $b_{D}=\left( b_{t}\right) _{t\in D}.$ If moreover $x\in
\mathcal{S}\left( {\bar{c}},b\right) $, we {de}{note}
\begin{equation}
\mathcal{T}_{b}\left( x\right) =\left\{ D\subset T_{b}\left( x\right)
\left\vert
\begin{array}{c}
\left\vert D\right\vert =p,\text{ }A_{D}\text{ is nonsingular,} \\
\text{and }-{\bar{c}}\in \mathrm{cone}\left\{ a_{t},\text{ }t\in D\right\}
\end{array}%
\right. \right\} .  \label{eq_tbx}
\end{equation}%
This set was already introduced in \cite[p. 520]{CGP08} with the aim of
obtaining bounds on the Lipschitz modulus, provided that the Aubin property
holds (i. e., under the N\"{u}rnberger condition). In that paper, the
nonsingularity of $A_{D}$ was not explicitly required when defining $%
\mathcal{T}_{b}\left( x\right) ,$ since it follows as a consequence of the N%
\"{u}rnberger condition (at points $\left( \left( {\bar{c}},b\right)
,x\right) $ $\in \mathrm{gph}\mathcal{S}$ around $\left( \left( {\bar{c}},{%
\bar{b}}\right) ,\bar{x}\right) $).

Given $D\subset T$ with $\left\vert D\right\vert =p,$ we identify {\ the}
matrix $A_{D}$ with the `endomorphism' $\mathbb{R}^{p}\ni x\mapsto A_{D}x\in
\mathbb{R}^{D}$, where $\mathbb{R}^{p}$ is equipped with an arbitrary norm $%
\left\Vert \cdot \right\Vert $ and $\mathbb{R}^{D}$ is endowed with the
supremum norm $\left\Vert \cdot \right\Vert _{\infty }.$ Recall that $\left(
\left\Vert \cdot \right\Vert _{\infty }\right) _{\ast }=\left\Vert \cdot
\right\Vert _{1}.$ For our choice of norms, provided that $A_{D}$\ is
non-singular, we have
\begin{equation}
\left\Vert A_{D}^{-1}\right\Vert :=\max_{\left\Vert y\right\Vert _{\infty
}\leq 1}\left\Vert A_{D}^{-1}y\right\Vert =\max_{y\in \left\{ -1,1\right\}
^{p}}\left\Vert A_{D}^{-1}y\right\Vert =\left( \min_{\left\Vert \lambda
\right\Vert _{1}=1}\left\Vert A_{D}^{\prime }\lambda \right\Vert _{\ast
}\right) ^{-1}.  \label{Norm calculus}
\end{equation}%
The second equality comes from the use of $\left\Vert \cdot \right\Vert
_{\infty }$ in $\mathbb{R}^{D},$ together with the fact that\emph{\ }$%
\left\{ -1,1\right\} ^{p}$ is the set of extreme points of the associated
closed unit ball and the function to be maximized is convex. The last
equality is a straightforward consequence of \cite[Corollary 3.2]{CDLP05}
together with the fact that $\left\Vert A_{D}^{-1}\right\Vert $ coincides
with the (metric) regularity modulus of $A_{D}$ (Lipschitz modulus of $%
A_{D}^{-1}$), at any point of its graph.


\begin{theo}
\emph{(see \cite[Theorems 1 and 2, and Corollary 2]{CGP08})}\label%
{Th_cotainf} Assume, for the linear semi-infinite program (\ref{P}), that
the N\"{u}rnberger condition holds at $\left( \left( {\bar{c}},{\bar{b}}%
\right) ,\bar{x}\right) \in \mathrm{gph}\mathcal{S}.$ Then
\begin{equation}
\mathrm{lip}\mathcal{S}\left( \left( {\bar{c}},{\bar{b}}\right) ,\bar{x}%
\right) \geq \sup_{D\in \mathcal{T}_{{\bar{b}}}(\bar{x})}\left\Vert
A_{D}^{-1}\right\Vert ,  \label{CotaInf}
\end{equation}%
and equality occurs in (\ref{CotaInf}) provided that either $T$ is finite or
$p\leq 3$.
\end{theo}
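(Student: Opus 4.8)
The statement has two halves: the lower bound (\ref{CotaInf}), which I would prove under the N\"urnberger condition for every $p$, and the reverse inequality, which I would only attempt under the extra hypothesis that $T$ is finite or $p\le 3$. I would rely on three preliminary observations. (a) By Theorem~\ref{RM3} the N\"urnberger condition makes $\mathcal{S}$ locally single-valued and Lipschitz around $(\bar c,\bar b)$ and forces $\mathcal{S}(\bar c,\bar b)=\{\bar x\}$; write $\mathcal{S}(c,b)=\{s(c,b)\}$ there, and note that by the KKT characterization (Proposition~\ref{Prop_CLFM}, valid since the Slater condition persists near $\bar b$) one has $s(c,b)=A_D^{-1}b_D$ for a suitable active basis $D$ with $|D|=p$, $A_D$ nonsingular and $-c\in\mathrm{cone}\{a_t:t\in D\}$. (b) For $D\in\mathcal{T}_{\bar b}(\bar x)$ every representation $-\bar c=\sum_{t\in D}\lambda_t a_t$ with $\lambda_t\ge 0$ has all $\lambda_t>0$ (a zero coefficient would put $-\bar c$ in the conical hull of fewer than $p$ indices of $T_{\bar b}(\bar x)$, against the N\"urnberger condition), so $-\bar c\in\mathrm{int}\,\mathrm{cone}\{a_t:t\in D\}$. (c) The Slater condition yields $0\notin\mathrm{co}\{a_t:t\in T_{\bar b}(\bar x)\}$ (test a Slater point against $\bar x$), which keeps conic-combination coefficients bounded in the limiting arguments below; I also use the norm identity (\ref{Norm calculus}) throughout.

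\textbf{Lower bound.} Fix $D\in\mathcal{T}_{\bar b}(\bar x)$; I would prove $\mathrm{lip}\,\mathcal{S}((\bar c,\bar b),\bar x)\ge\|A_D^{-1}\|$ and then take the supremum. Choose, via (\ref{Norm calculus}), an extreme vector $y\in\{-1,1\}^p$ with $\|A_D^{-1}y\|=\|A_D^{-1}\|$, selecting among the maximizers (if there are several) one that keeps $a_t'A_D^{-1}y$ under control for the other active indices, and set $w:=A_D^{-1}y$. For small $\varepsilon>0$ I would perturb only the right-hand side, to a continuous $b^\varepsilon$ with $b^\varepsilon_t=\bar b_t+\varepsilon y_t$ for $t\in D$: then the constraints in $D$ are active at $\bar x+\varepsilon w$, so by observation~(b) the KKT conditions for $P(\bar c,b^\varepsilon)$ hold there, i.e.\ $\bar x+\varepsilon w\in\mathcal{S}(\bar c,b^\varepsilon)$. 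The remaining values of $b^\varepsilon$ are fixed by Tietze's extension theorem so that all the other constraints stay satisfied at $\bar x+\varepsilon w$ --- absorbing the $O(\varepsilon)$ displacement into the strict slack of the inactive constraints --- while $\|b^\varepsilon-\bar b\|_\infty=\varepsilon(1+o(1))$ (possible because $\|y\|_\infty=1$). Letting $\varepsilon\downarrow 0$ in (\ref{formulae lip}), the ratio $\|\varepsilon w\|/(\varepsilon(1+o(1)))$ tends to $\|A_D^{-1}\|$, giving the claim. I expect the delicate point here to be matching the sup-norm budget with feasibility near the boundary of $T_{\bar b}(\bar x)$, where an inactive index may carry arbitrarily little slack; this is where the freedom in the choice of $y$ and the slack supplied by Slater have to be used.

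\textbf{Equality when $T$ is finite.} Using KKT (Proposition~\ref{Prop_CLFM}), near $((\bar c,\bar b),\bar x)$ the graph of $\mathcal{S}$ is the union over subsets $I\subseteq T$ of the polyhedral sets $\{(c,b,x):a_t'x=b_t\ (t\in I),\ a_t'x\le b_t\ (t\notin I),\ -c\in\mathrm{cone}\{a_t:t\in I\}\}$, hence a finite union of polyhedra; being also locally single-valued, $s$ is locally piecewise affine with finitely many pieces. On the piece governed by an active basis $D$ one has $s(c,b)=A_D^{-1}b_D$, independent of $c$, with Lipschitz constant exactly $\|A_D^{-1}\|$ for our choice of norms; and $D\in\mathcal{T}_{\bar b}(\bar x)$, because letting $(c,b)\to(\bar c,\bar b)$ inside the piece forces $D\subseteq T_{\bar b}(\bar x)$ (active indices of the solution tend to nominal active indices), $|D|=p$, $A_D$ nonsingular, and $-\bar c\in\mathrm{cone}\{a_t:t\in D\}$ (closedness of the cone). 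Since a short segment in parameter space meets finitely many pieces, $s$ is affine on each sub-segment, and the sub-segment lengths add up to the whole length, $\mathrm{lip}\,\mathcal{S}((\bar c,\bar b),\bar x)=\max_D\|A_D^{-1}\|\le\sup_{D\in\mathcal{T}_{\bar b}(\bar x)}\|A_D^{-1}\|$; combined with the lower bound this yields equality.

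\textbf{Equality when $p\le 3$.} Here I would work with (\ref{formulae lip}): pick $(c_k,b_k),(c_k',b_k')\to(\bar c,\bar b)$ with $\|s(c_k,b_k)-s(c_k',b_k')\|/\|(c_k,b_k)-(c_k',b_k')\|\to\mathrm{lip}\,\mathcal{S}((\bar c,\bar b),\bar x)$, join them by a segment $(c_\theta,b_\theta)$, and bound the numerator by integrating the a.e.\ derivative of the Lipschitz path $\theta\mapsto s(c_\theta,b_\theta)$; at points governed by a single active basis $D_\theta$ this derivative equals $A_{D_\theta}^{-1}(b_k'-b_k)_{D_\theta}$ (the $c$-direction drops out because $s$ is locally $c$-independent), of norm $\le\|A_{D_\theta}^{-1}\|\,\|(c_k,b_k)-(c_k',b_k')\|$. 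This reduces the task to bounding $\limsup_k\sup_\theta\|A_{D_\theta^{(k)}}^{-1}\|$ by $\sup_{D\in\mathcal{T}_{\bar b}(\bar x)}\|A_D^{-1}\|$. By compactness of $T$, pass to limits $D_\theta^{(k)}\to D_\ast$ with $D_\ast\subseteq T_{\bar b}(\bar x)$; using Carath\'eodory for conical hulls together with the boundedness of the coefficients from observation~(c), a collapse of $D_\ast$ to fewer than $p$ distinct indices, or to $p$ indices with $A_{D_\ast}$ singular, would put $-\bar c$ in the conical hull of fewer than $p$ indices of $T_{\bar b}(\bar x)$, contradicting the N\"urnberger condition; hence $D_\ast\in\mathcal{T}_{\bar b}(\bar x)$ and $\|A_{D_\theta^{(k)}}^{-1}\|\to\|A_{D_\ast}^{-1}\|$. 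The place where $p\le 3$ is genuinely needed is the treatment of segments lying in a degenerate stratum (a vertex with more than $p$ active constraints along a whole sub-arc), where the basis tracking the vertex need not carry $-\bar c$ in its cone: in $\mathbb{R}^p$ with $p\le 3$ a Carath\'eodory/Helly argument in the $(p-1)$-dimensional ``active slice'' excludes the configurations that would let $\|A_{D_\theta}^{-1}\|$ grow without staying close to $\mathcal{T}_{\bar b}(\bar x)$ (for $p\ge 4$ this fails, which is why equality is not asserted there). I expect this degenerate-stratum analysis to be the main obstacle of the whole proof.
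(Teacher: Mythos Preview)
The paper does not prove this theorem at all: it is quoted as a preliminary result from \cite{CGP08} (Theorems~1 and~2 and Corollary~2 there), so there is no ``paper's own proof'' to compare with. I can still assess your argument on its merits.

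Your lower-bound construction has a genuine gap. You perturb $\bar b$ to a single $b^\varepsilon$ and compare the resulting optimum $\bar x+\varepsilon w$ with $\bar x=\mathcal{S}(\bar c,\bar b)$; in the limsup defining $\mathrm{lip}\,\mathcal{S}$ this means you take $y'=(\bar c,\bar b)$ fixed, so the ratio you are estimating is actually the \emph{calmness} ratio. But the paper's own Example~\ref{Exa not attained} shows that $\mathrm{clm}\,\mathcal{S}((\bar c,\bar b),\bar x)$ can be strictly smaller than $\sup_{D\in\mathcal{T}_{\bar b}(\bar x)}\|A_D^{-1}\|$ (there, $\sqrt{5}<\sqrt{17}$), so no argument of this shape can deliver (\ref{CotaInf}). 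Concretely, the obstruction you flag as ``delicate'' is fatal: for $t\in T_{\bar b}(\bar x)\setminus D$ there is \emph{zero} slack, and one may have $a_t'w>1$ for every maximizing $y\in\{-1,1\}^p$, forcing $\|b^\varepsilon-\bar b\|_\infty>\varepsilon$. In Example~\ref{Exa not attained} with $D=\{1,2\}$ the two maximizers give $w=(-1,4)'$ and $w=(1,-4)'$; the active indices $t=4$ (resp.\ $t=3$) then yield $a_t'w=5$ (resp.\ $3$), so neither choice meets your budget, and Slater contributes nothing at these active indices.

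The cure is to exploit the \emph{two}-parameter nature of (\ref{formulae lip}): first move to a nearby $\tilde b$ at which $T_{\tilde b}(s(\bar c,\tilde b))=D$ exactly (possible by slightly relaxing the constraints in $T_{\bar b}(\bar x)\setminus D$ and, in the semi-infinite case, their neighbours), and then perturb only $\tilde b_D$ by $\varepsilon y$; on this piece $s(\bar c,\cdot)=A_D^{-1}(\cdot)_D$, so the ratio between the two perturbed parameters is exactly $\|A_D^{-1}\|$. This is precisely the mechanism displayed in the Remark following Example~\ref{Exa not attained}, where $\mathrm{lip}\,\mathcal{S}=\sqrt{17}$ is realised with $b^n=(1/n,-1/n,0,5/n)'$ and $\widetilde b^{\,n}=(0,0,0,5/n)'$, neither equal to $\bar b$. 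Your upper-bound sketch for finite $T$ is along standard lines and is essentially sound; the $p\le3$ case you correctly identify as the hard part, and your outline there is only a plan, relying on a Carath\'eodory/Helly step whose precise formulation (the condition called $(\mathcal{H})$ in \cite{CGP08}) is exactly what the cited paper develops.
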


In \cite{CGP08}, a general sufficient condition --called {\ $(\mathcal{H})$}
therein-- ensuring equality in (\ref{CotaInf}) was introduced. Condition {\ $%
(\mathcal{H})$} always holds when either $T$ is finite or $p\leq 3$.
Examples in \cite{CGP08} show that condition {\ $(\mathcal{H})$} can fail in
$\mathbb{R}^{4}$. The question of whether {\ $(\mathcal{H})$} can be
weakened (or even removed) while still ensuring equality in (\ref{CotaInf})
when $T$ is infinite remains an open problem.

In the case when $T$ is finite, it is obvious that, under the assumptions of
Theorem \ref{Th_cotainf}, $\sup_{D\in \mathcal{T}_{{\bar{b}}}(\bar{x}%
)}\left\Vert A_{D}^{-1}\right\Vert $ is an upper bound on $\mathrm{clm}%
\mathcal{S}\left( \left( {\bar{c}},{\bar{b}}\right) ,\bar{x}\right) .$
We show in {\ Section} 5 that, under the Slater{\ condition} and assuming
that $T$ is finite and $\mathcal{S}\left( {\bar{c}},{\bar{b}}\right)
=\left\{ \bar{x}\right\} ,$ $\sup_{D\in \mathcal{T}_{{\bar{b}}}(%
\bar{x})}\left\Vert A_{D}^{-1}\right\Vert $ 
remains an upper bound of $\mathrm{clm}\mathcal{S}\left( \left( {\bar{c}},{%
\bar{b}}\right) ,\bar{x}\right) $ 
even when $\mathcal{S}$ is not Aubin continuous at $\left( \left( \bar{c},%
\bar{b}\right) ,\bar{x}\right) .$ Let us point out that, when $T$ is
finite, $\mathcal{S}$ is always calm at any point of its graph as {it was
proved in} \cite{Robinson}\ (see \cite[Section 4]{CHPT12} for more
details).

\section{Lower bound on the calmness modulus}

In this section and the next one, we consider, associated with $\left(
b,x\right) \in \mathrm{gph}\mathcal{S}_{{\bar{c}}}$, the family of
\textquotedblleft KKT subsets" of $T$ given by
\begin{equation*}
\mathcal{K}_{b}\left( x\right) =\left\{ D\subset T_{b}\left( x\right)
\left\vert \left\vert D\right\vert \leq p\text{ and }-{\bar{c}}\in \mathrm{%
cone}\left\{ a_{t},\text{ }t\in D\right\} \right. \right\} ,
\end{equation*}%
which coincides with $\mathcal{T}_{b}\left( x\right) $ under the N\"{u}%
rnberger condition at $\left( \left( \bar{c},b\right) ,x\right) .$ For any $%
D\in \mathcal{K}_{{\bar{b}}}\left( \bar{x}\right) ,$ we consider the
supremum function, $f_{D}:\mathbb{R}^{p}\rightarrow \mathbb{R}$ given by%
\begin{align*}
f_{D}\left( x\right) & :=\sup \left\{ \left\langle a_{t},x\right\rangle -{%
\bar{b}}_{t},\text{ }t\in T;\text{ }-\left\langle a_{t},x\right\rangle +{%
\bar{b}}_{t},\text{ }t\in D\right\}  \\
& =\sup \left\{ \left\langle a_{t},x\right\rangle -{\bar{b}}_{t},\text{ }%
t\in T\setminus D;\text{ }\left\vert \left\langle a_{t},x\right\rangle -{%
\bar{b}}_{t}\right\vert ,\text{ }t\in D\right\} .
\end{align*}%
Roughly speaking, for a given $x\in \mathbb{R}^{p},$ $f_{D}\left( x\right) $
is the smallest perturbation size on ${\bar{b}},$ say $\left\Vert b-{\bar{b}}%
\right\Vert _{\infty }$, that makes $x$ a KKT point (hence optimal) for
problem $P\left( {\bar{c}},b\right) $ with $D$ as an associated
\textquotedblleft KKT index set". In the semi-infinite framework some
technical arrangements are needed to 
obtain a continuous perturbation. 
These comments will be formalized when constructing a certain sequence $%
\left\{ b^{n}\right\} _{n\in \mathbb{N}}$ in the following theorem.

For each $D\in \mathcal{K}_{{\bar{b}}}\left( \bar{x}\right) $ we may
consider the {set-valued} mapping $\mathcal{L}_{D}:C\left( T,\mathbb{R}%
\right) \times \mathbb{R}^{D}\rightrightarrows \mathbb{R}^{p}$ given by
\begin{equation*}
\mathcal{L}_{D}\left( b,d\right) :=\left\{ x\in \mathbb{R}^{p}\mid
\left\langle a_{t},x\right\rangle \leq b_{t},~t\in T;~\left\langle
-a_{t},x\right\rangle \leq d_{t},~t\in D\right\} .
\end{equation*}%
Observe that $\mathcal{L}_{D}\left( {\bar{b}},-{\bar{b}}_{D}\right) ,$ where
${\bar{b}}_{D}=\left( {\bar{b}}_{t}\right) _{t\in D},$ is the referred set
of KKT points of problem $P\left( {\bar{c}},{\bar{b}}\right) $ associated
with $D$ as the KKT index set. Accordingly,
\begin{equation}
\mathcal{L}_{D}\left( {\bar{b}},-{\bar{b}}_{D}\right) {=[f_{D}=0]}\subset
\mathcal{S}\left( {\bar{c}},{\bar{b}}\right) \text{ for all }D\in \mathcal{K}%
_{{\bar{b}}}\left( \bar{x}\right) .  \label{2001}
\end{equation}%
Let us also observe that
\begin{eqnarray}
\mathrm{clm}\mathcal{L}_{D}\left( \left( {\bar{b}},-{\bar{b}}_{D}\right) ,%
\bar{x}\right)  &=&\limsup_{x\rightarrow \bar{x}}\frac{d\left( x,%
\mathcal{L}_{D}\left( {\bar{b}},-{\bar{b}}_{D}\right) \right) }{d\left(
\left( {\bar{b}},-{\bar{b}}_{D}\right) ,\mathcal{L}_{D}^{-1}\left( x\right)
\right) }  \label{2001,5} \\
&=&\limsup_{x\rightarrow \bar{x}}\frac{d\left( x,\left[ f_{D}\leq 0%
\right] \right) }{\left[ f_{D}\left( x\right) \right] _{+}}  \notag \\
&=&\limsup_{\substack{ x\rightarrow \bar{x} \\ f_{D}\left( x\right) >0}}%
\frac{1}{d_{\ast }\left( {0}_{p},\partial f_{D}\left( x\right) \right) }, 
\notag
\end{eqnarray}%
where the last equality comes from \cite[Theorem 1]{KNT10} (and the second
one appeals to a similar argument as in (\ref{eq_positive_part})).\bigskip 
\begin{theo}
\label{Th_lowerbound}Let $\mathcal{S}\left( {\bar{c}},{\bar{b}}\right)
=\left\{ \bar{x}\right\} $ and assume that $P\left( {\bar{c}},{\bar{b}}%
\right) $ satisfies the Slater{\ condition}. Then%
\begin{equation*}
\mathrm{clm}\mathcal{S}\left( \left( {\bar{c}},{\bar{b}}\right) ,\bar{x}%
\right) \geq \mathrm{clm}\mathcal{S}_{{\bar{c}}}\left( {\bar{b}},\bar{x}%
\right) \geq \sup_{D\in \mathcal{K}_{{\bar{b}}}\left( \bar{x}\right)
}\limsup_{\substack{ x\rightarrow \bar{x}  \\ f_{D}\left( x\right) >0}}%
\frac{1}{d_{\ast }\left( {0}_{p},\partial f_{D}\left( x\right) \right) }.
\end{equation*}
\end{theo}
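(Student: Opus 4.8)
The plan is to prove the two inequalities separately. The first one, $\mathrm{clm}\mathcal{S}((\bar c,\bar b),\bar x)\ge\mathrm{clm}\mathcal{S}_{\bar c}(\bar b,\bar x)$, is routine: in the limsup formula (\ref{formulae clm}) one restricts the admissible perturbations to those of the form $(\bar c,b)$; since $\|(\bar c,b)-(\bar c,\bar b)\|=\|b-\bar b\|_\infty$ and $\mathcal{S}(\bar c,b)=\mathcal{S}_{\bar c}(b)$ while $\mathcal S(\bar c,\bar b)=\mathcal S_{\bar c}(\bar b)$, the quotients defining $\mathrm{clm}\mathcal{S}_{\bar c}(\bar b,\bar x)$ form a subfamily of those defining $\mathrm{clm}\mathcal{S}((\bar c,\bar b),\bar x)$. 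For the second, deeper inequality I would first reduce it to a statement about a single $D\in\mathcal{K}_{\bar b}(\bar x)$. Since $D\subset T_{\bar b}(\bar x)$ gives $\bar x\in\mathcal L_D(\bar b,-\bar b_D)$, combining (\ref{2001}) with the hypothesis $\mathcal{S}(\bar c,\bar b)=\{\bar x\}$ yields $[f_D\le 0]=\mathcal{L}_D(\bar b,-\bar b_D)=\{\bar x\}$; hence (\ref{2001,5}) rewrites the $D$-term of the supremum as $\mathrm{clm}\mathcal{L}_D((\bar b,-\bar b_D),\bar x)=\limsup_{x\to\bar x,\,f_D(x)>0}\|x-\bar x\|/f_D(x)$. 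So it suffices to establish, for each fixed $D$, that $\mathrm{clm}\mathcal{S}_{\bar c}(\bar b,\bar x)\ge\mathrm{clm}\mathcal{L}_D((\bar b,-\bar b_D),\bar x)$, and then take the supremum over $D$.

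To do this I would pick a sequence $x^n\to\bar x$ with $f_D(x^n)>0$ along which $\|x^n-\bar x\|/f_D(x^n)$ tends to that limsup (possibly $+\infty$), and manufacture continuous right-hand sides $b^n$ with $x^n\in\mathcal{S}_{\bar c}(b^n)$ and $\|b^n-\bar b\|_\infty\le(1+1/n)f_D(x^n)$. Since then $d(x^n,\mathcal{S}_{\bar c}(\bar b))=\|x^n-\bar x\|$ and $b^n\to\bar b$, feeding the pairs $(b^n,x^n)$ into (\ref{formulae clm}) gives $\mathrm{clm}\mathcal{S}_{\bar c}(\bar b,\bar x)\ge\limsup_n\|x^n-\bar x\|/\big((1+1/n)f_D(x^n)\big)=\limsup_n\|x^n-\bar x\|/f_D(x^n)$, which is exactly the required bound.

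The construction of $b^n$ is the heart of the matter and is where the semi-infinite ``technical arrangement'' announced before the statement enters. Having fixed $x^n$, I would use the continuity of $t\mapsto|a_t'x^n-\bar b_t|$ on the compact Hausdorff space $T$ — and the fact that its maximum over $D$ is $\le f_D(x^n)$, because $f_D(x^n)$ dominates both $\pm(a_t'x^n-\bar b_t)$ for $t\in D$ — to select an open set $V_n\supset D$ with $\sup_{t\in V_n}|a_t'x^n-\bar b_t|\le(1+1/n)f_D(x^n)$. By Urysohn's lemma ($T$ is normal, $D$ and $T\setminus V_n$ are disjoint closed sets) take a continuous $\phi_n\colon T\to[0,1]$ with $\phi_n\equiv1$ on $D$ and $\phi_n\equiv0$ off $V_n$, and set $b^n_t:=\phi_n(t)(a_t'x^n)+(1-\phi_n(t))\max\{a_t'x^n,\bar b_t\}$. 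Then $b^n\in C(T,\mathbb{R})$; one has $b^n_t\ge a_t'x^n$ everywhere (each of the two terms is), so $x^n\in\mathcal{F}(b^n)$; and $b^n_t=a_t'x^n$ on $D$, so $D\subset T_{b^n}(x^n)$ and hence $-\bar c\in\mathrm{cone}\{a_t:t\in D\}\subseteq A_{b^n}(x^n)$. Since Slater persists under small uniform perturbations of the right-hand side, Proposition \ref{Prop_CLFM} yields $x^n\in\mathcal{S}(\bar c,b^n)$ for $n$ large. For the size estimate: off $V_n$, $|b^n_t-\bar b_t|=[a_t'x^n-\bar b_t]_+\le[\sup_s(a_s'x^n-\bar b_s)]_+\le f_D(x^n)$; on $V_n$, $b^n_t-\bar b_t$ is a convex combination of $a_t'x^n-\bar b_t$ and its positive part, hence has modulus $\le|a_t'x^n-\bar b_t|\le(1+1/n)f_D(x^n)$; altogether $\|b^n-\bar b\|_\infty\le(1+1/n)f_D(x^n)$, and this tends to $0$ because $f_D$ is continuous with $f_D(\bar x)=0$.

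The step I expect to be the obstacle is precisely this construction: one must force $D$ to be \emph{active} at $x^n$ (so that the KKT cone at $(b^n,x^n)$ still captures $-\bar c$), keep $b^n$ continuous on a possibly non-metrizable $T$, and at the same time bound $\|b^n-\bar b\|_\infty$ by $f_D(x^n)$ rather than by the a priori much larger $\|x^n-\bar x\|$. The device that reconciles these is that the neighborhood $V_n$ — where $b^n$ is ``dragged'' toward $a_\cdot'x^n$ — may be chosen \emph{after} $x^n$, hence thin enough that continuity confines $|a_\cdot'x^n-\bar b_\cdot|$ on $V_n$ to within a factor $(1+1/n)$ of its maximum over $D$, which is already at most $f_D(x^n)$. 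Everything else — the reduction above, the Carathéodory-type observation that $\mathcal{K}_{\bar b}(\bar x)\ne\varnothing$ under Slater, and absorbing the factor $(1+1/n)$ in the limit — is bookkeeping.
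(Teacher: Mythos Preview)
Your proof is correct and follows the same overall strategy as the paper --- fix $D\in\mathcal{K}_{\bar b}(\bar x)$, pick a realizing sequence $x^n$, and use Urysohn's lemma to build continuous $b^n$ with $x^n\in\mathcal{S}_{\bar c}(b^n)$ and $\|b^n-\bar b\|_\infty\le(1+\tfrac1n)f_D(x^n)$ --- but with one genuine simplification worth noting. The paper chooses $x^n$ to realize $\limsup 1/d_*(0_p,\partial f_D(x))$ and must then \emph{separately} establish the lower bound $\|x^n-\bar x\|\ge f_D(x^n)/d_*(0_p,\partial f_D(x^n))$; to do so it computes $\partial f_D(x^n)$ explicitly via the Ioffe--Tikhomirov theorem, picks a nearest subgradient $u^n$, and applies the Ascoli distance formula to the half\-space $\langle u^n,\cdot\rangle\le\text{const}$ containing $\bar x$. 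You instead invoke (\ref{2001,5}) together with $[f_D\le0]=\{\bar x\}$ (from (\ref{2001}) and the uniqueness hypothesis) to rewrite the $D$-term directly as $\limsup_{x\to\bar x}\|x-\bar x\|/f_D(x)$, and then work with that ratio, bypassing the subdifferential step entirely; this is a cleaner, more elementary route to the same inequality. Your Urysohn interpolation (toward $\max\{a_t'x^n,\bar b_t\}$ off $D$) is a variant of the paper's (which interpolates toward $\bar b_t+f_D(x^n)$), but both yield the same size estimate. One minor remark: your appeal to ``Slater persists'' is unnecessary --- the KKT conditions are always \emph{sufficient} for optimality in linear (semi-infinite) programming, which is all that is used here and is exactly what the paper invokes --- but it is harmless.
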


\begin{proof}
The first inequality follows straightforwardly from (\ref{formulae clm}). In
order to prove the second one, let us consider a fixed $D\in \mathcal{K}_{{%
\bar{b}}}\left( \bar{x}\right) $ and let us 
show that
\begin{equation}
\mathrm{clm}\mathcal{S}_{{\bar{c}}}\left( {\bar{b}},\bar{x}\right) \geq
\limsup_{\substack{ x\rightarrow \bar{x}  \\ f_{D}\left( x\right) >0}}%
\frac{1}{d_{\ast }\left( {0}_{p},\partial f_{D}\left( x\right) \right) }.
\label{eq:clnSc>=limsup}
\end{equation}%
Write
\begin{equation*}
\limsup_{\substack{ x\rightarrow \bar{x}  \\ f_{D}\left( x\right) >0}}%
\frac{1}{d_{\ast }\left( {0}_{p},\partial f_{D}\left( x\right) \right) }%
=\lim_{n\rightarrow +\infty }\frac{1}{d_{\ast }\left( {0}_{p},\partial
f_{D}\left( x^{n}\right) \right) },
\end{equation*}%
for a certain sequence $\left\{ x^{{n}}\right\} _{n\in \mathbb{N}}$ such
that $\displaystyle\lim_{{n\rightarrow +\infty }}x^{{n}}=\bar{x}$ and $%
f_{D}\left( x^{{n}}\right) >0$ for all ${n}\in \mathbb{N}.$ Obviously, $x^{{n%
}}\notin \mathcal{S}_{{\bar{c}}}\left( {\bar{b}}\right) $ since $\mathcal{S}%
_{{\bar{c}}}\left( {\bar{b}}\right) =\left\{ \bar{x}\right\} $ and $%
f_{D}\left( \bar{x}\right) =0.$ Note that $d_{\ast }\left( {0}%
_{p},\partial f_{D}\left( x^{{n}}\right) \right) >0$ since $x^{{n}}\notin
\arg \min_{x\in \mathbb{R}^{p}}f_{D}.$
From now on, the proof is focused on the construction of a new sequence of
parameters $\left\{ b^{n}\right\} \subset C\left( T,\mathbb{R}^{p}\right) $
such that
\begin{equation}
x^{n}\in \mathcal{S}_{\bar{c}}(b^{n})\text{ and }\frac{\Vert x^{n}-\bar{x}%
\Vert }{\Vert b^{n}-\bar{b}\Vert _{\infty }}\geq \frac{n}{n+1}\cdot \frac{1}{%
d_{\ast }({0}_{p},\partial f_{D}(x^{n})}.  \label{eq_proof_fD}
\end{equation}%
Firstly, we will provide a lower bound on $\left\Vert x^{n}-\bar{x}%
\right\Vert .$ Clearly (\ref{eq_proof_fD}) implies (\ref{eq:clnSc>=limsup}).
In view of the closedness of $\partial f_{D}\left( x^{n}\right) $, we can
write $d_{\ast }\left( {0_{p}},\partial f_{D}\left( x^{n}\right) \right)
=\left\Vert u^{n}\right\Vert _{\ast }$ for a certain
\begin{equation*}
\begin{tabular}{ll}
$u^{n}\in \partial f_{D}\left( x^{n}\right) =\mathrm{conv}$ & $\hspace{-0.4cm%
}\left( \left\{ a_{t}\mid \left\langle a_{t},x^{n}\right\rangle -{\bar{b}}%
_{t}=f_{D}\left( x^{n}\right) ,~t\in T\right\} \right. $ \\
& $\left. \cup \left\{ -a_{t}\mid -\left\langle a_{t},x^{n}\right\rangle +{%
\bar{b}}_{t}=f_{D}\left( x^{n}\right) ,~t\in D\right\} \right) ,$%
\end{tabular}%
\end{equation*}%
(where we have appealed to the Ioffe-Tikhomirov theorem --see, for instance,
{\cite[Theorem 2.4.18]{Zal02}} {or, in the finite dimensional setting, to a
consequence of the Danskin Theorem \cite{danskin}}-- and {to} the classical {%
Mazur} theorem). Therefore we can write
\begin{equation}
u^{n}=\sum_{t\in T}\lambda _{t}^{n}a_{t}+\sum_{t\in D}\mu _{t}^{n}\left(
-a_{t}\right)   \label{eq_u^r}
\end{equation}%
where, for all $n\in \mathbb{N},$ $\lambda _{t}^{n}\geq 0$ for $t\in T,$ $%
\mu _{t}^{n}\geq 0$ for $t\in D,$ $\sum_{t\in T}\lambda _{t}^{n}+\sum_{t\in
D}\mu _{t}^{n}=1,$ $\lambda _{t}^{n}>0$ for finitely many elements $t\in T$
verifying $\left\langle a_{t},x^{n}\right\rangle -{\bar{b}}_{t}=f_{D}\left(
x^{n}\right) ,$ and $-\left\langle a_{t},x^{n}\right\rangle +{\bar{b}}%
_{t}=f_{D}\left( x^{n}\right) $ whenever $\mu _{t}^{n}>0.$

Clearly, appealing to (\ref{eq_u^r}), each solution of the system
\begin{equation*}
\left\{ \left\langle a_{t},x\right\rangle \leq {\bar b}_{t},~t\in
T;~-\left\langle a_{t},x\right\rangle \leq -{\bar b}_{t},~t\in D\right\} ,
\end{equation*}%
i. e., each $x\in \mathcal{L}_{D}\left( {\bar b},-{\bar b}_{D}\right) ,$
and $\bar{x}$ in particular, satisfies the inequality
\begin{equation}
\left\langle u^n,x\right\rangle \leq \sum_{t\in T}\lambda _{t}^n{\bar b}%
_{t}+\sum_{t\in D}\mu _{t}^n\left( -{\bar b}_{t}\right) .
\label{eq_u^r consequence}
\end{equation}%
Accordingly, $\left\Vert x^n-\bar{x}\right\Vert $ is bounded from below
by the distance from $x^n$ to the solution set of (\ref{eq_u^r consequence}%
), which can be computed by means of the well-known Ascoli formula. In other
words,
\begin{equation}
\left\Vert x^n-\bar{x}\right\Vert \geq \frac{\sum_{t\in T}\lambda
_{t}^n\left( \left\langle a_{t},x^n\right\rangle -{\bar b}_{t}\right)
+\sum_{t\in D}\mu _{t}^n\left( -\left\langle a_{t},x^n\right\rangle +{\bar
b}_{t}\right) }{\left\Vert u^n\right\Vert _{\ast }}=\frac{f_{D}\left(
x^n\right) }{\left\Vert u^n\right\Vert _{\ast }},  \label{eq_x-xbar}
\end{equation}%
where the condition stated after formula (\ref{eq_u^r}) has been taken into
account.

Next we proceed with the construction of the aimed sequence $\left\{
b^{n}\right\} $ such that (\ref{eq_proof_fD}) holds. To do this, we appeal
to Urysohn's Lemma to obtain a certain $\varphi _{n}\in C\left( T,\left[ 0,1%
\right] \right) $ such that
\begin{equation*}
\varphi _{n}\left( t\right) =\left\{
\begin{tabular}{ll}
$0$ & if $t\in D,$ \\
$1$ & if $\left\langle a_{t},x^{n}\right\rangle -{\bar{b}}_{t}\leq -\left(
1+\frac{1}{n}\right) f_{D}\left( x^{n}\right) .$%
\end{tabular}%
\right.
\end{equation*}%
Certainly sets $D$ and $\left\{ t\in T\mid \left\langle
a_{t},x^{n}\right\rangle -{\bar{b}}_{t}\leq -\left( 1+\frac{1}{n}\right)
f_{D}\left( x^{n}\right) \right\} $ are closed disjoint sets in $T,$
recalling the definition of $f_{D}\left( x^{n}\right) $ and the fact that $%
f_{D}\left( x^{n}\right) >0.$

Let us define, for each $t\in T,$
\begin{equation*}
b_{t}^n=\left( 1-\varphi _n\left( t\right) \right) \left\langle
a_{t},x^n\right\rangle +\varphi _n\left( t\right) \left( {\bar b}%
_{t}+f_{D}\left( x^n\right) \right) .
\end{equation*}%
Then we can easily check that $x^n\in \mathcal{S}_{{\bar c}}\left(
b^n\right) ,$ since $x^n\in \mathcal{F}\left( b^n\right) $ and $D\subset
\mathcal{K}_{b^n}\left( x^n\right) ,$ so that the KKT conditions hold.

Finally, let us observe that $\left\vert b_{t}^n-{\bar b}_{t}\right\vert
\leq f_{D}\left( x^n\right) $ when $\varphi _n\left( t\right) =1$ and
\begin{equation*}
-\left( 1+\frac{1}n\right) f_{D}\left( x^n\right) <a_{t}^{\prime }x^n-{\
\bar b}_{t}\leq f_{D}\left( x^n\right)
\end{equation*}%
when $\varphi _n\left( t\right) <1.$ Accordingly,
\begin{equation}
\left\Vert b^n-{\bar b}\right\Vert _{\infty }\leq \left( 1+\frac{1}%
n\right) f_{D}\left( x^n\right) ,  \label{eq_br-bbar}
\end{equation}%
which, together with (\ref{eq_x-xbar}), entails (\ref{eq_proof_fD}).
\end{proof}
\begin{rem}
\emph{The role of the uniqueness assumption} $\mathcal{S}\left( {\bar{c}},{%
\bar{b}}\right) =\left\{ \bar{x}\right\} $ \emph{consists in ensuring}
$\mathcal{L}_{D}\left( {\bar{b}},-{\bar{b}}_{D}\right) \supset \mathcal{S%
}\left( {\bar{c}},{\bar{b}}\right) $ \emph{for all} $D\in \mathcal{K}_{{%
\bar{b}}}\left( \bar{x}\right) ,$ \emph{which is the fact underlying (%
\ref{eq_x-xbar}).}
\end{rem}

\section{Calmness modulus for finitely constrained programs}

In this section, we show that, in the case when $T$ is finite, the lower
bound on $\mathrm{clm}\mathcal{S}\left( \left( {\bar{c}},{\bar{b}}\right) ,%
\bar{x}\right) $ provided in the previous section is also an upper
bound, without requiring neither the Slater{\ condition} nor the uniqueness
of $\bar{x}$ as an optimal solution of $P\left( {\bar{c}},{\bar{b}}%
\right) .$

\begin{theo}
\label{Th-upperboundlim}Assume that $T$ is finite and let $\left( (\bar{%
c},{\bar{b}}),\bar{x}\right) \in \mathrm{gph}\mathcal{S}$. Then%
\begin{equation*}
\mathrm{clm}\mathcal{S}(({\bar{c}},{\bar{b}}),\bar{x})=\mathrm{clm}%
\mathcal{S}_{{\bar{c}}}({\bar{b}},\bar{x})\leq \sup_{D\in \mathcal{K}%
\left( \bar{x}\right) }\limsup_{\substack{ x\rightarrow \bar{x} \\ %
f_{D}\left( x\right) >0}}\frac{1}{d_{\ast }\left( 0_{p},\partial f_{D}\left(
x\right) \right) }.
\end{equation*}
\end{theo}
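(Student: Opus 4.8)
The plan is to reduce everything to a statement about the level-set mapping $\mathcal{L}$ and about each auxiliary mapping $\mathcal{L}_D$, and then exploit the fact that when $T$ is finite the calmness moduli of all these polyhedral mappings are computable via the Ascoli formula. First I would record the equality $\mathrm{clm}\,\mathcal{S}(({\bar c},{\bar b}),\bar x)=\mathrm{clm}\,\mathcal{S}_{\bar c}({\bar b},\bar x)$: when $T$ is finite the argmin mapping $\mathcal{S}$ has a graph that is a finite union of polyhedra, and a standard perturbation argument (replacing a perturbation of $(c,b)$ by one of $b$ alone, absorbing the change in $c$ into the subgradient/KKT description at the nearby optimal solution) shows that perturbing $c$ as well cannot increase the modulus; this is essentially the finite-$T$ version of what is quoted around Theorem~\ref{Th-upperboundlim} and can be cited from \cite{CHPT12}, \cite{Robinson}. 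So the whole burden is the inequality $\mathrm{clm}\,\mathcal{S}_{\bar c}({\bar b},\bar x)\le \sup_{D\in\mathcal{K}(\bar x)}\limsup_{x\to\bar x,\ f_D(x)>0} d_\ast(0_p,\partial f_D(x))^{-1}$.

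For that inequality I would use the metric-subregularity form \eqref{eq_msubreg} of the calmness modulus: $\mathrm{clm}\,\mathcal{S}_{\bar c}({\bar b},\bar x)=\limsup_{x\to\bar x} d(x,\mathcal{S}_{\bar c}({\bar b}))/d_\infty({\bar b},\mathcal{S}_{\bar c}^{-1}(x))$. Pick a sequence $x^n\to\bar x$, $x^n\notin\mathcal{S}_{\bar c}({\bar b})$, realizing the $\limsup$. The key structural point, valid because $T$ is finite, is that for any $b$ near $\bar b$ the condition $x\in\mathcal{S}_{\bar c}(b)$ is equivalent to the KKT condition $x\in\mathcal{F}(b)$ together with $-\bar c\in\mathrm{cone}\{a_t:\langle a_t,x\rangle=b_t\}$, and by Carath\'eodory one may take the generating active set $D$ with $|D|\le p$, i.e.\ $D\in\mathcal{K}_b(x)$. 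Moreover for $b$ close to $\bar b$ and $x$ close to $\bar x$ every such $D$ lies in $T_{\bar b}(\bar x)$ (active constraints are stable for finite $T$) and $-\bar c\in\mathrm{cone}\{a_t:t\in D\}$, hence $D\in\mathcal{K}(\bar x):=\mathcal{K}_{\bar b}(\bar x)$. Consequently $\mathcal{S}_{\bar c}^{-1}(x)\subset\bigcup_{D\in\mathcal{K}(\bar x)}\mathcal{L}_D^{-1}(x)$ (reading the constraint $-\langle a_t,x\rangle\le d_t$ for $t\in D$ with $d=-b_D$, which for the KKT set forces $\langle a_t,x\rangle=b_t$), and therefore $d_\infty({\bar b},\mathcal{S}_{\bar c}^{-1}(x))\ge\min_{D\in\mathcal{K}(\bar x)} d(({\bar b},-{\bar b}_D),\mathcal{L}_D^{-1}(x))$ after the bookkeeping identifying the $d$-coordinate. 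On the numerator side, $\mathcal{S}_{\bar c}({\bar b})=[\bar f\le 0]\supset\mathcal{L}_D({\bar b},-{\bar b}_D)=[f_D=0]$, so $d(x,\mathcal{S}_{\bar c}({\bar b}))\le d(x,\mathcal{L}_D({\bar b},-{\bar b}_D))$ for every $D$. Combining, along the chosen sequence, for a suitable $D=D_n$ attaining the minimum (and by finiteness of $\mathcal{K}(\bar x)$ one $D$ recurs infinitely often), one gets
\[
\frac{d(x^n,\mathcal{S}_{\bar c}({\bar b}))}{d_\infty({\bar b},\mathcal{S}_{\bar c}^{-1}(x^n))}\le\frac{d(x^n,\mathcal{L}_D({\bar b},-{\bar b}_D))}{d(({\bar b},-{\bar b}_D),\mathcal{L}_D^{-1}(x^n))},
\]
so that $\mathrm{clm}\,\mathcal{S}_{\bar c}({\bar b},\bar x)\le\sup_{D\in\mathcal{K}(\bar x)}\mathrm{clm}\,\mathcal{L}_D(({\bar b},-{\bar b}_D),\bar x)$, and the right-hand side is exactly the claimed supremum by the chain of equalities \eqref{2001,5}, which rests on \cite[Theorem 1]{KNT10}. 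I would need $f_D(x^n)>0$ along the relevant subsequence, which holds because $x^n\notin\mathcal{S}_{\bar c}({\bar b})\supset[f_D=0]$ forces $f_D(x^n)\neq 0$, and $f_D\ge 0$ wherever $\bar x$ is feasible-active isn't automatic, so a small argument using $f_D(\bar x)=0$ and continuity is required to ensure we are picking up $d_\ast(0_p,\partial f_D(x^n))$ on the branch where $f_D>0$.

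The main obstacle I anticipate is the precise argument that $\mathcal{S}_{\bar c}^{-1}(x)$ is contained in the union of the $\mathcal{L}_D^{-1}(x)$ over $D\in\mathcal{K}(\bar x)$ \emph{locally uniformly}, i.e.\ that the Carath\'eodory set $D$ produced for a given near-optimal pair $(b,x)$ can always be chosen inside the fixed finite family $\mathcal{K}(\bar x)$ determined by the nominal data. This is where finiteness of $T$ is essential: one needs stability of the active index set, $T_b(x)\subset T_{\bar b}(\bar x)$ for $(b,x)$ near $({\bar b},\bar x)$, which fails in the semi-infinite setting and is exactly why Theorem~\ref{Th-upperboundlim} is restricted to finite $T$. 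Once this inclusion is established, the rest is a routine comparison of Ascoli-type distance quotients together with the identities \eqref{2001,5} and \eqref{eq_positive_part}; I would also double-check the edge case $\mathcal{K}(\bar x)=\varnothing$, which under $({\bar c},{\bar b},\bar x)\in\mathrm{gph}\,\mathcal{S}$ cannot occur since the KKT set always contains some $D$ with $|D|\le p$ by Carath\'eodory applied to $-\bar c\in A_{\bar b}(\bar x)$.
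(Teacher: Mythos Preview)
Your approach is essentially the paper's, reorganized: both arguments hinge on the same device --- for a realizing sequence, use KKT plus Carath\'eodory to produce index sets $D_n$ with $|D_n|\le p$ and $-c^n\in\mathrm{cone}\{a_t:t\in D_n\}$, extract a constant $D$ by finiteness of $T$, observe that $D\in\mathcal{K}_{\bar b}(\bar x)$ (by passing $\langle a_t,x^n\rangle=b_t^n$ and $-c^n\in\mathrm{cone}\{a_t:t\in D\}$ to the limit), and then compare with $\mathrm{clm}\,\mathcal{L}_D$ through~\eqref{2001} and~\eqref{2001,5}. The paper works with the calmness form \eqref{formulae clm} (a sequence in $\mathrm{gph}\,\mathcal{S}$), whereas you work with the subregularity form; this is a cosmetic difference.

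Two remarks. First, the equality $\mathrm{clm}\,\mathcal{S}((\bar c,\bar b),\bar x)=\mathrm{clm}\,\mathcal{S}_{\bar c}(\bar b,\bar x)$ is \emph{not} citable from \cite{CHPT12} or \cite{Robinson}: those give only the qualitative calmness, not the equality of moduli. The paper proves this equality here, as a byproduct of the same KKT extraction: once $D_n=D$ is constant with $-\bar c\in\mathrm{cone}\{a_t:t\in D\}$ and $D\subset T_{b^n}(x^n)$, one has $x^n\in\mathcal{S}_{\bar c}(b^n)$, and then $\|(c^n,b^n)-(\bar c,\bar b)\|\ge\|b^n-\bar b\|_\infty$ yields $\mathrm{clm}\,\mathcal{S}\le\mathrm{clm}\,\mathcal{S}_{\bar c}$. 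Your verbal description (``absorbing the change in $c$ into the KKT description'') is exactly this argument; just do not outsource it. Second, two of your anticipated obstacles dissolve: you do not need $T_b(x)\subset T_{\bar b}(\bar x)$ in advance, since $D\subset T_{\bar b}(\bar x)$ follows directly by taking limits in $\langle a_t,x^n\rangle=b_t^n$ for $t\in D$; and you do not need to verify $f_D(x^n)>0$ along your particular sequence, because the comparison ends at $\mathrm{clm}\,\mathcal{L}_D((\bar b,-\bar b_D),\bar x)$, which \eqref{2001,5} then rewrites as the stated $\limsup$.
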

\begin{proof}
Let us write
\begin{equation}
\mathrm{clm}\mathcal{S}(({\bar{c}},{\bar{b}}),\bar{x}%
)=\lim_{n\rightarrow \infty }\frac{d\left( x^{n},\mathcal{S}({\bar{c}},{\bar{%
b}})\right) }{\left\Vert \left( c^{n},b^{n}\right) -({\bar{c}},{\bar{b}}%
)\right\Vert }  \label{2002}
\end{equation}%
with $\left( c^{n},b^{n}\right) \rightarrow ({\bar{c}},{\bar{b}})$, $\left(
c^{n},b^{n}\right) \neq (\bar{c},\bar{b})$ and $\mathcal{S}(c^{n},b^{n})\ni
x^{n}\rightarrow \bar{x}.$ Since, in ordinary linear programming,
optimality is equivalent to the KKT conditions (without requiring the Slater
{condition), for each} $n\in \mathbb{N}$, there exists $D_{n}\subset
T_{b^{n}}\left( x^{n}\right) $ with $\left\vert D_{n}\right\vert \leq p$
(because of Carath\'{e}odory's theorem) such that
\begin{equation}
-c^{n}\in \mathrm{cone}\left\{ a_{t},\text{ }t\in D_{n}\right\} .
\label{2005}
\end{equation}%
The finiteness of $T$ entails that the sequence $\left\{ D_{n}\right\}
_{n\in \mathbb{N}}$ of subsets of $T$ must contain a constant subsequence.
Without loss of generality, we may write
\begin{equation*}
D_{n}=D
\end{equation*}%
for all $n\in \mathbb{N}$. Hence, (\ref{2005}) reads as $-c^{n}\in \mathrm{%
cone}\left\{ a_{t},\text{ }t\in D\right\} $ (which is a closed cone) for all
$n\in \mathbb{N}$. Accordingly,
\begin{equation}
-{\bar{c}}\in \mathrm{cone}\left\{ a_{t},\text{ }t\in D\right\} .
\label{2010}
\end{equation}%
Moreover, $D\subset T_{b^{n}}\left( x^{n}\right) ,$ and hence, $\left\langle
a_{t},x^{n}\right\rangle =b_{t}^{n}$ for all $t\in D,$ clearly implies $%
D\subset T_{{\bar{b}}}\left( \bar{x}\right) .$ This, together with (\ref%
{2010}), entails $D\in \mathcal{K}_{{\bar{b}}}\left( \bar{x}\right) .$

In addition, recalling $D_{n}=D\subset T_{b^{n}}\left( x^{n}\right) ,$ (\ref%
{2010}) yields, for all $n,$%
\begin{equation*}
x^{n}\in \mathcal{S}_{{\bar{c}}}\left( b^{n}\right) .
\end{equation*}%
{\ Therefore (\ref{2002}) and the obvious fact that
\begin{equation*}
\left\Vert \left( c^{n},b^{n}\right) -({\bar{c}},{\bar{b}})\right\Vert \geq
\left\Vert b^{n}-{\bar{b}}\right\Vert _{\infty }
\end{equation*}%
} entail
\begin{equation*}
\mathrm{clm}\mathcal{S}(({\bar{c}},{\bar{b}}),\bar{x})\leq \underset{%
n\rightarrow \infty }{\lim \sup }\frac{d\left( x^{n},\mathcal{S}({\bar{c}},{%
\bar{b}})\right) }{\left\Vert b^{n}-{\bar{b}})\right\Vert _{\infty }}\leq
\mathrm{clm}\mathcal{S}_{{\bar{c}}}({\bar{b}},\bar{x}).
\end{equation*}%
Since, obviously from the definitions, $\mathrm{clm}\mathcal{S}_{{\bar{c}}}({%
\bar{b}},\bar{x})\leq \mathrm{clm}\mathcal{S}(({\bar{c}},{\bar{b}}),%
\bar{x}),$ the previous upper limit must be an ordinary limit and we
can write indeed
\begin{equation*}
\mathrm{clm}\mathcal{S}(({\bar{c}},{\bar{b}}),\bar{x})=\mathrm{clm}%
\mathcal{S}_{{\bar{c}}}({\bar{b}},\bar{x})=\lim_{n\rightarrow \infty }%
\frac{d\left( x^{n},\mathcal{S}({\bar{c}},{\bar{b}})\right) }{\left\Vert
b^{n}-{\bar{b}}\right\Vert _{\infty }}.
\end{equation*}

Finally, it is easy to see that $D\subset T_{b^{n}}\left( x^{n}\right) $
translates into $x^{n}\in \mathcal{L}_{D}\left( b^{n},-b_{D}^{n}\right) .$ {%
To} finish the proof, just observe that, from (\ref{2001}), (\ref{2001,5}),
and the obvious fact that $\left\Vert b^{n}-{\bar{b}}\right\Vert _{\infty
}=\left\Vert \left( b^{n},-b_{D}^{n}\right) -\left( {\bar{b}},-{\bar{b}}%
_{D}\right) \right\Vert _{\infty },$ we have \sloppy%
\begin{eqnarray*}
\lim_{n\rightarrow \infty }\frac{d\left( x^{n},\mathcal{S}({\bar{c}},{\bar{b}%
})\right) }{\left\Vert b^{n}-{\bar{b}}\right\Vert _{\infty }} &\leq &%
\underset{n\rightarrow \infty }{\lim \sup }\frac{d\left( x^{n},\mathcal{L}%
_{D}({\bar{b}},-{\bar{b}}_{D})\right) }{\left\Vert \left(
b^{n},-b_{D}^{n}\right) -\left( {\bar{b}},-{\bar{b}}_{D}\right) \right\Vert
_{\infty }} \\
&\leq &\mathrm{clm}\mathcal{L}_{D}\left( \left( {\bar{b}},-{\bar{b}}%
_{D}\right) ,\bar{x}\right) =\limsup_{\substack{ x\rightarrow \bar{%
x} \\ f_{D}\left( x\right) >0}}\frac{1}{d_{\ast }\left( {0}_{p},\partial
f_{D}\left( x\right) \right) }.
\end{eqnarray*}
\end{proof}
\begin{cor}
\label{cor1}If $T$ is finite and $\mathcal{S}({\bar{c}},{\bar{b}})=\left\{
\bar{x}\right\} ,$ then%
\begin{equation*}
\mathrm{clm}\mathcal{S}(({\bar{c}},{\bar{b}}),\bar{x})=\mathrm{clm}%
\mathcal{S}_{{\bar{c}}}({\bar{b}},\bar{x})=\sup_{D\in \mathcal{K}_{\bar{%
b}}\left( \bar{x}\right) }\limsup_{\substack{ x\rightarrow \bar{x}
\\ f_{D}\left( x\right) >0}}\frac{1}{d_{\ast }\left( {0}_{p},\partial
f_{D}\left( x\right) \right) }.
\end{equation*}
\end{cor}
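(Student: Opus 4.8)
The plan is to combine the two bracketing inequalities already established in Theorems \ref{Th_lowerbound} and \ref{Th-upperboundlim}. Since $T$ is finite, the Slater condition is not needed for the upper estimate (Theorem \ref{Th-upperboundlim} was proved without it), so under the single extra hypothesis $\mathcal{S}({\bar{c}},{\bar{b}})=\{\bar{x}\}$ we would like to invoke Theorem \ref{Th_lowerbound} for the reverse inequality. The only gap is that Theorem \ref{Th_lowerbound} is stated under the Slater condition. So the first step is to argue that, when $T$ is finite, the uniqueness assumption $\mathcal{S}({\bar{c}},{\bar{b}})=\{\bar{x}\}$ already forces the Slater condition to hold at $P({\bar{c}},{\bar{b}})$. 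Indeed, a finitely constrained linear program with a bounded, nonempty, single-point solution set has a bounded optimal face reduced to a vertex; if the feasible set had empty interior then $-\bar c$ would lie in a proper face of the active cone and one could perturb $\bar c$ slightly to destroy uniqueness, contradicting local single-valuedness — more directly, a finitely constrained consistent system either satisfies Slater or contains an implicit equality, and in the latter case the solution set of $P({\bar c},{\bar b})$ would contain a whole affine subspace of positive dimension translated along the recession directions of that equality subsystem whenever $\bar x$ is optimal, contradicting $\mathcal{S}({\bar c},{\bar b})=\{\bar x\}$. I would spell out this elementary linear-programming fact (or cite \cite{libro}) in one or two lines.

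With the Slater condition in hand, Theorem \ref{Th_lowerbound} gives
\begin{equation*}
\mathrm{clm}\mathcal{S}\left( \left( {\bar{c}},{\bar{b}}\right) ,\bar{x}\right) \geq \mathrm{clm}\mathcal{S}_{{\bar{c}}}\left( {\bar{b}},\bar{x}\right) \geq \sup_{D\in \mathcal{K}_{{\bar{b}}}\left( \bar{x}\right) }\limsup_{\substack{ x\rightarrow \bar{x} \\ f_{D}\left( x\right) >0}}\frac{1}{d_{\ast }\left( {0}_{p},\partial f_{D}\left( x\right) \right) },
\end{equation*}
while Theorem \ref{Th-upperboundlim} gives the opposite chain
\begin{equation*}
\mathrm{clm}\mathcal{S}(({\bar{c}},{\bar{b}}),\bar{x})=\mathrm{clm}\mathcal{S}_{{\bar{c}}}({\bar{b}},\bar{x})\leq \sup_{D\in \mathcal{K}_{\bar{b}}\left( \bar{x}\right) }\limsup_{\substack{ x\rightarrow \bar{x} \\ f_{D}\left( x\right) >0}}\frac{1}{d_{\ast }\left( 0_{p},\partial f_{D}\left( x\right) \right) }.
\end{equation*}
Reading the two displays together, every inequality must in fact be an equality, which yields the asserted formula for both $\mathrm{clm}\mathcal{S}(({\bar{c}},{\bar{b}}),\bar{x})$ and $\mathrm{clm}\mathcal{S}_{{\bar{c}}}({\bar{b}},\bar{x})$ simultaneously.

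The main obstacle — and really the only non-bookkeeping point — is the passage from "unique solution" to "Slater condition" in the finitely constrained case needed to legitimately apply Theorem \ref{Th_lowerbound}; once that is dispatched the corollary is a one-line sandwich. An alternative, which avoids the issue entirely, would be to observe that Theorem \ref{Th-upperboundlim} already contains the hard direction ($\leq$) with no Slater assumption at all, and then to re-derive the reverse inequality ($\geq$) directly in the finite setting by revisiting the construction in the proof of Theorem \ref{Th_lowerbound}: with $T$ finite, Urysohn's Lemma is unnecessary — one simply sets $b_t^n = \langle a_t, x^n\rangle$ for $t$ in the "far" index set and $b_t^n = \bar b_t$ for $t\in D$ (with the obvious interpolation elsewhere), obtaining $\|b^n-\bar b\|_\infty \le f_D(x^n)$ and $x^n\in\mathcal{S}_{\bar c}(b^n)$, which already gives $\mathrm{clm}\,\mathcal{S}_{\bar c}(\bar b,\bar x)\ge \limsup 1/d_\ast(0_p,\partial f_D(x^n))$ without Slater, the uniqueness hypothesis being used, exactly as in the Remark following Theorem \ref{Th_lowerbound}, only to guarantee $\mathcal{L}_D(\bar b,-\bar b_D)\supset\mathcal{S}(\bar c,\bar b)$. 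I would present the short sandwich argument as the main proof and relegate the "unique $\Rightarrow$ Slater" verification (or the direct re-derivation) to a parenthetical remark.
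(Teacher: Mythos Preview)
Your overall sandwich strategy is the same as the paper's: combine Theorem~\ref{Th_lowerbound} with Theorem~\ref{Th-upperboundlim}. The issue is how you handle the Slater hypothesis of Theorem~\ref{Th_lowerbound}.

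Your primary route---proving that $\mathcal{S}(\bar c,\bar b)=\{\bar x\}$ forces the Slater condition when $T$ is finite---is simply false. Take $p=1$, $\bar c=1$, and the two constraints $x\le 0$, $-x\le 0$: the feasible set is $\{0\}$, the optimal solution is unique, yet Slater fails. More generally, an implicit equality in the constraint system does \emph{not} create recession directions of the optimal set; it merely collapses the feasible set onto a lower-dimensional affine subspace, on which the restricted problem can perfectly well have a unique minimizer. So the argument you sketch (``the solution set would contain a whole affine subspace of positive dimension'') does not go through, and you should drop this line entirely.

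Your alternative route---re-running the construction of Theorem~\ref{Th_lowerbound} in the finite case without Urysohn---does work, but it is more than is needed. The paper's observation is sharper: if you inspect the proof of Theorem~\ref{Th_lowerbound}, the Slater condition is invoked \emph{only} at the very beginning, to guarantee that $\mathcal{K}_{\bar b}(\bar x)\neq\emptyset$ (i.e., that some KKT index set $D$ exists). Once a $D\in\mathcal{K}_{\bar b}(\bar x)$ is fixed, the remainder of the proof uses only the uniqueness assumption (via the inclusion $\mathcal{L}_D(\bar b,-\bar b_D)\supset\mathcal{S}(\bar c,\bar b)$, as you correctly note). But when $T$ is finite, optimality at $\bar x$ is equivalent to the KKT conditions without any constraint qualification---this is elementary linear programming, and is exactly the fact used at the start of the proof of Theorem~\ref{Th-upperboundlim}. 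Hence $\mathcal{K}_{\bar b}(\bar x)\neq\emptyset$ automatically, and Theorem~\ref{Th_lowerbound} applies verbatim. This is the one-line patch the paper uses; your re-derivation reaches the same conclusion but at the cost of redoing a construction that in fact needs no modification.
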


\begin{proof}
The result comes straightforwardly from Theorems \ref{Th_lowerbound} and \ref%
{Th-upperboundlim}. The only detail to be taken into account is that the
Slater condition in Theorem~\ref{Th_lowerbound} is only needed at the very
beginning of the proof to guarantee the existence of $D\in \mathcal{K}_{{%
\bar{b}}}\left( \bar{x}\right) .$ When $T$ is finite, we are dealing
with ordinary linear programming problems and, as pointed out at the
beginning of the proof of Theorem \ref{Th-upperboundlim}, optimality implies
the KKT conditions without requiring the Slater{\ condition}.
\end{proof}

\section{Upper bounds on the calmness modulus}

In this section, we follow various approaches to getting upper estimates of $%
\mathrm{clm}\mathcal{S}(({\bar{c}},{\bar{b}}),\bar{x})$ for a given
$\left( ({\bar{c}},{\bar{b}}),\bar{x}\right) \in \mathrm{gph}%
\mathcal{S}.$ In {Subsection 5.1}, $\mathrm{clm}\mathcal{S}(({\bar{c}},{\
\bar{b}}),\bar{x})$ is related to $\mathrm{clm}\mathcal{L}%
((\left\langle {\bar{c}},\bar{x}\right\rangle ,{\bar{b}}),\bar{%
x})$, see {Theorem~\ref{neige}} for semi-infinite programs without requiring
the uniqueness of $\bar{x}$ as an optimal solution of $P({\bar{c}},{\
\bar{b}}).$ On the other hand, {\ Subsection 5.2} is concerned with finitely
constrained problems with unique optimal solutions. The main result of this
subsection, Theorem \ref{Th upper bound}, reminds of Theorem \ref{Th_cotainf}
(which deals with the Lipschitz modulus), although the main perturbation
ideas underlying the latter cannot be transferred to the calmness modulus,
as we try to illustrate in the next section (and particularly in Example \ref%
{Exa not attained}).

\subsection{A level set approach}

In this subsection, we consider a given $\left( ({\bar{c}},{\bar{b}}),%
\bar{x}\right) \in \mathrm{gph}\mathcal{S}$ and assume that $P({\bar{c}}%
,{\bar{b}})$ satisfies the Slater{\ condition}. Let us recall the 
level set mapping $\mathcal{L}$ defined in (\ref{eq_levelset}). In order to
obtain an upper bound 
on $\mathrm{clm}\mathcal{S}(({\bar{c}},{\bar{b}}),\bar{x})$ in terms of
$\mathrm{clm}\mathcal{L}((\left\langle \bar{c},\bar{x}%
\right\rangle ,{\bar{b}}),\bar{x}),$ first we need a Gauvin-type
refinement of the KKT conditions (Proposition \ref{Prop_CLFM}). This is done
in Lemma~\ref{Lem Gauvin}. Before that, we need some additional notation.

For any $\left( b,x\right) \in \mathrm{gph}\mathcal{F},$ let
\begin{equation*}
C_{b}\left( x\right) :=\mathrm{co}\left\{ a_{t}\mid t\in T_{b}\left(
x\right) \right\} .
\end{equation*}%
The compactness of $T$ and $T_{{\bar b}}\left( x\right) $ for any $x\in
\mathcal{F}\left( {\bar b}\right) $ easily entails the following
equivalences.

\begin{rem}
The following conditions are equivalent:

$(i)$ $P(\bar{c}, \bar{b})$ satisfies the Slater\ condition;

$(ii)$ There exists $x^{0}\in \mathcal{F}\left( \bar{b}\right) $
and some associated $z\in \mathbb{R}^{p},$ $\Vert z\Vert =1$ 
satisfying   $a_{t}^{\prime }z>0$  for all  $t\in T_{{\bar{b}}}\left(
x^{0}\right) $;

$(iii)$ For each $x\in \mathcal{F}\left( {\bar{b}}\right) $,
\begin{equation}
\alpha _{{\bar{b}}}\left( x\right) :=\sup_{\left\Vert z\right\Vert
=1}\inf_{t\in T_{{\bar{b}}}\left( x\right) }a_{t}^{\prime }z>0.
\label{eq_alpha}
\end{equation}
\end{rem}
The proof of the equivalence between (i) and (ii) can be found in \cite[%
Theorem~7.2]{libro}.
Alternatively we can write%
\begin{equation*}
\alpha _{{\bar{b}}}\left( x\right) =\sup \left\{ \alpha >0\mid \left( \alpha
,z\right) \in \mathcal{G}_{{\bar{b}}}\left( x\right) \text{ for some }z\in
\mathbb{S}\right\} ,
\end{equation*}%
where $\mathbb{S}$ denotes the unit sphere in $\mathbb{R}^{p}$ and
\begin{equation*}
\mathcal{G}_{{\bar{b}}}\left( x\right) :=\left\{ \left( \alpha ,z\right) \in
\mathbb{R\times S}\mid \alpha >0\text{, }a_{t}^{\prime }z>\alpha ,\text{ }%
\forall t\in T_{{\bar{b}}}\left( x\right) \right\} .
\end{equation*}%
Let us define%
\begin{equation}
\bar{\lambda }:=\displaystyle{\min }_{\left( \alpha ,z\right) \in
\mathcal{G}_{{\bar{b}}}\left( \bar{x}\right) }\frac{-{\bar{c}}^{\prime
}z}{\alpha },  \label{eq_lambda}
\end{equation}%
which obviously satisfies
\begin{equation*}
\bar{\lambda }\leq \frac{\left\Vert {\bar{c}}\right\Vert _{\ast }}{%
\alpha _{{\bar{b}}}\left( \bar{x}\right) }.
\end{equation*}
The following lemma shows that $\bar{\lambda }$ bounds the sum of any
KKT multiplier set at $\bar{x}\in \mathcal{S}\left( {\bar c},{\bar b%
}\right) .$

\begin{lem}
\label{Lem Gauvin}Assume that $P({\bar{c}},{\bar{b}})$ satisfies the Slater{%
\ condition}, and let $\bar{x}\in \mathcal{S}\left( {\bar{c}},{\bar{b}}%
\right) .$ Then,
\begin{equation*}
\emptyset \neq \left\{ \lambda \geq 0\mid -{\bar{c}}\in \lambda C_{{\bar{b}}%
}\left( \bar{x}\right) \right\} \subset \left[ 0,\bar{\lambda }%
\right] .
\end{equation*}
\end{lem}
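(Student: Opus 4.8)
The plan is to split the statement into two parts: the nonemptiness of the set $\{\lambda\ge 0\mid -\bar c\in\lambda C_{\bar b}(\bar x)\}$, and the upper bound $\lambda\le\bar\lambda$ for every such $\lambda$. The nonemptiness is essentially the KKT condition (\ref{KKTcond}) from Proposition~\ref{Prop_CLFM}: since $\bar x\in\mathcal S(\bar c,\bar b)$ and the Slater condition holds, we have $-\bar c\in A_{\bar b}(\bar x)=\mathrm{cone}\{a_t\mid t\in T_{\bar b}(\bar x)\}$. If $-\bar c={0}_p$ then $\lambda=0$ works (with the convention $0\cdot C_{\bar b}(\bar x)=\{{0}_p\}$), and if $-\bar c\neq{0}_p$ then writing $-\bar c=\sum_{t\in D}\gamma_t a_t$ with $\gamma_t>0$ on a finite subset $D\subset T_{\bar b}(\bar x)$ and setting $\lambda=\sum_{t\in D}\gamma_t>0$ exhibits $-\bar c\in\lambda\,C_{\bar b}(\bar x)$, since $\sum_{t\in D}(\gamma_t/\lambda)a_t$ is a convex combination of the $a_t$, $t\in T_{\bar b}(\bar x)$. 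Note that $T_{\bar b}(\bar x)\neq\varnothing$ here: otherwise $C_{\bar b}(\bar x)=\mathrm{co}(\varnothing)=\varnothing$ and $-\bar c\in A_{\bar b}(\bar x)=\{{0}_p\}$ would force the trivial case above, handled separately.

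For the upper bound, fix $\lambda\ge 0$ with $-\bar c\in\lambda C_{\bar b}(\bar x)$, so $-\bar c=\lambda\sum_{t\in D}\theta_t a_t$ for some finite $D\subset T_{\bar b}(\bar x)$, $\theta_t\ge 0$, $\sum_{t\in D}\theta_t=1$. The idea is to test this identity against a Slater-type direction. Take any $(\alpha,z)\in\mathcal G_{\bar b}(\bar x)$, i.e. $z\in\mathbb S$, $\alpha>0$, and $a_t'z>\alpha$ for all $t\in T_{\bar b}(\bar x)$; such a pair exists by the equivalence $(i)\Leftrightarrow(iii)$ in the Remark preceding Lemma~\ref{Lem Gauvin}, recalling $\alpha_{\bar b}(\bar x)>0$. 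Then
\begin{equation*}
-\bar c'z=\lambda\sum_{t\in D}\theta_t(a_t'z)\ge\lambda\sum_{t\in D}\theta_t\alpha=\lambda\alpha,
\end{equation*}
where we use $t\in D\subset T_{\bar b}(\bar x)$ so that each $a_t'z>\alpha$, together with $\theta_t\ge 0$ and $\sum\theta_t=1$. Hence $\lambda\le\frac{-\bar c'z}{\alpha}$ for every admissible $(\alpha,z)$, and taking the minimum over $\mathcal G_{\bar b}(\bar x)$ gives $\lambda\le\bar\lambda$, which is exactly the definition (\ref{eq_lambda}). One should also note that $-\bar c'z\ge\lambda\alpha\ge 0$, so the quotient defining $\bar\lambda$ is nonnegative and the minimum is well defined and attained (the set $\mathcal G_{\bar b}(\bar x)$, while open, has the infimum attained in the limit; alternatively one works with $\alpha=\alpha_{\bar b}(\bar x)$ and a maximizing $z$, or passes to the closure — a minor technical point).

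The main obstacle I anticipate is not the inequality chain itself, which is elementary, but justifying that the minimum in (\ref{eq_lambda}) is genuinely attained and that $\mathcal G_{\bar b}(\bar x)\neq\varnothing$, i.e. the interplay between the open condition $a_t'z>\alpha$ defining $\mathcal G_{\bar b}(\bar x)$ and the compactness of $T_{\bar b}(\bar x)$. This is handled by the stated equivalence: Slater gives $x^0$ and a direction $z$ with $a_t'z>0$ on $T_{\bar b}(x^0)$, and by compactness of $T_{\bar b}(\bar x)$ and continuity of $t\mapsto a_t$ the infimum $\inf_{t\in T_{\bar b}(\bar x)}a_t'z$ is attained and positive for a suitable $z$, so one can pick $\alpha$ strictly below it; the minimum over the (relatively compact, after closure) feasible set of the continuous quotient $(\alpha,z)\mapsto -\bar c'z/\alpha$ is then attained. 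Once this is in place, the bound $\lambda\le\bar\lambda$ follows uniformly over all KKT multipliers $\lambda$, completing the proof.
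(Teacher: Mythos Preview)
Your proof is correct and follows essentially the same approach as the paper: nonemptiness from the KKT conditions (Proposition~\ref{Prop_CLFM}), and the upper bound by testing the convex representation $-\bar c=\lambda\sum\theta_t a_t$ against an arbitrary $(\alpha,z)\in\mathcal{G}_{\bar b}(\bar x)$ to obtain $-\bar c'z\ge\lambda\alpha$, hence $\lambda\le -\bar c'z/\alpha$ for all such pairs. Your additional discussion about attainment of the minimum in~(\ref{eq_lambda}) is not actually needed for the lemma itself---once $\lambda\le -\bar c'z/\alpha$ holds for every $(\alpha,z)\in\mathcal{G}_{\bar b}(\bar x)$, the bound $\lambda\le\bar\lambda$ follows whether $\bar\lambda$ is an infimum or a minimum---so the paper simply omits it.
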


\begin{proof}
The KKT conditions (Proposition \ref{Prop_CLFM}) entail the non-emptiness of
the set in question. For any representation
\begin{equation*}
-{\bar{c}}=\lambda \sum\limits_{t\in T_{{ \bar{b}}}\left( \bar{x}%
\right) }\mu _{t}a_{t}\text{ }
\end{equation*}%
with $\lambda \geq 0,$ $\sum\limits_{t\in T_{{\bar{b}}}\left( \bar{x}%
\right) }\mu _{t}=1,$ $\mu _{t}$ nonnegative for all $t\in T_{{\bar{b}}%
}\left( \bar{x}\right) $ and only finitely many of them being positive,
one has, for all $\left( \alpha ,z\right) \in \mathcal{G}_{{ \bar{b}}%
}\left( \bar{x}\right) ,$
\begin{equation*}
-{\bar{c}}^{\prime }z=\lambda \sum\limits_{t\in T_{{\bar{b}}}\left(
\bar{x}\right) }\mu _{t}a_{t}^{\prime }z\geq \lambda \alpha .
\end{equation*}%
Consequently,%
\begin{equation*}
\lambda \leq \frac{-{\bar{c}}^{\prime }z}{\alpha },\text{ for all }\left(
\alpha ,z\right) \in \mathcal{G}_{{\bar{b}}}\left( \bar{x}\right) .
\end{equation*}
\end{proof}

The next lemma extends the previous one to a neighborhood of $\left( \left( {%
\bar c},{\bar b}\right) ,\bar{x}\right) $ relative to $\mathrm{gph}%
\mathcal{S}$.

\begin{lem}
\label{Lem_Gauvin2}Assume that $P({\bar{c}},{\bar{b}})$ satisfies the Slater{%
\ condition} and let $\bar{x}\in \mathcal{S}\left( {\bar{c}},{\bar{b}}%
\right) .$ Then, for any $\varepsilon >0$ there exist neighborhoods $U$ of $%
\bar{x}$ and $V$ of $\left( {\bar{c}},{\bar{b}}\right) $ such that, for
any $\left( c,b\right) \in V$ and $x\in \mathcal{S}\left( c,b\right) \cap U,$
it holds
\begin{equation}
-c\in \lambda C_{b}\left( x\right)   \label{eq_Gauvin}
\end{equation}%
for some $\lambda \in \lbrack 0,\bar{\lambda }+\varepsilon ).$
\end{lem}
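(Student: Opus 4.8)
The plan is to argue by contradiction, exploiting the compactness of $T$ together with the upper semicontinuity of the active-index set mapping and the continuity of the data $t\mapsto a_t$, $t\mapsto b_t$. Fix $\varepsilon>0$. Suppose no such neighborhoods exist. Then there are sequences $(c^n,b^n)\to(\bar c,\bar b)$ and $x^n\in\mathcal{S}(c^n,b^n)$ with $x^n\to\bar x$ such that every representation $-c^n=\lambda_n\sum_{t\in T_{b^n}(x^n)}\mu_t^n a_t$ (which exists by the KKT conditions of Proposition~\ref{Prop_CLFM}, valid because the Slater condition is stable under small perturbations) forces $\lambda_n\ge\bar\lambda+\varepsilon$. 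By Carath\'eodory's theorem we may take at most $p$ positive multipliers, so write $-c^n=\lambda_n\sum_{j=0}^{p}\mu_j^n a_{t_j^n}$ with $t_j^n\in T_{b^n}(x^n)$, $\mu_j^n\ge 0$, $\sum_j\mu_j^n=1$, and $\lambda_n\ge\bar\lambda+\varepsilon$.

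Next I would pass to subsequences. By compactness of $T$ each $t_j^n$ converges to some $\bar t_j\in T$, and by compactness of the simplex each $\mu_j^n$ converges to some $\mu_j\ge 0$ with $\sum_j\mu_j=1$. The continuity of $t\mapsto a_t$ gives $a_{t_j^n}\to a_{\bar t_j}$. The key step is to check that each limit index $\bar t_j$ lies in $T_{\bar b}(\bar x)$: from $\langle a_{t_j^n},x^n\rangle=b^n_{t_j^n}$, letting $n\to\infty$ and using the joint continuity of $(t,x)\mapsto\langle a_t,x\rangle$, the uniform convergence $b^n\to\bar b$, and continuity of $t\mapsto \bar b_t$, we get $\langle a_{\bar t_j},\bar x\rangle=\bar b_{\bar t_j}$, i.e. $\bar t_j\in T_{\bar b}(\bar x)$. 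The sequence $\{\lambda_n\}$ is bounded: indeed $\lambda_n=\big\|\lambda_n\sum_j\mu_j^n a_{t_j^n}\big\|_*/\big\|\sum_j\mu_j^n a_{t_j^n}\big\|_*=\|c^n\|_*/\big\|\sum_j\mu_j^n a_{t_j^n}\big\|_*$ whenever the denominator is bounded away from $0$, and one rules out the denominator tending to $0$ because that would force $c^n\to 0$ while $\lambda_n\to\infty$, contradicting $-\bar c\in A_{\bar b}(\bar x)$ with the Slater condition forcing a genuine KKT representation (alternatively, observe $\|c^n\|_*\to\|\bar c\|_*$ and use $\alpha_{\bar b}$-type lower bounds as in Lemma~\ref{Lem Gauvin}). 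Passing to the limit we obtain $-\bar c=\lambda\sum_j\mu_j a_{\bar t_j}$ with $\lambda=\lim\lambda_n\ge\bar\lambda+\varepsilon$ and all $\bar t_j\in T_{\bar b}(\bar x)$, hence $-\bar c\in\lambda\, C_{\bar b}(\bar x)$ with $\lambda\ge\bar\lambda+\varepsilon>\bar\lambda$, contradicting Lemma~\ref{Lem Gauvin}.

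The main obstacle I anticipate is controlling the multipliers $\lambda_n$ uniformly: a priori the denominator $\big\|\sum_j\mu_j^n a_{t_j^n}\big\|_*$ in the expression for $\lambda_n$ could degenerate to $0$ along a subsequence, which would both invalidate the identity $\lambda_n=\|c^n\|_*/\big\|\sum_j\mu_j^n a_{t_j^n}\big\|_*$ and a priori allow $\lambda_n\to\infty$. The clean way around this is to not fix the representation in advance but rather, at each $n$, choose the KKT representation that \emph{minimizes} $\lambda_n$ (which is legitimate since the set of admissible $\lambda$ is a nonempty closed subset of $[0,\infty)$ by Carath\'eodory and closedness of $C_{b^n}(x^n)$), so that $\lambda_n$ equals the minimal multiplier; then the limit argument produces a KKT representation of $-\bar c$ at $\bar x$ whose multiplier is $\lim\lambda_n$, and Lemma~\ref{Lem Gauvin} (applied to the nominal problem) caps this limit by $\bar\lambda$, contradicting $\lambda_n\ge\bar\lambda+\varepsilon$. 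A secondary, purely technical point is to verify that the Slater condition for $P(\bar c,\bar b)$ persists for $P(c,b)$ with $(c,b)$ near $(\bar c,\bar b)$, so that Proposition~\ref{Prop_CLFM} applies at the perturbed problems; this follows from the uniform convergence of $b^n$ to $\bar b$ and a standard compactness argument (a Slater point $\widehat x$ for the nominal problem satisfies $a_t'\widehat x<b_t$ for all $t$ with a uniform margin, by compactness of $T$, hence remains Slater for all nearby $b$).
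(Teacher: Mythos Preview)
Your argument follows essentially the same route as the paper's: proceed by contradiction, use the stability of the Slater condition to get KKT representations along a sequence $((c^n,b^n),x^n)\to((\bar c,\bar b),\bar x)$ with every admissible multiplier $\lambda_n\ge\bar\lambda+\varepsilon$, reduce via Carath\'eodory to finitely many active indices, pass to the limit using compactness of $T$ and of the simplex (the paper delegates this step to \cite[Lemmas~1 and~3]{CHPT12}), verify the limit indices lie in $T_{\bar b}(\bar x)$, and contradict Lemma~\ref{Lem Gauvin}.

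One point in your write-up is not quite right. In the sentence ``one rules out the denominator tending to $0$ because that would force $c^n\to 0$ while $\lambda_n\to\infty$'': if $\big\|\sum_j\mu_j^n a_{t_j^n}\big\|_*\to 0$ and $\|c^n\|_*\to\|\bar c\|_*$, then for $\bar c\neq 0$ one gets $\lambda_n\to\infty$ but certainly not $c^n\to 0$; so this clause does not furnish a contradiction. Your alternative, however, does: if $\lambda_n\to\infty$ along a subsequence, then $-c^n/\lambda_n\to 0$, and the same compactness/limit argument you already use shows $0\in C_{\bar b}(\bar x)$; but under the Slater condition there is $z$ with $a_t'z\ge\alpha_{\bar b}(\bar x)>0$ for all $t\in T_{\bar b}(\bar x)$, whence $u'z\ge\alpha_{\bar b}(\bar x)>0$ for every $u\in C_{\bar b}(\bar x)$, ruling out $0\in C_{\bar b}(\bar x)$. (Equivalently, and more directly, for $n$ large one has $a_t'z\ge\alpha_{\bar b}(\bar x)/2$ for all $t\in T_{b^n}(x^n)$, giving $\lambda_n\le 2\|c^n\|_*/\alpha_{\bar b}(\bar x)$.) Note also that choosing the \emph{minimal} $\lambda_n$ is harmless but not what secures boundedness; the Slater-based bound is doing the work.
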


\begin{proof}
First, observe that $P(c,b)$ still satisfies the Slater{\ condition} if $%
\left( c,b\right) $ is close enough to $\left( {\bar{c}},{\bar{b}}\right) ;$
so that the KKT conditions at any $\left( \left( c,b\right) ,x\right) \in
\mathrm{gph}\mathcal{S}$ with $\left( c,b\right) $ close enough to $\left(
\bar{c},{\bar{b}}\right) $ yield
\begin{equation*}
-c\in \lambda C_{b}\left( x\right) \text{ for some }\lambda \geq 0.
\end{equation*}
Reasoning by contradiction, assume the existence of $\varepsilon >0$ and a
sequence \\ $\left\{ \left( \left( c^n,b^n\right) ,x^n\right) \right\} _{n\in
\mathbb{N}}\subset \mathrm{gph}\mathcal{S}$ converging to $\left( \left( {\
\bar c},{\bar b}\right) ,\bar{x}\right) $ such that
\begin{equation*}
-c^n=\lambda ^n \sum\limits_{t\in T_{b^n}\left( x^n\right) }\mu _{t}^na_{t},
\end{equation*}%
with $\lambda ^n\geq 0,$ $
\sum\limits_{t\in T_{b^n}\left( x^n\right) }\mu _{t}^n=1,$ $\mu _{t}^n$
nonnegative for all $t\in T_{b^n}\left( x^n\right) $ and only finitely many
of them being positive, in such a way that
\begin{equation}
\lambda ^n\geq \bar{\lambda }+\varepsilon  \label{eq_1000}
\end{equation}%
for all $n\in \mathbb{N}.$

In this case, appealing to Carath\'{e}odory's Theorem and following the
lines of \cite[Lemma 3]{CHPT12} (see also \cite[Lemma 1]{CHPT12} and
references therein), we may assume, for an appropriate subsequence of $n$'s,
\begin{equation*}
\lambda ^{n}\rightarrow \lambda ,\text{ and }\sum\limits_{t\in
T_{b^{n}}\left( x^{n}\right) }\mu _{t}^{n}a_{t}\rightarrow \sum\limits_{t\in
T_{{\bar{b}}}\left( \bar{x}\right) }\mu _{t}a_{t}
\end{equation*}%
for some $\lambda \geq 0,$ $\mu _{t}$ nonnegative for all $t\in T_{\bar{%
b}}\left( \bar{x}\right) $ and only finitely many of them being positive%
$,$ such that $
\sum\limits_{t\in T_{{\bar{b}}}\left( \bar{x}\right) }\mu _{t}=1.$
Moreover, (\ref{eq_1000}) yields
\begin{equation*}
\lambda \geq \bar{\lambda }+\varepsilon .
\end{equation*}%
In this way, we attain 
a contradiction with Lemma \ref{Lem Gauvin}.
\end{proof}

The following theorem provides the aimed upper bound on $\mathrm{clm}%
\mathcal{S}(({\bar c},{\bar b}),\bar{x}).$

\begin{theo}
\label{neige} Assume that $P({\bar{c}},{\bar{b}})$ satisfies the Slater{\
condition} and let $\bar{x}\in \mathcal{S}\left( {\bar{c}},{\bar{b}}%
\right) .$ Then,%
\begin{equation*}
\mathrm{clm}\mathcal{S}(({\bar{c}},{\bar{b}}),\bar{x})\leq \max \left\{
\bar{\lambda },1\right\} \mathrm{clm}\mathcal{L}(({\bar{c}}^{\prime }%
\bar{x},{\bar{b}}),\bar{x}),
\end{equation*}%
where $\bar{\lambda }$ is defined in (\ref{eq_lambda}).
\end{theo}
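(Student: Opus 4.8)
The plan is to estimate the calmness modulus of $\mathcal{S}$ by relating it, near $\bar{x}$, to the calmness modulus of the level set mapping $\mathcal{L}$, using the Gauvin-type multiplier bound of Lemma~\ref{Lem_Gauvin2} as the crucial transfer tool. First I would invoke the limsup formula (\ref{formulae clm}) to produce a sequence $(c^n,b^n)\to(\bar{c},\bar{b})$ with $(c^n,b^n)\ne(\bar{c},\bar{b})$ and $x^n\in\mathcal{S}(c^n,b^n)$, $x^n\to\bar{x}$, realizing $\mathrm{clm}\mathcal{S}((\bar{c},\bar{b}),\bar{x})$ as the limit of the quotients $d(x^n,\mathcal{S}(\bar{c},\bar{b}))/\|(c^n,b^n)-(\bar{c},\bar{b})\|$. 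Since $P(\bar{c},\bar{b})$ satisfies Slater, so does $P(c^n,b^n)$ for $n$ large, and the KKT conditions hold at $(\,(c^n,b^n),x^n\,)$; by Lemma~\ref{Lem_Gauvin2}, fixing a small $\varepsilon>0$, for $n$ large we get $-c^n\in\lambda^n C_{b^n}(x^n)$ with $\lambda^n\in[0,\bar{\lambda}+\varepsilon)$, say $-c^n=\lambda^n\sum_{t\in T_{b^n}(x^n)}\mu_t^n a_t$ with the $\mu_t^n$ forming a (finitely supported) convex combination.

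The heart of the argument is to show that $x^n$ lies in $\mathcal{L}(\langle\bar{c},\bar{x}\rangle,b')$ for a suitable parameter $b'$ whose distance to $(\langle\bar{c},\bar{x}\rangle,\bar{b})$ is controlled by $\max\{\bar{\lambda}+\varepsilon,1\}\,\|(c^n,b^n)-(\bar{c},\bar{b})\|$. The constraint rows $a_t^{\prime}x\le b_t'$ can simply be taken as $b_t':=\max\{b_t^n, a_t^{\prime}x^n\}$ (or just $b^n$ itself if one prefers, since $x^n\in\mathcal{F}(b^n)$), so the hard part is the objective-type row $\bar{c}^{\prime}x\le\alpha'$. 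Here one estimates $\bar{c}^{\prime}x^n-\langle\bar{c},\bar{x}\rangle$. Writing $\bar{c}=c^n+(\bar{c}-c^n)$ and using $-c^n=\lambda^n\sum\mu_t^n a_t$ with $t\in T_{b^n}(x^n)$ (so $a_t^{\prime}x^n=b_t^n$), one gets
\[
\bar{c}^{\prime}x^n=(\bar{c}-c^n)^{\prime}x^n-\lambda^n\sum_{t}\mu_t^n b_t^n,
\]
while $\langle\bar{c},\bar{x}\rangle\ge (\bar{c}-c^n)^{\prime}\bar{x}-\lambda^n\sum_t\mu_t^n \bar{b}_t$ using $\bar{x}\in\mathcal{F}(\bar{b})$ together with $-c^n=\lambda^n\sum\mu_t^n a_t$ (the inequality goes the right way since the multipliers are nonnegative and $\langle\bar{c},\bar{x}\rangle=\langle c^n,\bar{x}\rangle+(\bar c - c^n)'\bar x\ge \langle c^n,\bar x\rangle$-type bookkeeping via $a_t^{\prime}\bar{x}\le\bar{b}_t$). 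Subtracting, the leading term is $\lambda^n\sum_t\mu_t^n(\bar{b}_t-b_t^n)\le\lambda^n\|b^n-\bar{b}\|_\infty\le(\bar{\lambda}+\varepsilon)\|(c^n,b^n)-(\bar{c},\bar{b})\|$, plus a term $(\bar{c}-c^n)^{\prime}(x^n-\bar{x})$ that is $o(\|(c^n,b^n)-(\bar{c},\bar{b})\|)$ since both factors tend to $0$; so $\bar{f}(x^n)\le(\bar\lambda+\varepsilon)\|b^n-\bar b\|_\infty+o(\cdot)$, and one may set $\alpha':=\langle\bar{c},\bar{x}\rangle+[\bar{c}^{\prime}x^n-\langle\bar{c},\bar{x}\rangle]_+$. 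Then $x^n\in\mathcal{L}(\alpha',b^n)$ and $\|(\alpha',b^n)-(\langle\bar{c},\bar{x}\rangle,\bar{b})\|\le\max\{\bar{\lambda}+\varepsilon,1\}\,\|(c^n,b^n)-(\bar{c},\bar{b})\|+o(\|(c^n,b^n)-(\bar{c},\bar{b})\|)$.

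Finally, since $\mathcal{L}(\langle\bar{c},\bar{x}\rangle,\bar{b})=\mathcal{S}(\bar{c},\bar{b})$ by (\ref{eq_sublevel}), we have $d(x^n,\mathcal{S}(\bar{c},\bar{b}))=d(x^n,\mathcal{L}(\langle\bar{c},\bar{x}\rangle,\bar{b}))$; combining with the metric-subregularity/calmness formula for $\mathcal{L}$ at $((\langle\bar{c},\bar{x}\rangle,\bar{b}),\bar{x})$ — which is calm by Theorem~\ref{Th_charact calmness} (equivalence $(i)\Leftrightarrow(iii)$), since $\mathcal{S}$ being calm is what we are measuring — one bounds
\[
\frac{d(x^n,\mathcal{S}(\bar{c},\bar{b}))}{\|(c^n,b^n)-(\bar{c},\bar{b})\|}
\le\frac{d(x^n,\mathcal{L}(\langle\bar{c},\bar{x}\rangle,\bar{b}))}{\|(\alpha',b^n)-(\langle\bar{c},\bar{x}\rangle,\bar{b})\|}\cdot\bigl(\max\{\bar{\lambda}+\varepsilon,1\}+o(1)\bigr).
\]
Passing to the limit in $n$ and using $d(x^n,\mathcal{L}(\langle\bar c,\bar x\rangle,\bar b))/\|(\alpha',b^n)-(\langle\bar c,\bar x\rangle,\bar b)\|\to$ a quantity $\le\mathrm{clm}\mathcal{L}((\bar{c}^{\prime}\bar{x},\bar{b}),\bar{x})$ (valid because $(\alpha',b^n)\to(\bar{c}^{\prime}\bar{x},\bar{b})$ and $x^n\to\bar{x}$), we obtain $\mathrm{clm}\mathcal{S}((\bar{c},\bar{b}),\bar{x})\le\max\{\bar{\lambda}+\varepsilon,1\}\,\mathrm{clm}\mathcal{L}((\bar{c}^{\prime}\bar{x},\bar{b}),\bar{x})$; letting $\varepsilon\downarrow0$ finishes the proof. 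The main obstacle is the bookkeeping in the objective-row estimate above: getting the sign of the multiplier contribution right and confirming that the cross term $(\bar{c}-c^n)^{\prime}(x^n-\bar{x})$ is genuinely negligible relative to $\|(c^n,b^n)-(\bar{c},\bar{b})\|$ (it is, being a product of two vanishing quantities, but one must be careful that the denominator does not itself vanish faster — which it cannot, since $d(x^n,\mathcal{S}(\bar c,\bar b))>0$ along the chosen sequence realizing a positive modulus, and in the degenerate case $\mathrm{clm}\mathcal{S}=0$ the inequality is trivial).
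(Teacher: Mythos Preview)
Your proposal is correct and follows essentially the same route as the paper: invoke Lemma~\ref{Lem_Gauvin2} to bound the KKT multipliers by $\bar\lambda+\varepsilon$, use this to estimate $\bar c'(x^n-\bar x)$ by $(\bar\lambda+\varepsilon)\|b^n-\bar b\|_\infty$ plus the cross term $(\bar c-c^n)'(x^n-\bar x)$, and then feed everything into the calmness of $\mathcal{L}$. The only cosmetic differences are that the paper works pointwise on a neighborhood (rather than along a modulus-realizing sequence), phrases the calmness of $\mathcal{L}$ through the local error bound $d(x,\mathcal S(\bar c,\bar b))\le\gamma\,\bar f(x)$ via \cite[Theorem~1]{KNT10}, and absorbs the cross term cleanly as $\|c-\bar c\|_*\|x-\bar x\|\le\varepsilon\,\|(c,b)-(\bar c,\bar b)\|$ by restricting to $x\in B_\varepsilon(\bar x)$, which avoids your $o(1)$ bookkeeping and the need to treat the case $\mathrm{clm}\,\mathcal S=0$ separately.
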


\begin{proof}
If $\mathrm{clm}\mathcal{L}(({\bar c}^{\prime }\bar{x},{\bar b} ),%
\bar{x})=\infty ,$ then the inequality is trivial. Let $\mathrm{clm}
\mathcal{L}(({\bar c}^{\prime }\bar{x},{\bar b}),\bar{x}
)<\gamma <\infty $ and let $\varepsilon >0.$ Then, appealing to \cite[%
Theorem 1]{KNT10}, there exists a neighborhood $U_{1}$ of $\bar{x}$
such that \sloppy
\begin{equation}
d\left( x,\mathcal{S}({\bar c},{\bar b})\right) <\gamma \bar{f}%
\left( x\right)  \label{eq_2000}
\end{equation}
for all $x\in U_{1}$ with $\bar{f}\left( x\right) >0,$ where $\bar{%
f}$ is the supremum function defined in (\ref{eq_f_bar}). If $\left(
b,x\right) \in \mathrm{gph}\mathcal{F},$ then%
\begin{equation}
\bar{f}\left( x\right) \leq \max \{{\bar c}^{\prime }(x-\bar{x}),%
\text{ }\sup_{t\in T}(b_{t}-{\bar b}_{t})\}\leq \max \left\{ \bar{c}%
^{\prime }(x-\bar{x}),\left\Vert b-{\bar b}\right\Vert _{\infty
}\right\} .  \label{eq_3000}
\end{equation}%
By Lemma \ref{Lem_Gauvin2}, there exist neighborhoods $U_{2}$ of $\bar{x%
}$ and $V$ of $\left( {\bar c},{\bar b}\right) $ such that, for any $%
\left( c,b\right) \in V$ and $x\in \mathcal{S}(c,b)\cap U_{2},$ (\ref%
{eq_Gauvin}) holds for some $\lambda \in \lbrack 0,\bar{\lambda }%
+\varepsilon ).$ Let $\left( c,b\right) \in V$ and $x\in \mathcal{S}%
(c,b)\cap U_{1}\cap U_{2}\cap B_{\varepsilon }\left( \bar{x}\right) $
with $\bar{f}\left( x\right) >0.$ Then $-c\in 
\sum\limits_{t\in T_{0}}\lambda _{t}a_{t}$ for some finite subset $%
T_{0}\subset T_{b}\left( x\right) $ and some numbers $\lambda _{t}\geq 0,$ $%
t\in T_{0},$ satisfying $%
\sum\limits_{t\in T_{0}}\lambda _{t}<\bar{\lambda }+\varepsilon .$
Hence,%
\begin{eqnarray*}
-c^{\prime }(x-\bar{x}) &=& \sum\limits_{t\in T_{0}}\lambda
_{t}a_{t}^{\prime }(x-\bar{x})= \sum\limits_{t\in T_{0}}\lambda
_{t}(b_{t}-a_{t}^{\prime }\bar{x}) \\
&\geq & \sum\limits_{t\in T_{0}}\lambda _{t}(b_{t}-{\bar b}_{t})\geq -(%
\bar{\lambda }+\varepsilon )\left\Vert b-{\bar b}\right\Vert _{\infty
},
\end{eqnarray*}%
and consequently%
\begin{eqnarray}
{\bar c}^{\prime }(x-\bar{x}) &\leq &(\bar{\lambda }+\varepsilon
)\left\Vert b-{\bar b}\right\Vert _{\infty }+\left\Vert c-{\bar c}%
\right\Vert _{\ast }\left\Vert x-\bar{x}\right\Vert  \label{eq_4000} \\
&\leq &(\bar{\lambda }+2\varepsilon )\left\Vert (c,b)-({\bar c},{\
\bar b})\right\Vert .  \notag
\end{eqnarray}%
Combining (\ref{eq_2000}), (\ref{eq_3000}), and (\ref{eq_4000}), we obtain
\begin{equation*}
d\left( x,\mathcal{S}({\bar c},{\bar b})\right) \leq \gamma \max \{%
\bar{\lambda }+2\varepsilon ,1\}\left\Vert (c,b)-({\bar c},{\bar b}%
)\right\Vert .
\end{equation*}%
Hence,
\begin{equation*}
\mathrm{clm}\mathcal{S}(({\bar c},{\bar b}),\bar{x})\leq \gamma
\max \{\bar{\lambda }+2\varepsilon ,1\}.
\end{equation*}
The aimed equality follows after passing to limits in the right hand side of
the last equality as $\varepsilon \downarrow 0$ and $\gamma \downarrow
\mathrm{clm}\mathcal{L}(({\bar c}^{\prime }\bar{x},{\bar b}),%
\bar{x}).$
\end{proof}
\vskip 2mm
\textbf{5.2 An upper bound relying on the nominal data\smallskip }

Throughout this subsection we assume that:
\begin{enumerate}
\item[$(i)$] $T$ is finite;
\item[$(ii)$] ${\ }-{\bar{c}}\in $ \textrm{int}\thinspace $A_{{\bar{b}}%
}\left( \bar{x}\right) .$
\end{enumerate}
\noindent {Condition (ii) implies } $\mathcal{S}\left( {\bar{c}},{\bar{b}}%
\right) =\left\{ \bar{x}\right\} $ and $\left\vert T\right\vert \geq p$
according to a standard argument in linear programming$.$

Our goal in this subsection consists of {demonstrating that the quantity }$%
\max\limits_{D\in \mathcal{T}_{{ \bar{b}}}\left( \bar{x}\right)
}~\left\Vert A_{D}^{-1}\right\Vert ${\ constitutes an upper bound on $%
\mathrm{clm}\mathcal{S} (  ( \bar{c}, \bar{b}) ,%
\bar{x}).$ This upper bound has the virtue to be easily
computable as it relies exclusively on the nominal data $(\bar{c},
\bar{b}) $ and the nominal solution $\bar{x}.$ In {\ Section}
6, we show that this upper bound may not be attained. }
\begin{lem}
\label{Lem ratio at extreme}{\ Under conditions (i) and (ii) above, there }%
exist sequences $b^{n}\rightarrow {\bar{b}}$ and $\mathcal{S}\left( {
\bar{c}},b^{n}\right) \ni x^{n}\rightarrow \bar{x}$ with $\mathcal{T}%
_{b^{n}}\left( x^{n}\right) \neq \emptyset $ for all $n\in \mathbb{N}$ such
that 
\begin{equation*}
\mathrm{clm}\mathcal{S}_{{\bar{c}}}\left( {\bar{b}},\bar{x}\right) =%
\underset{n\rightarrow \infty }{\lim \sup }\frac{\left\Vert x^{n}-\bar{x%
}\right\Vert }{\left\Vert b^{n}-{\bar{b}}\right\Vert _{\infty }}.
\end{equation*}
\end{lem}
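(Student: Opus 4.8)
The plan is to start from the characterization of the calmness modulus as a $\limsup$ along sequences in the graph of $\mathcal{S}_{\bar c}$, and then to \emph{replace} an arbitrary realizing sequence by one enjoying the extra structural property $\mathcal{T}_{b^n}(x^n)\neq\emptyset$, without decreasing the limiting ratio. Concretely, I would write
\begin{equation*}
\mathrm{clm}\,\mathcal{S}_{\bar c}({\bar b},\bar x)=\lim_{n\to\infty}\frac{\|x^n-\bar x\|}{\|b^n-\bar b\|_\infty}
\end{equation*}
for some sequence with $b^n\to{\bar b}$, $b^n\neq{\bar b}$, and $\mathcal{S}_{\bar c}(b^n)\ni x^n\to\bar x$, where we may also assume $x^n\neq\bar x$ (otherwise the ratio is $0$ and there is nothing to prove, since $\mathrm{clm}\,\mathcal{S}_{\bar c}({\bar b},\bar x)\ge 0$ and, under (ii), Theorem~\ref{Th_cotainf} already bounds it above by a finite quantity, so the true modulus is witnessed by nontrivial ratios). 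Since $T$ is finite and $x^n$ solves $P({\bar c},b^n)$, the KKT conditions give a subset $D_n\subset T_{b^n}(x^n)$ with $|D_n|\le p$ and $-{\bar c}\in\mathrm{cone}\{a_t: t\in D_n\}$; by finiteness of $T$ we pass to a subsequence along which $D_n\equiv D$ is constant, and then $D\subset T_{\bar b}(\bar x)$ and $-{\bar c}\in\mathrm{cone}\{a_t:t\in D\}$, exactly as in the proof of Theorem~\ref{Th-upperboundlim}.

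The key step is to \emph{upgrade} this $D$ to a full-rank index set of cardinality exactly $p$ contained in $T_{b^n}(x^n)$, i.e.\ to produce $D^\ast_n\in\mathcal{T}_{b^n}(x^n)$. Here condition (ii), $-{\bar c}\in\mathrm{int}\,A_{\bar b}(\bar x)$, is decisive: it forces $\bar x$ to be an extreme point of $\mathcal{F}(\bar b)$ with $\mathrm{rank}\{a_t:t\in T_{\bar b}(\bar x)\}=p$, and moreover, by the classical Nürnberger/Theorem~\ref{RM3} circle of ideas, it is robust — for $b$ near ${\bar b}$ and $x\in\mathcal{S}_{\bar c}(b)$ near $\bar x$ one still has $-{\bar c}\in\mathrm{int}\,\mathrm{cone}\{a_t:t\in T_b(x)\}$, hence $\mathcal{T}_b(x)\neq\emptyset$. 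Thus for $n$ large there is $D^\ast_n\subset T_{b^n}(x^n)$ with $|D^\ast_n|=p$, $A_{D^\ast_n}$ nonsingular, and $-{\bar c}\in\mathrm{cone}\{a_t:t\in D^\ast_n\}$, i.e.\ $D^\ast_n\in\mathcal{T}_{b^n}(x^n)$. (One gets $D^\ast_n\supset D$ or not — it does not matter; what matters is $D^\ast_n\subset T_{b^n}(x^n)$ and $D^\ast_n\in\mathcal{T}_{b^n}(x^n)$, which is all the statement asks.) The same subsequence then satisfies all three required properties, and since we merely passed to a subsequence of a sequence realizing the $\lim$, the limit of the ratios is preserved; passing from $\lim$ to $\limsup$ in the statement is harmless. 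This yields the claimed sequences $b^n\to{\bar b}$, $x^n\to\bar x$ with $\mathcal{T}_{b^n}(x^n)\neq\emptyset$ and $\mathrm{clm}\,\mathcal{S}_{\bar c}({\bar b},\bar x)=\limsup_n \|x^n-\bar x\|/\|b^n-\bar b\|_\infty$.

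The main obstacle I anticipate is the robustness claim $\mathcal{T}_{b^n}(x^n)\neq\emptyset$ for large $n$: one must be careful that the \emph{active} index set $T_{b^n}(x^n)$ need not contain $T_{\bar b}(\bar x)$, since constraints active at $(\bar b,\bar x)$ may become inactive after perturbation, so I cannot simply inherit $D^\ast$ from the nominal problem. The clean way around this is to argue at the level of cones: from $-{\bar c}\in\mathrm{int}\,A_{\bar b}(\bar x)$ and the fact that $x^n\in\mathcal{S}_{\bar c}(b^n)$ solves an ordinary LP whose optimal value and optimal set converge appropriately (using continuity of the feasible-set mapping under the Slater-type condition that (ii) entails, plus $\mathcal{S}_{\bar c}(b^n)\to\{\bar x\}$), deduce that eventually $-{\bar c}$ lies in the interior of the cone generated by the $a_t$ active at $x^n$ for $b^n$; then a Carathéodory/extreme-point selection inside that full-dimensional cone produces a nonsingular $p$-element subfamily, i.e.\ an element of $\mathcal{T}_{b^n}(x^n)$. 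If a direct topological argument proves awkward, an alternative is to invoke Theorem~\ref{Th_cotainf}/\ref{RM3} more explicitly to get that $\mathcal{S}_{\bar c}$ is locally single-valued near ${\bar b}$ with each nearby optimizer a strongly unique (hence basic, nondegenerate-in-the-cone) solution, from which $\mathcal{T}_{b^n}(x^n)\neq\emptyset$ is immediate. Everything else is bookkeeping on subsequences and the definition of the calmness modulus.
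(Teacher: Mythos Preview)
Your proposal has a genuine gap at exactly the point you flag as ``the main obstacle'': under assumptions (i) and (ii) alone it is \emph{not} true that $\mathcal{T}_{b^n}(x^n)\neq\emptyset$ for large $n$ and an arbitrary choice of $x^n\in\mathcal{S}_{\bar c}(b^n)$. The reason is that condition (ii), $-\bar c\in\mathrm{int}\,A_{\bar b}(\bar x)$, is strictly weaker than the N\"urnberger condition of Theorem~\ref{RM3}(v); hence $\mathcal{S}_{\bar c}$ need not be locally single-valued, and for some nearby $b^n$ the optimal set $\mathcal{S}_{\bar c}(b^n)$ can be a nondegenerate face. At a relative interior point $x^n$ of such a face only the constraints defining that face are active, and they may span a subspace of dimension $<p$. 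Concretely, in $\mathbb{R}^2$ take $a_1=(1,0)'$, $a_2=(0,1)'$, $a_3=(1,1)'/\sqrt{2}$, $\bar b=0_3$, $\bar c=-a_3$; then (ii) holds, yet for $b^n=(1/n,0,0)'$ one has $\mathcal{S}_{\bar c}(b^n)=\{(t,-t)':0\le t\le 1/n\}$, and at $x^n=(1/(2n),-1/(2n))'$ the only active index is $3$, so $A_{b^n}(x^n)=\mathrm{cone}\{a_3\}$ is a ray with empty interior and $\mathcal{T}_{b^n}(x^n)=\emptyset$. This kills both branches of your argument: the ``robustness'' claim $-\bar c\in\mathrm{int}\,A_{b^n}(x^n)$ fails, and the fallback via Theorem~\ref{RM3} is unavailable because its hypothesis (the N\"urnberger condition) is not assumed.

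The paper's proof does not try to show that the original $x^n$ already has $\mathcal{T}_{b^n}(x^n)\neq\emptyset$; instead it \emph{replaces} $x^n$ by another point of $\mathcal{S}_{\bar c}(b^n)$ at which this holds, while guaranteeing the distance to $\bar x$ does not decrease. Starting from a maximal KKT index set $D_n\subset T_{b^n}(x^n)$ with $|D_n|=k<p$ and $\{a_t:t\in D_n\}$ linearly independent, one picks $0\neq u^n\in\{a_t:t\in D_n\}^{\perp}$ (hence $u^n\perp\bar c$) and moves along the half-line $x^n+\alpha u^n$ in the direction that increases $\|\cdot-\bar x\|$; boundedness of nearby optimal sets forces the half-line to leave $\mathcal{F}(b^n)$ at some finite $\alpha^n>0$, picking up a new active index $t_{k+1}^n$ with $a_{t_{k+1}^n}\notin\mathrm{span}\{a_t:t\in D_n\}$. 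Iterating yields an optimal point with $p$ linearly independent active constraints, i.e.\ a point in $\mathcal{S}_{\bar c}(b^n)$ with nonempty $\mathcal{T}_{b^n}$, and with a ratio $\|x^n-\bar x\|/\|b^n-\bar b\|_\infty$ at least as large as the original one. That ``move to an extreme point without decreasing the ratio'' step is the missing idea in your approach.
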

\begin{proof}
The only point to be proved is that sequence $\left( x^{n}\right) _{n\in 
\mathbb{N}}$ may be chosen in such a way that $\mathcal{T}_{b^{n}}\left(
x^{n}\right) \neq \emptyset $ for all $n.$ Let 
\begin{equation*}
\mathrm{clm}\mathcal{S}_{{\bar{c}}}\left( {\bar{b}},\bar{x}\right) =%
\underset{n\rightarrow \infty }{\lim \sup }\frac{\left\Vert z^{n}-\bar{x%
}\right\Vert }{\left\Vert b^{n}-{\bar{b}}\right\Vert _{\infty }}
\end{equation*}%
with $b^{n}\rightarrow {\bar{b}}$ and $\mathcal{S}\left( {\bar{c}}%
,b^{n}\right) \ni z^{n}\rightarrow \bar{x}.$ Assume that, for some $%
n\in \mathbb{N}$, $\mathcal{T}_{b^{n}}\left( z^{n}\right) =\emptyset ,$ and
let us show that $z^{n}$ may be replaced with $x^{n}\in \mathcal{S}\left( {\ 
\bar{c}},b^{n}\right) $ such that $\mathcal{T}_{b^{n}}\left( x^{n}\right)
\neq \emptyset $ and $\left\Vert x^{n}-\bar{x}\right\Vert \geq
\left\Vert z^{n}-\bar{x}\right\Vert .$ This will complete the proof.

According to the KKT conditions (recall that $T$ is finite), and taking into
account the current assumption that $\mathcal{T}_{b^{n}}\left( z^{n}\right)
=\emptyset ,$ we can write 
\begin{equation}
-{\bar{c}}=\sum_{i=1}^{k}\lambda _{i}^{n}a_{t_{i}^{n}},  \label{c_lemma}
\end{equation}%
with $k<p,$ $\lambda _{i}^{n}\geq 0$ for $1\leq i\leq k,$ $D_{n}:=\left\{
t_{1}^{n},...,t_{k}^{n}\right\} \subset T_{b^{n}}\left( z^{n}\right) $ and $%
\left\{ a_{t_{1}^{n}},...,a_{t_{k}^{n}}\right\} $ linearly independent (by
Carath\'{e}odory's Theorem). Without loss of generality, assume that $D_{n}$
is maximal, in the sense that there does not exist $\widetilde{D}%
_{n}\varsupsetneq D_{n}$ such that $\widetilde{D}_{n}\subset T_{b^{n}}\left(
z^{n}\right) ,$ $-{\bar{c}\in }\mathrm{cone}\left\{ a_{t},\text{ }t\in 
\widetilde{D}_{n}\right\} $ and $\left\{ a_{t},\text{ }t\in \widetilde{D}%
_{n}\right\} $ is linearly independent$.$ Pick $0_{p}\neq u^{n}\in \left\{
a_{t_{1}^{n}},...,a_{t_{k}^{n}}\right\} ^{\bot }$ (hence $u^{n}\bot {\bar{c}}
$). It is a basic fact that either $\left\Vert z^{n}+\alpha u^{n}-\bar{x%
}\right\Vert \geq \left\Vert z^{n}-\bar{x}\right\Vert $ for all $\alpha
>0$ or $\left\Vert z^{n}-\alpha u^{n}-\bar{x}\right\Vert \geq
\left\Vert z^{n}-\bar{x}\right\Vert $ for all $\alpha >0$ (see, \cite[
Lemma 5]{CGP08}). Without loss of
generality, assume that the first case holds. Note that $D_{n}\subset
T_{b^{n}}\left( z^{n}+\alpha u^{n}\right) $ and $z^{n}+\alpha u^{n}\in 
\mathcal{S}\left( {\bar{c}},b^{n}\right) $ whenever $z^{n}+\alpha u^{n}$ is
feasible for $b^{n}.$ On the other hand, the 
boundedness of $\{\bar{x}\}=\mathcal{S}\left( {\bar{c}},{\bar{b}}%
\right) $ entails the boundedness of $\mathcal{S}\left( {\bar{c}}%
,b^{n}\right) $ for $n$ large enough (by the upper semicontinuity of $%
\mathcal{S}$ at $\left( {\bar{c}},{\bar{b}}\right) ;$ see, e.g., \cite[Theorem 4.3.3]{BGKKT82}). Hence, $\alpha ^{n}:=\sup \left\{ \alpha \geq
0\mid z^{n}+\alpha u^{n}\in \mathcal{F}\left( b^{n}\right) \right\} $ is
finite and, then, there exists a new index, say $t_{k+1}^{n},$ such that $%
t_{k+1}^{n}\in T_{b^{n}} \left( z^{n}+\alpha ^{n}u^{n}\right) $ and
that $\left\{ a_{t_{1}^{n}},...,a_{t_{k}^{n}},a_{t_{k+1}^{n}}\right\} $ is
linearly independent. Specifically,  the
maximality of $D_{n}$ implies that $\alpha ^{n}\neq 0\,\ $ and the definition of $\alpha ^{n}$ entails
the existence of $t_{k+1}^{n}\in T_{b^{n}}\left( z^{n}+\alpha
^{n}u^{n}\right) $ such that $\left\langle
a_{t_{k+1}^{n}},u^{n}\right\rangle >0$ yielding the linear independence of $%
\left\{ a_{t_{1}^{n}},...,a_{t_{k}^{n}},a_{t_{k+1}^{n}}\right\} .$  Obviously we can
extend (\ref{c_lemma}) by defining $\lambda _{k+1}^{n}=0.$ If $k+1=p,$ take $%
x^{n}=z^{n}+\alpha ^{n}u^{n},$ otherwise, we repeat the process until
finding the aimed point $x^{n}$ and an associated set of active indices $%
\left\{ t_{1}^{n},...,t_{p}^{n}\right\} $ belonging to $\mathcal{T}%
_{b^{n}}\left( x^{n}\right) $.

\end{proof}
\begin{theo}[Upper bound on the calmness modulus]
\label{Th upper bound}%
\begin{equation}
\mathrm{clm}\mathcal{S}\left( \left( {\bar c},{\bar b}\right) ,\bar{%
x}\right) \leq \max\limits_{D\in \mathcal{T}_{{\bar b}}\left( \bar{x}%
\right) }~\left\Vert A_{D}^{-1}\right\Vert .  \label{bounds}
\end{equation}
\end{theo}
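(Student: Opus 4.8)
The plan is to combine Theorem~\ref{Th-upperboundlim} with Lemma~\ref{Lem ratio at extreme} and then reduce everything to an elementary estimate for a single nonsingular $p\times p$ subsystem. First, since $((\bar{c},\bar{b}),\bar{x})\in\mathrm{gph}\,\mathcal{S}$ and $T$ is finite, Theorem~\ref{Th-upperboundlim} gives
\begin{equation*}
\mathrm{clm}\,\mathcal{S}((\bar{c},\bar{b}),\bar{x})=\mathrm{clm}\,\mathcal{S}_{\bar{c}}(\bar{b},\bar{x}),
\end{equation*}
so it suffices to bound the partial modulus. Invoking Lemma~\ref{Lem ratio at extreme}, I would fix sequences $b^{n}\to\bar{b}$ (with $b^{n}\neq\bar{b}$) and $\mathcal{S}(\bar{c},b^{n})\ni x^{n}\to\bar{x}$ such that $\mathcal{T}_{b^{n}}(x^{n})\neq\emptyset$ for every $n$ and $\mathrm{clm}\,\mathcal{S}_{\bar{c}}(\bar{b},\bar{x})=\limsup_{n\to\infty}\|x^{n}-\bar{x}\|/\|b^{n}-\bar{b}\|_{\infty}$.

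Next I would stabilize the active index set. Pass first to a subsequence along which the $\limsup$ above is attained as a genuine limit, and then, using the finiteness of $T$, refine once more so that a choice $D_{n}\in\mathcal{T}_{b^{n}}(x^{n})$ is constant along the subsequence, say $D_{n}\equiv D$. By the very definition (\ref{eq_tbx}), $|D|=p$, $A_{D}$ is nonsingular, and $-\bar{c}\in\mathrm{cone}\{a_{t}\mid t\in D\}$; moreover $D\subset T_{b^{n}}(x^{n})$ means $\langle a_{t},x^{n}\rangle=b^{n}_{t}$ for all $t\in D$ and all $n$ in the subsequence, and letting $n\to\infty$ yields $\langle a_{t},\bar{x}\rangle=\bar{b}_{t}$ for $t\in D$, i.e.\ $D\subset T_{\bar{b}}(\bar{x})$. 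Hence $D\in\mathcal{T}_{\bar{b}}(\bar{x})$; in particular this set is non-empty, so the right-hand side of (\ref{bounds}) is a well-defined maximum over a finite collection.

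Finally, the crucial point is that $\bar{x}$ is active on \emph{all} of $D$, so both $A_{D}x^{n}=b^{n}_{D}$ and $A_{D}\bar{x}=\bar{b}_{D}$ hold exactly (not merely in the limit). Subtracting and applying $A_{D}^{-1}$ gives $x^{n}-\bar{x}=A_{D}^{-1}(b^{n}_{D}-\bar{b}_{D})$, and, recalling that $\|A_{D}^{-1}\|$ in (\ref{Norm calculus}) is precisely the operator norm from $(\mathbb{R}^{D},\|\cdot\|_{\infty})$ to $(\mathbb{R}^{p},\|\cdot\|)$, I obtain
\begin{equation*}
\|x^{n}-\bar{x}\|\le\|A_{D}^{-1}\|\,\|b^{n}_{D}-\bar{b}_{D}\|_{\infty}\le\|A_{D}^{-1}\|\,\|b^{n}-\bar{b}\|_{\infty}.
\end{equation*}
Dividing by $\|b^{n}-\bar{b}\|_{\infty}$ and letting $n\to\infty$ gives $\mathrm{clm}\,\mathcal{S}_{\bar{c}}(\bar{b},\bar{x})\le\|A_{D}^{-1}\|\le\max_{D'\in\mathcal{T}_{\bar{b}}(\bar{x})}\|A_{D'}^{-1}\|$, which together with the reduction in the first paragraph yields (\ref{bounds}).

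The main obstacle is not in the argument above but is already built into Lemma~\ref{Lem ratio at extreme}: that lemma is what allows one to realize the calmness modulus along a sequence for which $\mathcal{T}_{b^{n}}(x^{n})\neq\emptyset$, obtained by sliding $x^{n}$ along directions orthogonal to the currently active gradients (without decreasing $\|x^{n}-\bar{x}\|$) until a nonsingular $p\times p$ active subsystem compatible with the KKT conditions appears. Given that, the only subtlety left is the one stressed above — identifying the limiting index set $D$ as an element of $\mathcal{T}_{\bar{b}}(\bar{x})$, so that $A_{D}\bar{x}=\bar{b}_{D}$ holds with equality and the final estimate carries no leftover constant term — together with keeping all norms consistent with the convention in (\ref{Norm calculus}).
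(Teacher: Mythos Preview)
Your proof is correct and follows essentially the same route as the paper: invoke Theorem~\ref{Th-upperboundlim} to reduce to $\mathcal{S}_{\bar c}$, use Lemma~\ref{Lem ratio at extreme} to realize the modulus along a sequence with $\mathcal{T}_{b^{n}}(x^{n})\neq\emptyset$, extract a constant $D\in\mathcal{T}_{b^{n}}(x^{n})$ by finiteness of $T$, check that $D\in\mathcal{T}_{\bar b}(\bar x)$ by passing to the limit in the active equalities, and conclude via $x^{n}-\bar x=A_{D}^{-1}(b^{n}_{D}-\bar b_{D})$. Your explicit remark that $A_{D}\bar x=\bar b_{D}$ must hold with equality (so no leftover term appears) is exactly the point the paper uses when writing $\bar x=A_{D_{0}}^{-1}\bar b_{D_{0}}$.
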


\begin{proof}
According to the previous lemma together with the fact that $\mathrm{clm}%
\mathcal{S}(({\bar c},{\bar b}),\bar{x})=\mathrm{clm}\mathcal{S}_{{%
\bar c}}({\bar b},\bar{x}),$ established in Theorem \ref%
{Th-upperboundlim} {\ we} write \sloppy
\begin{equation*}
\mathrm{clm}\mathcal{S}\left( \left( {\bar c},{\bar b}\right) ,\bar{%
x}\right) =\lim_{n\rightarrow +\infty }\frac{\left\Vert x^n-\bar{x}%
\right\Vert }{\left\Vert b^n-{\bar b}\right\Vert _{\infty }}
\end{equation*}%
with $b^n\rightarrow {\bar b},$ $\mathcal{S}\left( {\bar c},b^n\right)
\ni x^n\rightarrow \bar{x},$ and $\mathcal{T}_{b^n}\left( x^n\right)
\neq \emptyset $ for all $n\in \mathbb{N}$. Choose $D_n\in \mathcal{T}%
_{b^n}\left( x^n\right) $ for each $n,$ and write $%
x^n=A_{D_n}^{-1}b_{D_n}^n. $ From the finiteness of $T,$ the sequence $%
\left( D_n\right) _{n\in \mathbb{N}}$ of subsets of $T$ has a constant
subsequence. Thus, we may assume (without renumbering) that $D_n=D_{0}$ for
all\thinspace $n\in \mathbb{N},$ and therefore 
$x^n=A_{D_{0}}^{-1}b_{D_{0}}^n$ 
for \thinspace $n\in \mathbb{N}$. Since $D_{0}\subset T_{b^n}\left(
x^n\right) ,$ $n=1,2,...,$ one easily checks that $D_{0}\subset T_{{\bar b}%
}\left( \bar{x}\right) ,$ and then we can write $\bar{x}%
=A_{D_{0}}^{-1}{\bar b}_{D_{0}}.$ Moreover, the fact that, for any $n, $ $%
D_{0}=D_n\in \mathcal{T}_{b^n}\left( x^n\right) $ trivially implies $%
D_{0}\in \mathcal{T}_{{\bar b}}\left( \bar{x}\right) $. Therefore,
\begin{eqnarray*}
\mathrm{clm}\mathcal{S}\left( \left( {\bar c},{\bar b}\right) ,\bar{%
x}\right) &=&\lim_{n\rightarrow +\infty }\frac{\left\Vert x^n-\bar{x}%
\right\Vert }{\left\Vert b^n-{\bar b}\right\Vert _{\infty }}\leq \underset{%
n\rightarrow + \infty }{\lim \sup }\frac{\left\Vert A_{D_{0}}^{-1}\left(
b_{D_{0}}^n-{\bar b}_{D_{0}}\right) \right\Vert }{\left\Vert b_{D_{0}}^n-{%
\bar b}_{D_{0}}\right\Vert _{\infty }} \\
&\leq &\left\Vert A_{D_{0}}^{-1}\right\Vert \leq \max\limits_{D\in \mathcal{T%
}_{{\bar b}}\left( \bar{x}\right) }~\left\Vert A_{D}^{-1}\right\Vert .
\end{eqnarray*}
\end{proof}

\section{Illustrative examples}

\begin{exa}[Upper bound in Theorem \protect\ref{Th upper bound} attained]
\label{Exa attained}\emph{Consider the parameterized linear optimization
problem, in }$\mathbb{R}^{2}$ \emph{endowed with the Euclidean norm}$:$
\begin{equation*}
\begin{tabular}{cl}
\textrm{minimize } & $x_{1}+\frac{1}{3}x_{2}$ \\
\textrm{{subject to}$\text{ }$} & $-x_{1}\leq b_{1},$ \\
& $-x_{1}-\frac{1}{2}x_{2}\leq b_{2},$ \\
& $-x_{1}-x_{2}\leq b_{3},$%
\end{tabular}%
\end{equation*}
\end{exa}
\noindent with the nominal parameter ${\bar{b}=}${$0$}${_{3}}$ and ${\bar{c}}%
=\left( 1,1/3\right) ^{\prime }.$ In this case,
$\bar{x}=${$0$}${_{2}}$ is the unique optimal solution. According to
Theorem \ref{RM3}{\ (v)}, it is easy to see that $\mathcal{S}$ is Aubin
continuous (and therefore isolatedly calm) at $\left( {\bar{c}},{\bar{b}}%
\right) $ for $\bar{x}.$ Moreover,
\begin{equation*}
\mathcal{T}_{{\bar{b}}}\left( \bar{x}\right) =\left\{ \left\{
1,2\right\} ,\left\{ 1,3\right\} \right\} ,~\left\Vert A_{\left\{
1,2\right\} }^{-1}\right\Vert =\sqrt{17},~\left\Vert A_{\left\{ 1,3\right\}
}^{-1}\right\Vert =\sqrt{5}.
\end{equation*}%
(According to the last expression in (\ref{Norm calculus}), $\left\Vert
A_{D}^{-1}\right\Vert $ is the inverse of the $\left\Vert \cdot \right\Vert
_{\ast }$-distance from the origin to the boundary of the parallelogram
whose vertices are $\left\{ \pm a_{t}\mid t\in D\right\} .$) If we consider
the sequence $\left\{ \left( b^{n},x^{n}\right) \right\} _{n\in \mathbb{N}}$
in $\gph\mathcal{S}_{{\bar{c}}}$ given by $b^{n}=\left(
1/n,-1/n,0\right) ^{\prime }$ and $x^{n}=\left( -1/n,4/n\right) ,$ we have
\begin{equation}
\underset{n\rightarrow +\infty }{\lim }\frac{\left\Vert x^{n}-\bar{x}%
\right\Vert }{\left\Vert b^{n}-{\bar{b}}\right\Vert _{\infty }}=\sqrt{17}.
\label{101}
\end{equation}%
This expression, together with Theorem \ref{Th upper bound}, entails $%
\mathrm{clm}\mathcal{S}\left( \left( {\bar{c}},{\bar{b}}\right) ,\bar{x}%
\right) =\sqrt{17}.$

Observe that in this example the expression for $\mathrm{clm}\mathcal{S}%
\left( \left( {\bar{c}},{\bar{b}}\right) ,\bar{x}\right) $ provided in
Corollary \ref{cor1} can be written as
\begin{equation*}
\mathrm{clm}\mathcal{S}\left( \left( {\bar{c}},{\bar{b}}\right) ,\bar{x}%
\right) =\underset{n\rightarrow +\infty }{\lim }\frac{1}{d_{\ast }\left(
0_{2},\partial f_{D}\left( x^{n}\right) \right) }=\sqrt{17},
\end{equation*}%
where $x^{n}$ is the same as above and $D=\{1,2\}.$ The reader can easily
check that $\partial f_{D}\left( x^{n}\right) =\mathrm{co}\{a_{1},-a_{2}\}$
and $d_{\ast }\left( 0_{2},\partial f_{D}\left( x^{n}\right) \right) =1/%
\sqrt{17},$ appealing to (\ref{Norm calculus}).
\begin{rem}
If we remove the last inequality in the previous example, then
alternative  $b^{n}$ and  $x^{n}$  is given by $\left(
-1/n,1/n\right) $  and  $\left( 1/n,-4/n\right). $ Observe that
the latter point does not fulfill the referred last inequality (for index $%
t=3$). To make this point feasible, we would need to perturb ${\bar{%
b}}_{3}$ until $-3/n,$  and the limit in (\ref{101}) is  $\sqrt{%
17}/3$.  This fact is used in the following example to show that the
upper bound on the calmness modulus may not be attained.
\end{rem}
\begin{exa}[Upper bound in Theorem \protect\ref{Th upper bound} not attained]

\label{Exa not attained}\emph{Consider the linear optimization problem, in }$%
\mathbb{R}^{2}$ \emph{endowed with the Euclidean norm}$:$ 
\begin{equation*}
\begin{tabular}{clc}
\textrm{minimize} & $x_{1}+\frac{1}{3}x_{2}$ &  \\
\textrm{{subject to}$\text{ }$} & $-x_{1}\leq b_{1},$ & $\left( t=1\right) $
\\
& $-x_{1}-\frac{1}{2}x_{2}\leq b_{2},$ & $\left( t=2\right) $ \\
& $-x_{1}-x_{2}\leq b_{3},$ & $\left( t=3\right) $ \\
& $-x_{1}+x_{2}\leq b_{4},$ & $\left( t=4\right) $%
\end{tabular}%
\end{equation*}
\end{exa}
\noindent with the nominal data ${\bar{b}}=0_{4},$ ${\bar{c}}=\left(
1,1/3\right) ^{\prime }$ and $\bar{x}=0_{2}.$ One has
\begin{eqnarray*}
\mathcal{T}_{{\bar{b}}}\left( \bar{x}\right)  &=&\left\{ \left\{
1,2\right\} ,\left\{ 1,3\right\} ,\left\{ 2,4\right\} ,\left\{ 3,4\right\}
\right\}  \\
\left\Vert A_{\left\{ 1,2\right\} }^{-1}\right\Vert  &=&\sqrt{17}%
,~\left\Vert A_{\left\{ 1,3\right\} }^{-1}\right\Vert =\sqrt{5},~\left\Vert
A_{\left\{ 2,4\right\} }^{-1}\right\Vert =\sqrt{17}/3,~\left\Vert A_{\left\{
3,4\right\} }^{-1}\right\Vert =1.
\end{eqnarray*}%
As in Example \ref{Exa attained}, we have Aubin continuity of $\mathcal{S}$
at these nominal data. But now, following the lines of the previous example
and remark, both $x^{n}$ and $-x^{n}$ are infeasible for $b^{n}=\left(
1/n,-1/n,0,0\right) ^{\prime },$ and we would need at least either $%
b_{4}^{n}=5/n$ or $b_{3}^{n}={\ \frac{3}{n}}$ for the feasibility of {\ $%
x^{n}$ or $-x^{n},$} respectively. Indeed, it is easy to see that for any $%
b\in \mathbb{R}^{4}$ with $\left\Vert b\right\Vert _{\infty }\leq
\varepsilon $ (for any given $\varepsilon >0$), one has $\left( \varepsilon
,0\right) ^{\prime }\in \mathcal{F}\left( b\right) $ and $\mathcal{F}\left(
b\right) \subset \mathcal{F}\left( \varepsilon e\right) ,$ with $e=\left(
1,1,1,1\right) ^{\prime };$ and consequently $\mathcal{S}\left( {\bar{c}}%
,b\right) \subset \left\{ x\in \mathcal{F}\left( \varepsilon e\right) \mid
x_{1}+\frac{1}{3}x_{2}\leq \varepsilon ,\right\} ,$ where the last
inequality is nothing else but $\left\langle {\bar{c}},x\right\rangle \leq
\left\langle {\bar{c}},\left( \varepsilon ,0\right) ^{\prime
}\right\rangle .$
We can go a bit further {\ by} saying that $x_{1}\leq
\varepsilon $ for all $x\in \mathcal{S}\left( {\bar{c}},b\right) ,$ since
otherwise neither $t=1$ nor $t=4$ might be active indices at $x$ {\ ($t=4$
cannot be active as $x_{1}>\varepsilon $ and $x_{1}+\frac{1}{3}x_{2}\leq
\varepsilon $ imply $x_{2}<0$), } and therefore $x$ would not be optimal
according to the KKT conditions. Thus,
\begin{equation*}
\mathcal{S}\left( {\bar{c}},b\right) \subset \left\{ x\in \mathcal{F}%
\left( \varepsilon e\right) \mid x_{1}+\frac{1}{3}x_{2}\leq \varepsilon
,~x_{1}\leq \varepsilon \right\} ,
\end{equation*}%
and consequently (recalling that we are working with  the
 Euclidean norm) $\left\Vert x-\bar{x}\right\Vert \leq \left\Vert 
\bar{x}-\left( \varepsilon ,-2\varepsilon \right) ^{\prime }\right\Vert
=\sqrt{5}\varepsilon $ whenever $\left\Vert b\right\Vert _{\infty }\leq
\varepsilon $ and $x\in \mathcal{S}\left( \bar{c},b\right) .$ 
Moreover $\left( \varepsilon ,-2\varepsilon \right) ^{\prime
}\in \mathcal{S}\left( {\bar{c}},b^{\varepsilon }\right) $ with $%
b^{\varepsilon }:=\left( -\varepsilon ,\varepsilon ,\varepsilon ,\varepsilon
\right) ^{\prime }$ and $\mathcal{T}_{b^{\varepsilon }}\left( \left(
\varepsilon ,-2\varepsilon \right) ^{\prime }\right) =\left\{ 1,3\right\} .$
Gathering all of this, we have \sloppy%
\begin{equation}
\mathrm{clm}\mathcal{S}\left( \left( {\bar{c}},{\bar{b}}\right) ,%
\bar{x}\right) =\underset{\varepsilon \rightarrow 0^{+}}{\lim \sup }%
\frac{\sqrt{5}\varepsilon }{\varepsilon }=\sqrt{5}.  \label{eq_exa2}
\end{equation}
Concerning the expression of Corollary \ref{cor1}, in this case we can
write, once that we know (\ref{eq_exa2})%
\begin{equation*}
\mathrm{clm}\mathcal{S}\left( \left( {\bar{c}},{\bar{b}}\right) ,%
\bar{x}\right) =\underset{\varepsilon \rightarrow 0^{+}}{\lim }\frac{1}{%
d_{\ast }\left( 0_{2},\partial f_{D}\left( x^{\varepsilon }\right) \right) }=%
\sqrt{5},
\end{equation*}%
where $x^{\varepsilon }:=\left( \varepsilon ,-2\varepsilon \right) ^{\prime }
$ and $D=\{1,3\}.$ The reader can easily check that $\partial f_{D}\left(
x^{\varepsilon }\right) =\mathrm{co}\{-a_{1},a_{3}\}$ and $d_{\ast }\left(
0_{2},\partial f_{D}\left( x^{\varepsilon }\right) \right) =1/\sqrt{5}$
appealing once more to (\ref{Norm calculus}).

The next remark emphasizes the difficulties which may arise when trying to
compute the calmness modulus in comparison with the Lipschitz modulus.

\begin{rem}
\emph{In the previous example, we may write, taking Theorem \ref{Th_cotainf}
into account,}
\begin{equation*}
\mathrm{lip}\mathcal{S}\left( \left( {\bar{c}},{\bar{b}}\right) ,\bar{x}%
\right) =\sqrt{17}=\lim_{n\rightarrow \infty }\frac{\left\Vert x^{n}-%
\widetilde{x}^{n}\right\Vert }{\left\Vert b^{n}-\widetilde{b}^{n}\right\Vert
_{\infty }}
\end{equation*}%
\emph{with} $x^{n}=\left( -1/n,4/n\right) ^{\prime },$ $\widetilde{x}^{n}=%
\bar{x}=${$0$}${_{2},}\;b^{n}=\left( 1/n,-1/n,0,5/n\right) ^{\prime },$
$\widetilde{b}^{n}=\left( 0,0,0,5/n\right) ^{\prime }.$
\end{rem}
We finish this section we a semi-infinite example which tries to show some
geometrical ideas that make us conjecture that perturbations of $\bar{c}$
are negligible when computing the calmness modulus of $\mathcal{S}$ at $%
\left( {\bar{c}},{\bar{b}}\right) ;$ i. e., $\mathrm{clm}\mathcal{S}\left(
\left( {\bar{c}},{\bar{b}}\right) ,\bar{x}\right) =\mathrm{clm}%
\mathcal{S}_{{\bar{c}}}\left( \bar{b},\bar{x}\right) .$ In this
example one has that $T$ is infinite, $P({\bar{c}},{\bar{b}})$ satisfies the
Slater{\ condition}, $\bar{x}\in \mathcal{S}({\bar{c}},{\bar{b}}),$ and
$-\bar{c}\in \mathrm{int}\,A_{\bar{b}}(\bar{x})$ (which already implies
$\mathcal{S}\left( {\bar{c}},{\bar{b}}\right) =\left\{ \bar{x}\right\} $
according to Theorem \ref{th charact_linear}).
\begin{exa}
\emph{Consider the linear optimization problem, in }$\mathbb{R}^{2}$ \emph{%
endowed with the Euclidean norm}$:$ 
\begin{equation*}
\begin{tabular}{clc}
\textrm{minimize} & $x_{1}$ &  \\
\textrm{{subject to}$\text{ }$} & $\left( \cos t\right) x_{1}+\left( \sin
t\right) x_{2}\leq b_{t},$ & \multicolumn{1}{l}{$t\in \left[ -\pi ,\pi %
\right] $} \\
& $-x_{1}-x_{2}\leq b_{4},$ & \multicolumn{1}{l}{$\left( t=4\right) $} \\
& $-x_{1}+x_{2}\leq b_{5}.$ & \multicolumn{1}{l}{$\left( t=5\right) $}%
\end{tabular}%
\end{equation*}
\end{exa}
\noindent According to Theorem \ref{th charact_linear}, $\mathcal{S}$ is
isolatedly calm at $\left( {\bar{c}},{\bar{b}}\right) $ for $\bar{x}%
,$ with ${\bar{c}}=\left( 1,0\right) ^{\prime },$ ${\bar{b}}_{t}=1$ for
all $t\in T:=\left[ -\pi ,\pi \right] \cup \left\{ 4,5\right\} $ and $%
\bar{x}=\left( -1,0\right) ^{\prime }.$ Here $T_{{\bar{b}}}\left(
\bar{x}\right) =\left\{ -\pi ,\pi ,4,5\right\} $ and
\begin{equation*}
\mathcal{T}_{{\bar{b}}}\left( \bar{x}\right) =\left\{ \left\{ -\pi
,4\right\} ,\left\{ -\pi ,5\right\} ,\left\{ \pi ,4\right\} ,\left\{ \pi
,5\right\} ,\left\{ 4,5\right\} \right\} .
\end{equation*}%
Note that $\left\{ -\pi ,\pi \right\} \notin \mathcal{T}_{{\bar{b}}}\left(
\bar{x}\right) $ since $a_{-\pi }$ and $a_{\pi }$ are not linearly
independent. We can easily see that $\left\Vert A_{D}^{-1}\right\Vert =\sqrt{%
5}$ for all $D\in \mathcal{T}_{{\bar{b}}}\left( \bar{x}\right)
\backslash \left\{ \left\{ 4,5\right\} \right\} $ and $\left\Vert A_{\left\{
4,5\right\} }^{-1}\right\Vert =1.$ We will show that in this example
\begin{equation*}
\mathrm{clm}\mathcal{S}\left( \left( {\bar{c}},{\bar{b}}\right) ,%
\bar{x}\right) =\mathrm{clm}\mathcal{S}_{{\bar{c}}}\left( \bar{b},%
\bar{x}\right) =\sqrt{5}.
\end{equation*}%
Let $0<\varepsilon <3-\sqrt{8}$ be given (the upper bound $3-\sqrt{8}$ will
guarantee the existence of some forthcoming elements). We can check that,
for all $\left( c,b\right) \in \mathbb{R\times }C\left( T,\mathbb{R}\right) $
with $\left\Vert \left( c,b\right) -\left( {\bar{c}},{\bar{b}}\right)
\right\Vert \leq \varepsilon ,$ we have
\begin{equation}
\mathcal{S}\left( c,b\right) \subset \left\{ x\in \mathbb{R}^{2}\left\vert
\begin{array}{l}
-x_{1}-x_{2}\leq 1+\varepsilon ,~-x_{1}+x_{2}\leq 1+\varepsilon , \\
x_{1}\leq -\sqrt{\left( 1-\varepsilon \right) ^{2}-x_{2}^{2}}%
\end{array}%
\right. \right\} .  \label{angle}
\end{equation}%
The first two inequalities need to hold for feasibility reasons. The last
inequality obeys the KKT conditions, in order to ensure the existence of
active indices at $x\in \mathcal{S}\left( c,b\right) $ to enable $-c\in
A_{b}\left( x\right) $.

Routine calculus shows that the furthest points from the origin in the
right-hand side set of (\ref{angle}) are
\begin{equation*}
\frac{1}{2}\left( -1-\varepsilon -\sqrt{1-6\varepsilon +\varepsilon ^{2}}%
,\pm \left( 1+\varepsilon -\sqrt{1-6\varepsilon +\varepsilon ^{2}}\right)
\right) \approx \left( -1+\varepsilon ,\pm 2\varepsilon \right) .
\end{equation*}
This yields $\mathrm{clm}\mathcal{S}\left( \left( {\bar{c}},{\bar{b}}%
\right) ,\bar{x}\right) \leq \sqrt{5}.$ Next we provide a sequence $%
\mathrm{gph}\mathcal{S}_{{\bar{c}}}\ni \left( b^{n},x^{n}\right)
\rightarrow \left( {\bar{b}},\bar{x}\right) $ such that
\begin{equation}
\lim_{n\rightarrow \infty }\frac{\left\Vert x^{n}-\bar{x}\right\Vert }{%
\left\Vert b^{n}-{\bar{b}}\right\Vert _{\infty }}=\sqrt{5}.
\label{ratio sqrt5}
\end{equation}%
Setting $\varepsilon =1/n$ with $n\in \mathbb{N}$ in the above expressions,
let
\begin{align*}
\widetilde{b}_{t}^{n}& :=\left\{
\begin{tabular}{cl}
$1-\dfrac{1}{n}$ & if $t\in \left[ -\pi ,\pi \right] ,\medskip $ \\
$1+\dfrac{1}{n}$ & if $t\in \left\{ 4,5\right\} ,$%
\end{tabular}%
\right.  \\
x^{n}& =\left( x_{1}^{n},x_{2}^{n}\right) :=\frac{1}{2}\left( -1-\frac{1}{n}-%
\sqrt{1-\frac{6}{n}+\frac{1}{n^{2}}},-1-\frac{1}{n}+\sqrt{1-\frac{6}{n}+%
\frac{1}{n^{2}}}\right) .
\end{align*}%
Then $T_{\widetilde{b}^{n}}\left( x^{n}\right) =\left\{ -\arccos \dfrac{%
x_{1}^{n}}{1-\frac{1}{n}},~4\right\} .$ The problem here is that making $%
x^{n}$ optimal for $\widetilde{b}^{n}$ yields a {large} (in relative terms)
perturbation of ${\bar{c}},$ namely, $c^{n}=-x^{n},$ that would enlarge
the ratio as $\displaystyle\lim_{n\rightarrow \infty }\dfrac{\left\Vert
x^{n}-\bar{x}\right\Vert }{\left\Vert (c^{n},\widetilde{b}^{n})-\left( {\
\bar{c}},{\bar{b}}\right) \right\Vert }=1.$ Thus, it would be convenient
to perform a small perturbation of $\widetilde{b}^{n}$ for getting a $b^{n}$
such that $x^{n}\in \mathcal{S}_{{\bar{c}}}\left( b^{n}\right) $ and (\ref%
{ratio sqrt5}) holds. The reader can check that such an element is given by
\begin{equation*}
b_{t}^{n}:=\left\{
\begin{tabular}{ll}
$1-\dfrac{1}{n}$ & if $\left\vert t\right\vert \leq \alpha _{n},\medskip $
\\
$\left( 1-\dfrac{1}{n}\right) \cos \left( \left\vert t\right\vert -\alpha
_{n}\right) $ & if $\left\vert t\right\vert >\alpha _{n},\medskip $ \\
$1+\dfrac{1}{n}$ & if $t\in \left\{ 4,5\right\} ,$%
\end{tabular}%
\right.
\end{equation*}%
where $\alpha _{n}:=\arccos \dfrac{x_{1}^{n}}{1-\frac{1}{n}}.$

\section{Concluding remarks}
The paper provides lower and upper estimates for the calmness modulus of $%
\mathcal{S}$ under different assumptions. The main contributions of this
work appeal to three different constants:%
\begin{align*}
C1& :=\sup_{D\in \mathcal{K}_{{\bar{b}}}\left( \bar{x}\right) }\limsup 
_{\substack{ x\rightarrow \bar{x}  \\ f_{D}\left( x\right) >0}}\frac{1}{%
d_{\ast }\left( {0}_{p},\partial f_{D}\left( x\right) \right) } \\
& =\sup_{D\in \mathcal{K}_{{\bar{b}}}\left( \bar{x}\right) }\mathrm{clm}%
\mathcal{L}_{D}(({\bar{b},-\bar{b}}_{D}),\bar{x}),\text{ \medskip } \\
C2& :=\max \left\{ \bar{\lambda },1\right\} \mathrm{clm}\mathcal{L}(({%
\bar{c}}^{\prime }\bar{x},{\bar{b}}),\bar{x}),\text{\medskip }
\end{align*}
 \text{and}
\begin{align*}
C3&:=\max\limits_{D\in \mathcal{T}_{{\bar{b}}}\left( \bar{x}\right)
}~\left\Vert A_{D}^{-1}\right\Vert .
\end{align*}%
(Recall that set $\mathcal{K}_{{\bar{b}}}\left( \bar{x}\right) ,$
functions $f_{D},$ and mappings $\mathcal{L}_{D},$ for $D\in \mathcal{K}_{{%
\bar{b}}}\left( \bar{x}\right) ,$ are defined at the beginning of
Section 3; $\bar{\lambda }$ is defined in (30), and $\mathcal{T}_{{\ 
\bar{b}}}\left( \bar{x}\right) $ is in (15)).
The main results of the present paper concerning the semi-infinite case,
with $T$ being a compact Hausdorff space, are:
\begin{itemize}
\item  If $P\left( {\bar{c}},{\bar{b}}\right) $ satisfies  the  Slater condition,
 then:%
\begin{equation*}
\mathrm{clm}\mathcal{S}_{{\bar{c}}}\left( {\bar{b}},\bar{x}\right) \leq 
\mathrm{clm}\mathcal{S}\left( \left( {\bar{c}},{\bar{b}}\right) ,\bar{x}%
\right) \leq C2.
\end{equation*}
\item 
Moreover, if $\bar x$ is the  unique solution, then
\begin{equation*}
C1\leq \mathrm{clm}\mathcal{S}_{{\bar{c}}}\left( {\bar{b}},\bar{x}%
\right).
\end{equation*}
\end{itemize}
Observe that $C1$ and $C2$ translate the question of estimating the calmness
modulus of our optimal set mapping $\mathcal{S}$ into the --more exploited
in the literature-- question of computing the calmness moduli of appropriate
feasible set mappings. Specifically, one can find in the literature
different approaches and expressions applicable to the feasible set mapping
of semi-infinite linear inequality systems. In addition to the already
mentioned [16, Theorem 1], the reader is addressed to the basic
constraint qualification approach by Zheng and Ng \cite{ZN03}. An operative
matricial expression for the calmness modulus of feasible set mappings when
confined to finite linear inequality systems can be found in \cite{CLPT13}.
\begin{rem}
Both lower and upper bounds on $\mathrm{clm}\mathcal{S}\left( \left( {%
\bar{c}},{\bar{b}}\right) ,\bar{x}\right) $  constitute information about the
quantitative stability of the nominal problem.  An upper bound 
provides   a measure  of stability of the optimal solution  set, while  any positive lower bound constitutes a measure of its  instability.
 Specifically, in the case when $0<C1\leq \mathrm{clm}\mathcal{S}\left(
\left( {\bar{c}},{\bar{b}}\right) ,\bar{x}\right),$ given any
constant  $C$ such that  $0<C<C1\mathbf{,}$  and any $%
\varepsilon >0,$ as a direct consequence of the definitions, there
exist a point  $x$  and a parameter $\left( c,b\right) $
verifying that
\begin{equation*}
d\left( x,\mathcal{S}\left( {\bar{c}},{\bar{b}}\right) \right) >C~d\left(
\left( c,b\right) ,\left( {\bar{c}},{\bar{b}}\right) \right) ,\text{ }%
\left\Vert x-\bar{x}\right\Vert <\varepsilon \text{ \emph{and} }%
\left\Vert \left( c,b\right) -\left( \bar{c},\bar{b}\right)
\right\Vert <\varepsilon \text{ }.
\end{equation*}
\end{rem}
\begin{rem}
We point out that if we  know some upper bound for $\mathrm{clm}\mathcal{S%
}\left( \left( {\bar{c}},{\bar{b}}\right) ,\bar{x}\right) ,$ the
definition of  $C1$ sometimes provides an operative strategy for
computing   the aimed calmness modulus, provided that  $C1$ 
 is a lower bound. The strategy consists of finding  $D\in 
\mathcal{K}_{{\bar{b}}}\left( \bar{x}\right) $ and a sequence of
points  $\{x^{r}\}$ approaching $\bar{x},$ with  $%
f_{D}\left( x^{r}\right) >0$  for all $r$ and checking if 
$%
\lim_{r\rightarrow \infty }\frac{1}{d_{\ast }\left( {0}_{p},\partial
f_{D}\left( x^{r}\right) \right) }$  coincides with the known upper
bound. This, in fact, the argument followed in Example 1 for deriving the
exact value of $\mathrm{clm}\mathcal{S}\left( \left( {\bar{c}},{\bar{b}}%
\right) ,\bar{x}\right) $ In this example, $T$ is finite
and the known upper bound is  $C3$ (see the next paragraphs for
details about the finite case).
\end{rem} 
\vskip 2mm
When confined to the special case of ordinary linear  programming problems,
we have
\begin{itemize}
\item If $T$ is finite,   then 
\begin{equation*}
\mathrm{clm}\mathcal{S}_{{\bar{c}}}\left( {\bar{b}},\bar{x}\right) =%
\mathrm{clm}\mathcal{S}\left( \left( {\bar{c}},{\bar{b}}\right) ,\bar{x}%
\right) \leq C1.
\end{equation*}
\item Moreover,  if
$\mathcal{S}\left( {\bar{c}},{\bar{b}}\right) =\{%
\bar{x}\},$ then 
\begin{equation*}
\mathrm{clm}\mathcal{S}\left( \left( {\bar{c}},{\bar{b}}\right) ,\bar{x}%
\right) =C1\leq C3.
\end{equation*}
\end{itemize}

\begin{rem}
One can find in the literature many contributions to the stability
theory and sensitivity analysis in ordinary (finite) linear programming. As
a direct antecedent to the previous results, confined to the finite case, we
underline the classical works of W. Li (\cite{Li93}, \cite{Li94}). In fact,
the appearence of constant $C3$ reminds of the Lipschitz constant for 
$\mathcal{S}_{\bar{c}}$ introduced in \cite{Li93}. More in
detail, in relation to $\mathcal{S}_{\bar{c}},$ \cite{Li93}
analyzes the (global) Lipschitz property which (adapted to the current
notation) is satisfied by this mapping if there exists $\gamma >0$  such that%
\begin{equation}
d_{H}\left( \mathcal{S}_{\bar{c}}\left( b\right) ,\mathcal{S}_{%
\bar{c}}\left( b^{\prime }\right) \right) \leq \gamma d\left(
b,b^{\prime }\right) ,\text{ \emph{for all }}b\text{ \emph{and }}b^{\prime },
\label{eq_Li}
\end{equation}%
where  $d_{H}$ represents the Hausdorff distance. In contrast
with calmness property, this is not a local property in the sense that
parameters are allowed to run over the whole space; moreover, the distances
in the left hand side are between the whole optimal sets.  In any
case, it is obvious that any Lispchitz constant $\gamma $ verifying (%
\ref{eq_Li}) is, in particular, a calmness constant, and then an upper bound
on  $\mathrm{clm}\mathcal{S}_{{\bar{c}}}\left( {\bar{b}},\bar{x}\right)
.$ For comparative purposes with our Theorem 13, we recall here the
expression of $\gamma $ in \cite{Li93} under the assumption $%
\mathcal{S}\left( {\bar{c}},{\bar{b}}\right) =\{\bar{x}\}$ 
(yielding that the rank of the coefficient vectors in the left hand side of
the constraints is $p):$%
\begin{equation*}
\gamma =\max\limits_{\substack{ D\subset T,\text{ }\left\vert D\right\vert =p
\\ A_{D}~\text{is non-singular}}}~\left\Vert A_{D}^{-1}\right\Vert \text{.}
\end{equation*}%
 One can easily find examples in which 
\begin{equation*}
\emph{C3}<\gamma .
\end{equation*}%
 (See for instance [5, Example 4] where $ (C3$ provides the
exact Lispchitz modulus associated to the Aubin property.) In fact,  it is proven in \cite%
{Li93}  that  $\gamma $ is a Lispchitz constant for $%
\mathcal{S}_{{\bar{c}}}\left( {\bar{b}},\bar{x}\right) $, for all
$\bar{c}$ (observe that $\gamma $  does not depend on $%
\bar{c})$  and \cite[Theorem. 3.5]{Li93} establishes the sharpness of %
$\gamma $ for some $\bar{c}.$
\end{rem}

Finally, we underline an specific fruitful argument in the finite case (for
analyzing $\mathrm{clm}\mathcal{S}\left( \left( {\bar{c}},{\bar{b}}\right) ,%
\bar{x}\right) )$ which is no longer applicable when $T$ in infinite.
The argument reads as follows: if $T$ is finite, when we consider a sequence
of points $\{x^{r}\}$ converging to $\bar{x}$ and a sequence of
parameters $\{\left( c^{r},b^{r}\right) \}$ converging to $\left( \bar{c%
},\bar{b}\right) $ such that $x^{r}\in \mathcal{S}\left(
c^{r},b^{r}\right) $ (i.e., $x^{r}$ is a KKT point of $P\left(
c^{r},b^{r}\right) ,$ entailing 
\begin{equation*}
-c^{r}\in cone\{a_{t}:t\in D_{r}\},\text{ with }D_{r}\subset T_{b^{r}}\left(
x^{r}\right) )
\end{equation*}%
we always can extract a constant subsequence of $\left\{ D_{r}\right\} ;$
moreover, for $r$ large enough 
\begin{equation*}
T_{b^{r}}\left( x^{r}\right) \subset T_{\bar{b}}\left( \bar{x}%
\right) .
\end{equation*}%
Example 3 provides a situation in which $T$ is infinite and the previous
argument  is not applicable, and 
illustrates  the difficulty of the generalization of some results from finite
to semi-infinite programs. In fact, the problem of providing an exact
formula for $\mathrm{clm}\mathcal{S}\left( \left( {\bar{c}},{\bar{b}}\right)
,\bar{x}\right) ,$ perhaps under appropriate assumptions, still remains
open  and  constitutes one of the further lines of research. 
\vskip 2mm
\textbf{Acknowledgement.}
The authors would like to thank  the reviewers and the Associate Editor for the criticism and suggestions. The second and the last  author would like to thank  the universities of Alicante and Miguel Hern\'andez de Elche for hopsitality and support.
\bibliographystyle{siam}
\bibliography{myrefs+}
\end{document}